\newtheorem{theorem}{Theorem}
\newtheorem{corollary}{Corollary}
\newtheorem{lemma}{Lemma}
\newtheorem{definition}{Definition}
\newtheorem{remark}{Remark}
\newtheorem{algorithm}{Algorithm}
\newcounter{algo_line}
\begin{document}
{

\title{The rank of a divisor on a finite graph: geometry and computation}

\author{Madhusudan Manjunath \\Universit\"at des Saarlandes and Max-Planck-Institut,\\ Saarbr\"ucken, Germany
}
\maketitle

{\bf Abstract:} We study the problem of computing the rank of a divisor on a finite graph, a quantity that arises in the Riemann-Roch theory on a finite graph developed by 
Baker and Norine ({\it Advances in Mathematics}, 215({\bf 2}): 766-788, 2007). Our work consists of two parts: 
 the first part is an algorithm whose running time is polynomial for a multigraph with a fixed number of vertices. More precisely, our algorithm has running time 
$O(2^{n \log n})\text{poly}(\text{size})(G)$, where $n+1$ is the number of vertices of the graph $G$. The second part consists of a 
new proof of the fact that testing if rank of a divisor is non-negative or not is in the complexity class $NP \cap co-NP$ and 
motivated by this proof and its generalisations, we construct a new graph invariant that we call the critical automorphism group of the graph.

\section{Introduction}


The study of  chip firing games on a graph with their several variants is a classic topic 
in discrete mathematics, in particular in algebraic graph theory and this is reflected in the fact 
that standard text books on algebraic graph theory, such as the book by Godsil and Royle \cite{GodRoy01}
have chapters devoted to chip firing games. See the article of Merino \cite{Mer05} for a broad survey of the topic.


Despite their simplicity, chip firing games have connections to various other areas of mathematics and physics. 
Some examples include chip firing games studied in  dynamical systems under the name ``sandpile models'' and 
in fact, early algorithmic work on chip firing games was done independently by physicists. 
Chip firing games of graphs also play a role in counting the number of points on elliptic curves over 
finite fields and is the topic of the Phd thesis of Greg Musikar.
Recently, chip firing games have played a key role in developing analogies between graphs and  
Riemann surfaces. This line of research was initiated by Baker and Norine in their pioneering work 
``Riemann-Roch and Abel-Jacobi theory on a finite graph'' \cite{BaNo07} where they show analogues of
the Riemann-Roch theorem on a finite graph and the theorem is best explained in the language of chip-firing games.

A chip-firing game is a solitary game played on an undirected connected multigraph and is defined as follows: 
Each vertex of the graph is assigned an integer, refereed to as ``chips'' and this assignment is called the initial configuration. 
At each move of the game, an arbitrary vertex $v$ is allowed to either lend or borrow one chip along each edge incident with it and we obtain a new configuration. 
We define two configurations $C_1$ and $C_2$ to be  equivalent if $C_1$ can be reached from $C_2$ by a sequence of chip firings. 
The Laplacian matrix $Q$ of the graph naturally comes into the picture as follows:

\begin{lemma}
Configurations $C_1$ and $C_2$ are equivalent if and only if $C_1-C_2$ can be expressed as $Q\cdot w$ for some vector $w$ with integer coordinates.
\end{lemma}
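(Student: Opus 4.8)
The plan is to translate a single chip-firing move into linear algebra and then observe that the equivalence relation is exactly the one generated by such moves. Label the vertices $v_0,v_1,\dots,v_n$, let $e_v$ be the corresponding standard basis vector of $\mathbb{Z}^{n+1}$, and recall that the Laplacian is defined by $Q_{vv}=\deg(v)$ and, for $u\neq v$, $Q_{uv}=-m_{uv}$, where $m_{uv}$ is the number of edges joining $u$ and $v$. The first step is the key observation: when a vertex $v$ \emph{lends} one chip along each incident edge, the configuration $C$ becomes $C-Qe_v$. Indeed, $v$ loses $\deg(v)$ chips, which is the $v$-entry of $Qe_v$, and every other vertex $u$ gains exactly $m_{uv}$ chips, matching $-(Qe_v)_u$. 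Symmetrically, \emph{borrowing} at $v$ sends $C$ to $C+Qe_v$.

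For the forward direction, suppose $C_1$ is obtained from $C_2$ by a finite sequence of moves. By the previous paragraph each move adds to the current configuration a vector of the form $\pm Qe_v$. Summing the contributions of all the moves and letting $w_v\in\mathbb{Z}$ record the signed net number of borrow-moves minus lend-moves performed at $v$, we obtain $C_1-C_2=\sum_v w_v\,Qe_v = Qw$ with $w\in\mathbb{Z}^{n+1}$.

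For the converse, suppose $C_1-C_2=Qw$ for some integer vector $w$, so that $C_1-C_2=\sum_v w_v\,Qe_v$. Starting from $C_2$, perform for each vertex $v$ exactly $w_v$ borrow-moves at $v$ if $w_v\ge 0$, and $|w_v|$ lend-moves at $v$ if $w_v<0$. Each such move changes the configuration by adding a fixed vector $\pm Qe_v$, so the operations commute and may be carried out in any order; moreover, because configurations in this game are allowed to take arbitrary integer values, no intermediate configuration is illegal and the process never gets stuck. After all the moves the configuration is $C_2+Qw=C_1$, so $C_1$ and $C_2$ are equivalent.

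The argument is essentially bookkeeping, so I do not expect a genuine obstacle; the only points that need care are fixing the sign convention for $Q$ consistently with the words ``lending'' and ``borrowing'', and noting that permitting negative chip counts is precisely what makes the realization of $Qw$ as a move sequence unconditional. (Connectivity of $G$, which matters elsewhere in the paper, is not used here.)
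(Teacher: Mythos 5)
Your proof is correct, and it is the standard argument: a single lend at $v$ subtracts the column $Qe_v$, a borrow adds it, and the equivalence relation generated by these moves is exactly translation by the integer column span of $Q$; the converse direction works unconditionally precisely because, as you note, configurations are allowed to be negative, so any sequence of moves is legal. The paper states this lemma without proof (it is a well-known fact in the chip-firing literature), so there is nothing to compare against, but your write-up supplies exactly the bookkeeping that is being taken for granted, and your side remarks --- that the sign convention is the only thing requiring care, and that connectivity of $G$ is not used --- are both accurate.
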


We can ask some natural questions on such a game:

\begin{enumerate}
\item Is a given configuration equivalent to an effective configuration i.e., a configuration where each vertex has a non-negative number of chips?

\item More generally, given a configuration what is the minimum number of chips that must be removed from the system so that 
the resulting configuration is not equivalent to an effective configuration?
\end{enumerate}

The Riemann-Roch theorem of Baker and Norine provides insights into answering these questions. 
We need the following definitions before we can state the theorem. 

\begin{definition}({\bf Divisor})
 A configuration on a finite graph is called a divisor and is represented as an integer vector on $n+1$ coordinates.
\end{definition}

\begin{definition}({\bf Degree of a divisor})
 For a divisor $D$, the total number of chips, i.e., the sum of chips over all the vertices of the graph, 
is called the degree of the divisor $D$, denoted by $deg(D)$.
\end{definition}

\begin{definition}({\bf Rank of a divisor})\label{rank_def}
For a divisor $D$, one less than the minimum number of chips that must be removed from the divisor $D$ so that the resulting configuration is not equivalent to an 
effective divisor is called the rank of the divisor $D$, denoted by $r(D)$.
\end{definition}

\begin{theorem}({\bf Riemann-Roch theorem for graphs})\label{RieRo_theo}
For any undirected connected graph $G$, there exists a divisor $K$ called the canonical divisor such that for any divisor $D$
we have the following formula:
\begin{equation}\label{RieRo_form}
r(D)-r(K-D)=deg(D)-(g-1)
\end{equation}
where $g$ is the cyclotomic number of $G$ and is equal to $m-n+1$ where $m$ is the number of edges, $n$ is the number of vertices of $G$.
\end{theorem}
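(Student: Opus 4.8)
The plan is to reduce \eqref{RieRo_form} to a purely combinatorial dichotomy about when a divisor is equivalent to an effective one, and then read the identity off from a one-line arithmetic fact. Throughout, take the canonical divisor to be $K(v):=\deg_G(v)-2$, so that $\deg(K)=2m-2|V|=2(g-1)$.

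\textbf{Reduced divisors.} Fix a base vertex $q$ and call a divisor $R$ \emph{$q$-reduced} if $R(v)\ge 0$ for every $v\ne q$ and, for each nonempty $A\subseteq V\setminus\{q\}$, some $v\in A$ has $R(v)$ strictly smaller than the number of edges from $v$ to $V\setminus A$. Using the Laplacian description of equivalence, I would prove that every divisor $D$ is equivalent to a unique $q$-reduced divisor $D_q$: existence by repeatedly firing $V\setminus\{q\}$ toward $q$ together with an energy/termination argument, uniqueness by a discrete maximum principle for the Laplacian. The payoff is the local test
\[
r(D)\ge 0 \iff D \text{ is equivalent to an effective divisor} \iff D_q(q)\ge 0 .
\]
I would also record Dhar's burning algorithm: if $D_q(q)<0$, a fire lit at $q$ (a vertex burns as soon as the number of its already-burnt neighbours exceeds its number of chips) spreads to all of $V$ exactly because $D_q$ is $q$-reduced, and the resulting burning order $q=u_0,u_1,\dots$ satisfies $D_q(u_i)\le |\{\,j<i:\{u_i,u_j\}\in E\,\}|-1$ for every $i$.

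\textbf{The extremal family and the dichotomy.} For a total order $<$ on $V$ set $\nu_<(v):=|\{\,w<v:\{v,w\}\in E\,\}|-1$, and let $\mathcal N$ be the set of all such divisors $\nu_<$ (equivalently, $\nu_P(v)=\mathrm{indeg}_P(v)-1$ as $P$ ranges over acyclic orientations of $G$). Three observations: (i) $\deg(\nu_<)=m-|V|=g-1$, since each edge is counted once at its $<$-larger endpoint; (ii) reversing the order gives $\nu_<+\nu_{<^{\mathrm{op}}}=K$ (as $\mathrm{indeg}_P+\mathrm{outdeg}_P=\deg_G$), so $\mathcal N=K-\mathcal N$; (iii) no $\nu\in\mathcal N$ is equivalent to an effective divisor --- when $g=0$ this is a matter of degree, and in general the $q$-reduced form of $\nu$ for a source $q$ of $P$ still has value $-1$ at $q$. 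The core of the proof is the \emph{dichotomy}: for every divisor $D$, exactly one of the statements ``$D$ is equivalent to an effective divisor'' and ``$\nu-D$ is equivalent to an effective divisor for some $\nu\in\mathcal N$'' holds. Mutual exclusivity is immediate, since both together would force $\nu$ itself to be equivalent to an effective divisor, contradicting (iii). For existence, pass to $D_q$: if $D_q(q)\ge 0$ the first statement holds, and otherwise the burning order above yields $\nu\in\mathcal N$ with $\nu\ge D_q$ pointwise, whence $\nu-D\sim\nu-D_q\ge 0$. I expect the existence half of this dichotomy, together with the reverse inequality in the rank formula below, to be the main obstacle, as both rest on a careful analysis of how burning interacts with a $q$-reduced configuration.

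\textbf{Rank formula and conclusion.} Writing $\deg_+(F):=\sum_{v}\max(F(v),0)$, I would upgrade the dichotomy to
\[
r(D)\;=\;-1+\min_{\nu\in\mathcal N}\deg_+(D-\nu).
\]
The bound ``$\le$'' is easy: for any $\nu$, the divisor $E:=(D-\nu)^+$ is effective of degree $\deg_+(D-\nu)$, and $\nu-(D-E)=(\nu-D)^+\ge 0$, so by the dichotomy $D-E$ is not equivalent to an effective divisor and hence $r(D)\le\deg(E)-1$. The bound ``$\ge$'' is the delicate half: it amounts to showing that if $E$ is effective with $\deg(E)<\deg_+(D-\nu)$ for every $\nu\in\mathcal N$, then $D-E$ is still equivalent to an effective divisor, which I would again approach via the $q$-reduced representative of $D-E$ and the burning construction. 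Granting the formula, \eqref{RieRo_form} is pure arithmetic: reindexing the minimum for $K-D$ through $\nu\mapsto K-\nu$ (permissible because $\mathcal N=K-\mathcal N$) turns $\deg_+((K-D)-\nu)$ into $\deg_+(\nu-D)$, while for each fixed $\nu$ the coordinatewise identity $\max(x,0)-\max(-x,0)=x$ gives $\deg_+(D-\nu)-\deg_+(\nu-D)=\deg(D)-\deg(\nu)=\deg(D)-(g-1)$. Hence $\min_{\nu}\deg_+(D-\nu)=\min_{\nu}\deg_+(\nu-D)+\deg(D)-(g-1)$, and subtracting the two instances of the rank formula yields $r(D)-r(K-D)=\deg(D)-(g-1)$, as required.
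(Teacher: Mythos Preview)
The paper does not supply its own proof of Theorem~\ref{RieRo_theo}: it is quoted as the main result of Baker--Norine \cite{BaNo07} and used thereafter as a tool, and the only related ingredient the paper actually discusses is the rank formula (Theorem~\ref{rankform_theo}), again cited rather than proved. Your proposal is essentially a reconstruction of the original Baker--Norine argument --- $q$-reduced divisors, Dhar's burning procedure, the family $\mathcal N$ of acyclic-orientation divisors, the dichotomy, the rank formula, and the closing symmetry $\mathcal N=K-\mathcal N$ together with $\deg_+(x)-\deg_+(-x)=\deg(x)$ --- so in spirit it matches exactly what the paper is appealing to, and the overall strategy is correct.

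One point that needs tightening: as written, your rank formula minimises only over $\mathcal N=\{\nu_<\}$, but what the dichotomy/burning argument actually delivers is a bound against $\min_{\nu\in\mathcal N+L_G}\deg_+(D-\nu)$. In the ``$\ge$'' half you pass from $D-E$ to its $q$-reduced form $(D-E)_q$, obtain $\nu_<\ge (D-E)_q$ from burning, and hence $E\ge D-\nu_<-q''$ where $q''\in L_G$ is the lattice element witnessing $(D-E)_q\sim D-E$; the conclusion is $\deg(E)\ge\deg_+\bigl(D-(\nu_<+q'')\bigr)$, and $\nu_<+q''$ lies in $\mathcal N+L_G$, not necessarily in $\mathcal N$. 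This is precisely why the paper's Theorem~\ref{rankform_theo} minimises over $\mathrm{Ext}(L_G)$, the full lattice orbit of the $\nu_\pi$. The fix is painless --- replace $\mathcal N$ by $\mathcal N+L_G$ throughout --- and your final arithmetic goes through unchanged, since $L_G=-L_G$ keeps $\mathcal N+L_G$ stable under $\nu\mapsto K-\nu$.
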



\subsection{Related work and a brief description of our work}

The central quantity in the Riemann-Roch theorem is the rank of a divisor and the efficient computation of rank is a natural problem, attributed to 
Hendrik Lenstra (see \cite{HlaKraNor10}). In fact this work, gave an algorithm, i.e., a procedure that terminates in a finite number of steps to compute the rank of a divisor 
on a tropical curve. But the algorithm does not run in polynomial time in the size of the multigraph even when the number of vertices are fixed since the algorithm involves iterating over all the spanning trees in the graph (see Proof of Theorem 23 
in \cite{HlaKraNor10}) and the number of spanning trees in indeed not polynomially bounded in the size of the mutligraph even if the number of vertices are fixed. 
On the other hand, there are polynomial time algorithms for deciding if the rank of a divisor on a finite multigraph is non-negative, see \cite{Dha93},~\cite{Tar88} and \cite{Sho09}.

Our paper is centered around the problem of computing the rank. More precisely, (Section \ref{geominter_theo}) we obtain an algorithm whose running time is polynomial for a multigraph with a fixed number of vertices. 
More precisely, our algorithm has running time $2^{O(n \log n)}\text{poly}(\text{size})(G)$, where $n+1$ is the number of vertices of the multigraph $G$.
Recall that we are working with arbitrary undirected connected multigraphs or equivalently graphs with positive integer weights on the edges 
and indeed, the original Riemann-Roch theory was also developed in this setting. The main tools involved are the Riemann-Roch formula and a formula for rank 
(Theorem \ref{rankform_theo}) that is used in the proof of the Riemann-Roch formula, these results were first obtained in the work of 
Baker and Norine \cite{BaNo07}. We obtain a geometric interpretation of rank (Theorem \ref{geominter_theo}) and
combine this geometric interpretation along with algorithms from the geometry of numbers to 
obtain the algorithm for computing the rank (Algorithm \ref{mainalgo}).  We find it satisfying that geometric tools seem to be essential in obtaining the algorithm, 
though the definition of rank of a divisor can be stated in purely combinatorial terms.


The second part (Section \ref{dual_sect}) starts with a new proof of the fact that testing if $r(D) \geq 0$ is in $NP \cap co-NP$. 
A duality theorem  characterising divisors with $r(D) \geq 0$ plays a key role in our proof. Motivated by the observation that generalisations of the duality theorem 
lead to more general complexity results on computing the rank,  we generalise the duality theorem and this generalisation leads to the construction of a new graph invariant that 
we call the {\bf critical automorphism group} of the graph.

\section{Preliminaries}

We will now describe mainly geometric notions and results that we frequently use in the rest of our paper.

\subsection{Lattices} \label{Lat_Sect}

A  {\bf lattice} $L$ is a discrete subgroup of the Euclidean vector space $\mathbb{R}^{n}$. More concretely,  a lattice is the Abelian group obtained by taking all the integral combinations of a set of linearly independent vectors $b_1, \dots,b_k$ in $\mathbb{R}^{n}$. More precisely,
 
\begin{equation}L=\{ \sum_{i=1}^{k} \alpha_i b_i|~\alpha_i \in \mathbb{Z}\} \end{equation}

The set $\mathcal{B}=\{ b_1, \dots,b_k \}$ is called a basis of $L$ and the integer $k$ is called the dimension of $L$, denoted by $dim(L)$, is independent of the choice of the basis. We denote the subspace spanned by the elements of $\mathcal{B}$ by $Span(L)$.

Let us now look at the important geometric invariants of a lattice. The volume of the lattice $Vol(L)$, also known as the discriminant or determinant, is defined as $\sqrt{det(\mathcal{B}\mathcal{B}^{t})}$ where the basis $\mathcal{B}$ is represented as a matrix with its elements row-wise and hence, $\mathcal{B}\mathcal{B}^{t}$ is the Gram matrix of the basis elements. 
Another important invariant is the norm of a shortest vector of a lattice.

\begin{definition}
A shortest vector of $L$ in the Euclidean norm is an element $q$ of $L$ such that $q \cdot q \leq q' \cdot q'$
for all non-zero elements $q'$ in $L$ and we denote $||q||_2$ as $\nu_E(L)$. 
\end{definition}







\subsection{The Laplacian Lattice of a Graph}

For an undirected connected graph $G$, the Laplacian matrix $Q(G)$ is defined as $D(G)-A(G)$ where $D(G)$
is the diagonal matrix with the degree of every vertex in its diagonal and $A(G)$ is the vertex-adjacency matrix of the graph. 
We assume the following standard form of the Laplacian matrix:

\begin{equation}\label{gra_form}
Q=
\begin{bmatrix}
  \delta_0      & -b_{01} & -b_{02}  \hdots & -b_{0n} \\
 -b_{10}   &  \delta_1    & -b_{12}  \hdots & -b_{1n} \\
 \vdots    &  \vdots & \ddots\\
 -b_{n0}   &  -b_{n1} & -b_{n2}  \hdots &  \delta_n
\end{bmatrix}
\end{equation}
has the following properties:
\begin{itemize}
\item[$(C_1)$] $b_{ij}$'s are integers, $b_{ij}\geq0$ for all $0\leq i \neq j\leq n$ and $b_{ij}=b_{ji},\:\:\forall i\neq j$.
\item[$(C_2)$] $\delta_i=\sum_{j=1,j\neq i}^{n}b_{ij}=\sum_{j=1,j\neq i}^{n}b_{ji}$ (and is the degree of the $i$-th vertex).
\end{itemize}

Though the Laplacian matrix of a graph contains essentially the same information as the adjacency matrix of a graph, it enjoys other nice properties. See, for example, the chapter ``The Laplacian matrix of a Graph'' in the algebraic graph theory book of Godsil and Royle \cite{GodRoy01} for a more complete discussion.

\begin{lemma}The Laplacian matrix $Q(G)$ is a symmetric positive semi-definite matrix.\end{lemma}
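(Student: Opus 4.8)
The plan is to verify the two assertions separately, both directly from the structural conditions $(C_1)$ and $(C_2)$ imposed on the matrix in \eqref{gra_form}. Symmetry is immediate: the entry of $Q$ in off-diagonal position $(i,j)$ is $-b_{ij}$, and condition $(C_1)$ gives $b_{ij}=b_{ji}$, while the diagonal entries $\delta_i$ trivially agree with themselves; hence $Q=Q^t$.

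For positive semi-definiteness I would compute the associated quadratic form explicitly. Fix an arbitrary vector $x=(x_0,\dots,x_n)^t\in\RR^{n+1}$ and expand
\[
x^tQx=\sum_{i=0}^{n}\delta_i x_i^2-\sum_{0\le i\neq j\le n}b_{ij}x_ix_j.
\]
Substituting $\delta_i=\sum_{j\neq i}b_{ij}$ from $(C_2)$ into the first sum and using $b_{ij}=b_{ji}$ to pair up the cross terms, this rearranges into a sum over unordered pairs:
\[
x^tQx=\sum_{0\le i<j\le n}b_{ij}\bigl(x_i^2-2x_ix_j+x_j^2\bigr)=\sum_{0\le i<j\le n}b_{ij}(x_i-x_j)^2.
\]
Since every $b_{ij}\ge 0$ by $(C_1)$ and each square is non-negative, the right-hand side is non-negative, which is exactly the statement that $Q$ is positive semi-definite.

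Alternatively, one may exhibit an explicit factorization $Q=NN^t$, where $N$ is the $(n+1)\times m$ incidence matrix obtained by orienting each of the $m$ edges of $G$ arbitrarily and setting the $(v,e)$ entry to $+1$ if $e$ leaves $v$, to $-1$ if $e$ enters $v$, and to $0$ otherwise; a direct check against $(C_1)$ and $(C_2)$ shows that the diagonal of $NN^t$ is $(\delta_0,\dots,\delta_n)$ and its $(i,j)$ entry is $-b_{ij}$, so $NN^t=Q$. Then $x^tQx=\|N^tx\|_2^2\ge 0$ for every $x$, which gives the claim and, as a bonus, makes transparent that for a connected $G$ the kernel of $Q$ is precisely the span of the all-ones vector. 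The only point requiring any care is the index bookkeeping in the rearrangement of the double sum (correctly splitting the ordered sum into unordered pairs and matching it against the diagonal contribution); there is no genuine obstacle here.
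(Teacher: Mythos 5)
Your proof is correct. The paper itself states this lemma without proof, treating it as a standard fact (deferring to Godsil--Royle for discussion), so there is no in-paper argument to compare against; both of your routes --- the rearrangement of the quadratic form into $\sum_{i<j}b_{ij}(x_i-x_j)^2$ and the incidence-matrix factorization $Q=NN^t$ --- are the standard proofs and are carried out correctly, including the handling of the multigraph case where $b_{ij}$ counts edge multiplicities.
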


Another remarkable property of the Laplacian matrix is described in the Matrix-Tree theorem:

\begin{theorem} {\bf (Kirchoff's Matrix-Tree Theorem)}The absolute of value of any cofactor is equal to the number of spanning trees of the graph.\end{theorem}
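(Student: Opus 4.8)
The plan is to reduce the claim to the Cauchy--Binet formula applied to the oriented incidence matrix of $G$. First I would fix an arbitrary orientation of $G$ and form the incidence matrix $B \in \{-1,0,1\}^{(n+1)\times m}$, where the column of an edge $e=(u,v)$ carries a $+1$ in row $v$, a $-1$ in row $u$, and $0$ elsewhere; parallel edges simply produce repeated columns up to sign. The basic observation is the identity $Q = BB^{t}$, which one checks entrywise against $(C_1)$ and $(C_2)$: the diagonal entry of $BB^{t}$ at a vertex $v$ counts the edges incident to $v$, namely $\delta_v$, while the $(i,j)$ entry is $-1$ times the number of edges joining $i$ and $j$, namely $-b_{ij}$.

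Next I would handle the diagonal cofactors. Deleting row and column $i$ from $Q$ yields the reduced Laplacian $\tilde Q_i = \tilde B_i \tilde B_i^{t}$, where $\tilde B_i$ is $B$ with row $i$ removed, an $n\times m$ matrix. By Cauchy--Binet, $\det \tilde Q_i = \sum_{S}\bigl(\det \tilde B_{i,S}\bigr)^{2}$, the sum ranging over all $n$-element subsets $S$ of the edge set and $\tilde B_{i,S}$ denoting the corresponding $n\times n$ submatrix. The combinatorial heart is the claim that $\det \tilde B_{i,S} = \pm 1$ when the edges of $S$ form a spanning tree of $G$ and $\det \tilde B_{i,S} = 0$ otherwise. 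In the ``otherwise'' case, a set of $n$ edges on $n+1$ vertices that is not a spanning tree must contain a cycle, and signing the columns of $B$ around that cycle produces a linear dependence, so even the $(n+1)\times n$ matrix $B_S$ — and hence $\tilde B_{i,S}$ — is rank-deficient. The spanning-tree case I would prove by induction on $n$: a tree on at least two vertices has a leaf $w\neq i$; the row of $\tilde B_{i,S}$ indexed by $w$ has a single nonzero entry $\pm 1$; expanding the determinant along that row removes $w$ together with its pendant edge and leaves the reduced incidence matrix of a spanning tree on $n$ vertices, the induction closing with the empty determinant equal to $1$. Combining, $\det \tilde Q_i$ equals the number of spanning trees, which is Kirchhoff's statement for diagonal cofactors.

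To upgrade from diagonal cofactors to \emph{any} cofactor I would invoke the adjugate. Since $Q\mathbf{1}=0$ we have $\det Q=0$, hence $Q\cdot\mathrm{adj}(Q) = \mathrm{adj}(Q)\cdot Q = 0$; so every column of $\mathrm{adj}(Q)$ lies in $\ker Q = \mathbb{R}\mathbf{1}$, and since $Q^{t}=Q$ every row lies in $\mathbb{R}\mathbf{1}^{t}$. Therefore $\mathrm{adj}(Q) = c\,\mathbf{1}\mathbf{1}^{t}$ for a single scalar $c$, i.e.\ all cofactors $(-1)^{i+j}M_{ij}$ of $Q$ coincide. Specializing to a diagonal entry and using the previous paragraph gives $c = \#\{\text{spanning trees}\}\geq 0$, so every cofactor has this common value and in particular the absolute value of any cofactor equals the number of spanning trees.

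The main obstacle I expect is the clean verification of the spanning-tree determinant claim: making the induction on leaves precise (ensuring a leaf distinct from the deleted vertex exists, and checking that the resulting minor really is the reduced incidence matrix of the smaller tree), together with the cycle-dependence argument in the multigraph setting where $S$ may reuse several parallel edges. The remaining ingredients — the identity $Q=BB^{t}$, Cauchy--Binet, and the adjugate bookkeeping — are routine.
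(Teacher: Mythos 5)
The paper states Kirchhoff's theorem as a classical background fact and offers no proof of its own, so there is nothing to compare against; your argument is the standard (and correct) one via $Q=BB^{t}$, Cauchy--Binet on the reduced incidence matrix, and the adjugate identity to propagate from diagonal to arbitrary cofactors. The only point worth making explicit is that the step $\ker Q=\mathbb{R}\mathbf{1}$ uses connectivity of $G$ (which the paper does assume); for a disconnected graph all cofactors vanish and the kernel is larger, so the adjugate argument would not pin down a single scalar $c$ without that hypothesis.
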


A remarkable aspect of the matrix-tree theorem is that it reduces counting the number of spanning trees of 
a graph into a determinant computation and hence provides a polynomial time algorithm for it.

\begin{definition}{\bf (The Laplacian lattice of a graph)}
Given the Laplacian matrix $Q(G)$, the lattice generated by the rows (or equivalently the columns) of $Q(G)$ is called  the Laplacian lattice $L_G$ of the graph. 
\end{definition}

In the language of chip firing games, the Laplacian lattice is the set of all divisors that are equivalent to the divisor $(0,\dots,0)$.

\begin{definition}{\bf (The Hyperplanes $H_k$)}
For a fixed real number $k$,  we denote $n$-dimensional hyperplane  $\{(x_0,\dots,x_n)|~ \sum_{i=0}^{n}x_i=k\}$ by $H_k$. 
\end{definition}

\begin{definition}{\bf (The Root Lattice $A_n$)}
The root lattice $A_n$ is the lattice of integer points in the hyperplane $H_0=\{(x_0,\dots,x_n)|~ \sum_{i=0}^{n+1}x_i=0,~x_i \in \mathbb{R}\}=(1,\dots,1)^{\perp}$.
More precisely, 
\begin{equation}\notag A_n=\{(x_0,\dots,x_n)|~ \sum_{i=0}^{n+1}x_i=0,~x_i \in \mathbb{Z}\}.\end{equation}
\end{definition}

\begin{remark}
The name ``root lattice'' is derived from the fact that the lattice $A_n$ is generated by a root system i.e. 
a set of vectors that satisfy reflection symmetries. See pages 96--98 of Conway and Sloane  \cite{ConSlo99}
for a precise definition of a root system and for a discussion on root lattices. In the case of 
$A_n$, the corresponding root system is $e_i-e_j$ where $e_i$ and $e_j$ run over the standard basis of $\mathbb{R}^{n+1}$.
\end{remark}

We now make a few simple observations on the Laplacian lattice of a graph.

\begin{lemma} The Laplacian lattice of a graph on $n+1$-vertices is a sublattice of the root lattice $A_n$.\end{lemma}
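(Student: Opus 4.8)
The plan is to verify two things: first, that every lattice vector of $L_G$ actually lies in the ambient space of $A_n$ (i.e., in the hyperplane $H_0$ cut out by $\sum x_i = 0$, with integer coordinates), and second, that $L_G$ is closed under the group operations inherited from $A_n$, which is automatic since $L_G$ is itself a lattice (a subgroup of $\mathbb{R}^{n+1}$). So the real content is just the containment $L_G \subseteq A_n$ as sets, after which "sublattice" follows because a subgroup of a lattice that is contained in another lattice is a sublattice of the latter.

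First I would recall that $L_G$ is generated over $\mathbb{Z}$ by the rows of the Laplacian matrix $Q$ in the standard form \eqref{gra_form}. A generic element of $L_G$ is therefore $Q \cdot w$ for some integer vector $w$ (using that $Q$ is symmetric, rows and columns generate the same lattice). Since the entries $b_{ij}$ are integers by property $(C_1)$ and $\delta_i$ is an integer sum of them by $(C_2)$, every entry of $Q$ is an integer, hence every coordinate of $Q \cdot w$ is an integer. This gives $L_G \subseteq \mathbb{Z}^{n+1}$.

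Next I would check the degree-zero condition. The key algebraic fact is that each row of $Q$ sums to zero: for row $i$, the diagonal entry is $\delta_i$ and the off-diagonal entries are $-b_{ij}$ for $j \neq i$, so the row sum is $\delta_i - \sum_{j \neq i} b_{ij} = 0$ by $(C_2)$. Equivalently, $Q \cdot (1,\dots,1)^{t} = 0$, i.e., the all-ones vector is in the kernel of $Q$. Since $Q$ is symmetric, this says every row of $Q$ is orthogonal to $(1,\dots,1)$, hence lies in $H_0 = (1,\dots,1)^{\perp}$. Taking integer combinations preserves this: for any integer vector $w$, the coordinate sum of $Q \cdot w$ equals $(1,\dots,1) \cdot (Q \cdot w) = (Q \cdot (1,\dots,1)^{t}) \cdot w = 0$. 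Combined with the previous paragraph, every element of $L_G$ is an integer vector with coordinate sum zero, so $L_G \subseteq A_n$.

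I do not anticipate a genuine obstacle here; the only point requiring a word of care is the logical step from "$L_G \subseteq A_n$ as subsets of $\mathbb{R}^{n+1}$" to "$L_G$ is a sublattice of $A_n$" — one should note that $L_G$ is discrete and closed under addition and negation (being a lattice in its own right), so it is in particular a subgroup of $A_n$, and a subgroup of a lattice is again a lattice; thus $L_G$ is a sublattice of $A_n$. A secondary remark worth including is that $L_G$ has full rank $n$ inside $A_n$ (since $Q$ has rank $n$ for a connected graph, its kernel being spanned by $(1,\dots,1)$), so the inclusion is of finite index — in fact the index equals the number of spanning trees by Kirchhoff's Matrix-Tree Theorem — but this finer statement is not needed for the lemma as stated.
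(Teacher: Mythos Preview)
Your argument is correct and is the natural one: each row of $Q$ has integer entries and row-sum zero by properties $(C_1)$ and $(C_2)$, so every integer combination of the rows lies in $A_n$; since $L_G$ is itself a lattice, it is then a sublattice of $A_n$. The paper in fact states this lemma without proof, treating it as an immediate observation, so there is nothing further to compare against.
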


\begin{definition}{\bf (The covolume of a sublattice)}
A full-dimensional sublattice $L_s$ of a lattice $L$ is a subgroup of the Abelian group $L$ 
the cardinality of the quotient group $L/L_s$ is called the covolume of $L_s$ with respect to $L$. \end{definition}

\begin{lemma}\label{covol_lem}\cite{Sho09} The covolume of the Laplacian lattice of $G$ with respect to $A_n$ is equal 
to the number of spanning trees of $G$.\end{lemma}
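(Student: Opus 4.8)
The plan is to identify the quotient $A_n/L_G$ with the cokernel of the reduced Laplacian and then apply Kirchhoff's Matrix-Tree Theorem. First I would note that $L_G \subseteq A_n$ by the preceding lemma, and that $L_G$ is full-dimensional inside $A_n$: the Laplacian of a connected graph has rank $n$, so the rows of $Q$ span the whole hyperplane $H_0 = Span(A_n)$. Hence $A_n/L_G$ is a finite abelian group, and the covolume of $L_G$ with respect to $A_n$ is precisely its order, which is what we must compute.

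Next I would use the coordinate projection $\pi \colon \mathbb{R}^{n+1} \to \mathbb{R}^n$, $(x_0, x_1, \dots, x_n) \mapsto (x_1, \dots, x_n)$. Every element of $A_n$ satisfies $x_0 = -(x_1 + \dots + x_n)$, so $\pi$ restricts to a group isomorphism $A_n \xrightarrow{\sim} \mathbb{Z}^n$. Let $\tilde{Q}$ denote the reduced Laplacian, obtained from $Q$ by deleting row $0$ and column $0$. The rows $q_1, \dots, q_n$ of $Q$ (those indexed $1, \dots, n$) map under $\pi$ exactly to the rows of $\tilde{Q}$. The remaining generator $q_0$ maps to $\pi(q_0)$, and since every column of $Q$ sums to zero we have $Q_{0j} = -\sum_{i=1}^n Q_{ij}$ for each $j \geq 1$; thus $\pi(q_0) = -\sum_{i=1}^n \pi(q_i)$ already lies in the $\mathbb{Z}$-span of the rows of $\tilde{Q}$. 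Consequently $\pi(L_G)$ is precisely the lattice $\tilde{Q}\,\mathbb{Z}^n$ generated by the rows (equivalently the columns) of $\tilde{Q}$.

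Combining these two observations gives $A_n/L_G \cong \mathbb{Z}^n/\tilde{Q}\,\mathbb{Z}^n$. Since $\tilde{Q}$ is an integer matrix of nonzero determinant (again because $G$ is connected), its Smith normal form shows that this cokernel has order $|\det \tilde{Q}|$. Finally, $\det \tilde{Q}$ is the $(0,0)$-cofactor of $Q$, so by Kirchhoff's Matrix-Tree Theorem $|\det \tilde{Q}|$ equals the number of spanning trees of $G$, completing the argument.

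The only genuinely delicate point — and the step I would be most careful about — is verifying that $\pi$ does not lose a generator of $L_G$, i.e. that the discarded row $q_0$ of $Q$ is an integer combination of the $\pi$-images of the retained rows; this is exactly where the column-sum-zero property of the Laplacian is used. As an alternative that avoids the cokernel language, one could compare covolumes directly: $[A_n : L_G] = Vol(L_G)/Vol(A_n)$, with $Vol(A_n) = \sqrt{n+1}$ computed from the Gram matrix $I + \mathbf{1}\mathbf{1}^{t}$ of the basis $\{e_i - e_0\}_{i=1}^n$ of $A_n$, while $Vol(L_G) = \sqrt{\det(BB^{t})}$ for the $n \times (n+1)$ matrix $B$ with rows $q_1, \dots, q_n$; writing $B = [\,-\tilde{Q}\mathbf{1} \mid \tilde{Q}\,]$ yields $BB^{t} = \tilde{Q}(I + \mathbf{1}\mathbf{1}^{t})\tilde{Q}$, hence $\det(BB^{t}) = (n+1)(\det \tilde{Q})^2$, and the ratio is again $|\det \tilde{Q}|$, the number of spanning trees of $G$.
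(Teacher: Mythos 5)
Your argument is correct and complete: the identification of $A_n/L_G$ with the cokernel $\mathbb{Z}^n/\tilde{Q}\,\mathbb{Z}^n$ of the reduced Laplacian (using that the deleted row $q_0$ is recovered from the column-sum-zero property), followed by Smith normal form and Kirchhoff's theorem, is exactly the standard proof, and it is the one found in the reference \cite{Sho09} that the paper cites in lieu of a proof. The alternative volume-ratio computation via $\det(BB^{t})=(n+1)(\det\tilde{Q})^2$ also checks out, so nothing further is needed.
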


The elements of $A_n/L_G$ naturally possess an Abelian group structure and this group is known as the Picard group of $G$, also known as the Jacobian of $G$. A number of works have been devoted to the study of the structure of this group and 
the information that it contains about the underlying graph, see for example the works of Biggs \cite{Big97}, Kotani and Sunada \cite{KotSun98} and Lorenzini \cite{Lor08}. 
As a straightforward corollary to Lemma \ref{covol_lem} we obtain:

\begin{corollary}
The cardinality of the Picard group of $G$ is equal to the number of spanning trees of $G$.
\end{corollary}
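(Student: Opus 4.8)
The plan is to obtain the corollary simply by unwinding definitions and then invoking Lemma~\ref{covol_lem}. By construction the Picard group of $G$ is the quotient group $A_n/L_G$, so its cardinality is precisely $|A_n/L_G|$. By the definition of the covolume of a sublattice given above, whenever $L_G$ is a full-dimensional sublattice of $A_n$ this cardinality $|A_n/L_G|$ is by definition the covolume of $L_G$ with respect to $A_n$. Lemma~\ref{covol_lem} then identifies this covolume with the number of spanning trees of $G$, which is exactly the claimed equality.

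The one point that warrants a line of justification is that $L_G$ is indeed a full-dimensional sublattice of $A_n$, so that the covolume is well defined (equivalently, that $A_n/L_G$ is finite). For this I would recall that $Q(G)$ is the Laplacian of a connected graph, so its kernel is spanned by the all-ones vector $(1,\dots,1)$ and hence $Q(G)$ has rank $n$; its rows therefore span a rank-$n$ sublattice of $\mathbb{R}^{n+1}$. Since every row of $Q$ sums to $0$ by property $(C_2)$, those rows lie in the hyperplane $H_0$, so $L_G$ is a rank-$n$ sublattice of the rank-$n$ lattice $A_n$, i.e.\ full-dimensional inside it. This makes $A_n/L_G$ finite and the covolume meaningful, and we already observed earlier in the excerpt that $L_G \subseteq A_n$.

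I expect no real obstacle here: the mathematical content sits entirely in Lemma~\ref{covol_lem} (which in turn rests on Kirchhoff's Matrix-Tree Theorem), and the corollary is just its restatement in the language of the Picard group $A_n/L_G$.
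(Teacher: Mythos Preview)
Your proposal is correct and matches the paper's approach exactly: the paper presents this corollary as a straightforward consequence of Lemma~\ref{covol_lem}, with no further proof given. Your added verification that $L_G$ is full-dimensional in $A_n$ is a reasonable bit of care, but the core argument---identify $|\mathrm{Pic}(G)|=|A_n/L_G|$ with the covolume and invoke Lemma~\ref{covol_lem}---is precisely what the paper intends.
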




\subsection{Algorithmic Geometry of Numbers}

We will now briefly discuss some important algorithmic problems related to lattices and geometry of numbers, in particular 
those that we employ in our algorithm. Two important and closely related problems in the algorithmic geometry of numbers are:

\begin{itemize}

\item {\bf Closest vector problem (CVP)}: Given a lattice $L$ by an arbitrary basis and a target vector $v$, find a lattice that is closer to $v$ than to any other lattice point in the $\ell_2$-norm.

\item {\bf Shortest vector problem (SVP)}: Given a lattice $L$, by an arbitrary basis find a non-zero lattice point that has the smallest $\ell_2$-norm. 
\end{itemize}

Indeed, CVP and SVP have versions with respect to the other $\ell_p$-norms. For the $\ell_2$-norm both CVP and SVP are 
known to be NP-hard and in fact SVP has a polynomial time reduction to CVP in any given norm but the converse is not known.
A  problem that is closely related to CVP is the integer programming problem:

 {\bf Integer programming problem}: Given a polyhedron $\mathcal{P}$ in the form $A \cdot x \leq b$ where $A$ is an $m \times n$ matrix and $b$ is a vector in $\mathbb{R}^m$.
 Decide if $\mathcal{P}$ has an integer point or not i.e., $\mathcal{P} \cap \mathbb{Z}^n = \emptyset$ or not.


Indeed, the integer programming problem is also known to be NP-hard. In the 1980's there was a great amount of progress in algorithmic geometry of numbers, trigerred by
 the algorithm of Lenstra that solves the integer programming problem in polynomial time for a fixed dimension\cite{Len83}. 
Lenstra's algorithm has running time  $2^{O(n^3))}\cdot poly(|I|)$, where $|I|$ is the size of the input. The factor $2^{O(n^3)}$ was subsequently improved 
and the current best factor being $2^{O(n \log n)}$ by Kannan \cite{Kan87}. Note that the Kannan's algorithm also works when the polytope is presented as a separation oracle
and in fact, the algorithm works also for general convex bodies presented in terms of a separation oracle, see the remark ``General convex bodies and mixed integer programs'' in \cite{Kan87}. 
We will crucially use Kannan's algorithm and the polytope will be presented to Kannan's algorithm in terms of a separation oracle.



\subsection{Polyhedral Distance Functions}\label{polydistfunc_sect}

 Let $\mathcal{P}$ be a convex polytope in $\mathbb R^n$ with the reference point $O=(0,\dots,0)$ that we call the ``center'' in its interior. By $\mathcal{P}(p,\lambda)$ we denote a dilation of $\mathcal{P}$ by a factor $\lambda$ and its center translated to the point $p$ i.e. $\mathcal{P}(p,\lambda)=p+\lambda.\mathcal{P}$ and $\lambda.\mathcal{P}\:=\:\{\:\lambda.x\:|\:x\in \mathcal{P}\:\}$. 
We define the $\mathcal{P}$-midpoint of two points $p$ and $q$ in $\mathbb{R}^{n}$ as $\inf\{R|~\mathcal{P}(p,R) \cap \mathcal{P}(q,R) \neq \emptyset\}$. 
The {\it polyhedral distance function} $d_{\mathcal{P}}(.\:,.)$ between the points of $\mathbb R^n$ is defined as follows:
\[\forall\: p,q\in \mathbb R^n,\: d_{\mathcal{P}}(p,q)\::=\:\inf\{\lambda\geq 0\:|\:q \in \mathcal{P}(p,\lambda)\}.\]
$d_{\mathcal{P}}$ is not generally symmetric, indeed it is easy to check that $d_{\mathcal{P}}(.\:,.)$ is symmetric if and 
only if the polyhedron $\mathcal{P}$ is centrally symmetric i.e. $\mathcal{P}=-\mathcal{P}$. Nevertheless $d_{\mathcal{P}}(.\:,.)$ satisfies the triangle inequality. 

\begin{lemma}\label{lin_lem}
For every three points $p,q,r\in \mathbb R^{n}$, we have $d_{\mathcal{P}}(p,q)+d_{\mathcal{P}}(q,r) \geq d_{\mathcal{P}}(p,r)$.
 In addition, if $q$ is a convex combination of $p$ and $r$, then $d_{\mathcal{P}}(p,q)+d_{\mathcal{P}}(q,r)=d_{\mathcal{P}}(p,r)$.
\end{lemma}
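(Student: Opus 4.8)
The plan is to reduce everything to the definition of $d_{\mathcal P}$ via Minkowski dilations and to exploit that $\mathcal P$ is convex with the center $O$ in its interior. First I would record the basic identity $d_{\mathcal P}(p,q)=\inf\{\lambda\ge 0\mid q-p\in\lambda\mathcal P\}$, so that $d_{\mathcal P}$ is translation invariant and depends only on the difference $q-p$ through the gauge (Minkowski functional) $g_{\mathcal P}(x)=\inf\{\lambda\ge 0\mid x\in\lambda\mathcal P\}$; note $g_{\mathcal P}$ is finite everywhere (since $O\in\mathrm{int}\,\mathcal P$) and positively homogeneous. Then the triangle inequality $d_{\mathcal P}(p,q)+d_{\mathcal P}(q,r)\ge d_{\mathcal P}(p,r)$ is exactly subadditivity of the gauge: if $q-p\in\lambda\mathcal P$ and $r-q\in\mu\mathcal P$, then $r-p=(q-p)+(r-q)\in\lambda\mathcal P+\mu\mathcal P=(\lambda+\mu)\mathcal P$, where the last equality is convexity of $\mathcal P$ (the inclusion $\lambda\mathcal P+\mu\mathcal P\subseteq(\lambda+\mu)\mathcal P$ follows by writing a sum as a convex combination after scaling). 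Taking infima over $\lambda,\mu$ gives the inequality.

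For the second assertion, suppose $q=(1-t)p+t r$ with $t\in[0,1]$, so $q-p=t(r-p)$ and $r-q=(1-t)(r-p)$. By positive homogeneity of the gauge, $d_{\mathcal P}(p,q)=g_{\mathcal P}(t(r-p))=t\,g_{\mathcal P}(r-p)$ and similarly $d_{\mathcal P}(q,r)=(1-t)\,g_{\mathcal P}(r-p)$, and these sum to $g_{\mathcal P}(r-p)=d_{\mathcal P}(p,r)$. (The degenerate cases $t\in\{0,1\}$ are trivial since then $q$ coincides with $p$ or $r$ and $d_{\mathcal P}(x,x)=0$.)

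The only genuinely delicate point is the set identity $\lambda\mathcal P+\mu\mathcal P=(\lambda+\mu)\mathcal P$ for $\lambda,\mu\ge 0$, which I would prove carefully: the inclusion $\supseteq$ is immediate, and for $\subseteq$, given $x\in\lambda\mathcal P$ and $y\in\mu\mathcal P$ with $\lambda+\mu>0$, write $x+y=(\lambda+\mu)\big(\tfrac{\lambda}{\lambda+\mu}\tfrac{x}{\lambda}+\tfrac{\mu}{\lambda+\mu}\tfrac{y}{\mu}\big)$, a convex combination of two points of $\mathcal P$, hence in $(\lambda+\mu)\mathcal P$ by convexity; the case $\lambda=\mu=0$ is trivial since then $0\in\mathcal P$ forces $x=y=0$. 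One should also check that the infimum in the definition of $d_{\mathcal P}$ is attained (so that ``$q\in\mathcal P(p,\lambda)$'' can be used with the infimal $\lambda$), which follows from $\mathcal P$ being closed and bounded with $O$ in its interior. With these two elementary facts in hand, both statements of the lemma drop out immediately from homogeneity and subadditivity of the gauge function.
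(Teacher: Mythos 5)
Your proof is correct. For the triangle inequality you argue exactly as the paper does: express $q-p$ and $r-q$ as points of $\lambda\mathcal{P}$ and $\mu\mathcal{P}$ and use convexity to place their sum in $(\lambda+\mu)\mathcal{P}$; your careful verification of $\lambda\mathcal{P}+\mu\mathcal{P}=(\lambda+\mu)\mathcal{P}$ is just an explicit packaging of the same computation. For the equality case your route differs slightly from the paper's: you observe that $q-p=t(r-p)$ and $r-q=(1-t)(r-p)$ and conclude by \emph{positive homogeneity} of the gauge $g_{\mathcal{P}}$, which gives both distances exactly and needs no attainment of any infimum. The paper instead fixes $\lambda=d_{\mathcal{P}}(p,r)$, writes $r=p+\lambda r'$ with $r'\in\mathcal{P}$ (implicitly using that the infimum is attained, which holds since $\mathcal{P}$ is a compact polytope), derives the two upper bounds $d_{\mathcal{P}}(p,q)\leq(1-t)\lambda$ and $d_{\mathcal{P}}(q,r)\leq t\lambda$, and then invokes the already-proved triangle inequality to upgrade these to equality. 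Your homogeneity argument is marginally cleaner and more self-contained on this point; the paper's argument stays closer to the dilation picture used throughout the section. Either way, both halves of the lemma are fully established.
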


\begin{proof}
To prove the triangle inequality, it will be sufficient to show that if $q\in p+\lambda.\mathcal{P}$ and $r\in q+\mu.\mathcal{P}$, then $r\in p+(\lambda+\mu).\mathcal{P}$. We write $q=p+\lambda.q'$ and $r=q+\mu.r'$ for two points $q'$ and $r'$ in $\mathcal{P}$. We can then write $r=p+\lambda.q'+\mu.r'=p+(\lambda+\mu)(\frac{\lambda}{\lambda+\mu}.q'+\frac{\mu}{\lambda+\mu}.r')$. $\mathcal{P}$ being convex and $\lambda,\mu\geq0$, we infer that $\frac{\lambda}{\lambda+\mu}.q'+\frac{\mu}{\lambda+\mu}.r'\in \mathcal{P}$, and so $r\in p+(\lambda+\mu).\mathcal{P}$. The triangle inequality follows.

\noindent To prove the second part of the lemma, let $t \in [0,1]$ be such that $q=t.p+(1-t).r\:.$ By the triangle inequality, it will be enough to prove that $d_{\mathcal{P}}(p,q)+d_{\mathcal{P}}(q,r)\leq d_{\mathcal{P}}(p,r)$. Let $d_{\mathcal{P}}(p,r)=\lambda$ so that $r = p+\lambda.r'$ for some point $r'$ in $\mathcal{P}$. We infer first that 
$q=t.p+(1-t).r=t.p+(1-t)(p+\lambda.r')= p+(1-t)\lambda.r'$, which implies that $d_{\mathcal{P}}(p,q)\leq(1-t)\lambda$. Similarly we have $t.r=t.p+t\lambda.r'=q-(1-t)r+t\lambda.r'$. It follows that $r=q+t\lambda r'$ and so $d_{\mathcal{P}}(q,r)\leq t\lambda\:.$ We conclude that $d_{\mathcal{P}}(p,q)+d_{\mathcal{P}}(q,r)\leq d_{\mathcal{P}}(p,r)$, and the lemma follows.
\end{proof}

We also observe that the polyhedral metric $d_{\mathcal{P}}(.\:,.)$ is translation invariant, i.e.
\begin{lemma}\label{lem:trans-inv} For any two points $p,q$ in $\mathbb R^n$, and for any vector $v\in\mathbb R^n$, we have $d_{\mathcal{P}}(p,q) = d_{\mathcal{P}}(p-v,q-v)$. In particular, $d_{\mathcal{P}}(p,q)=d_{\mathcal{P}}(p-q,O)=d_{\mathcal{P}}(O,q-p)$. 
\end{lemma}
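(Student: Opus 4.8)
The plan is to unpack the definition of $d_{\mathcal{P}}$ and observe that membership in a dilated-translated copy of $\mathcal{P}$ depends only on the \emph{difference} of the two points involved. Concretely, for $\lambda \geq 0$ we have $q \in \mathcal{P}(p,\lambda) = p + \lambda\mathcal{P}$ if and only if $q - p \in \lambda\mathcal{P}$. The right-hand condition involves $p$ and $q$ only through $q-p$, so it is unchanged if we replace the pair $(p,q)$ by $(p-v, q-v)$, since $(q-v)-(p-v) = q-p$. Hence the sets $\{\lambda \geq 0 \mid q \in \mathcal{P}(p,\lambda)\}$ and $\{\lambda \geq 0 \mid q-v \in \mathcal{P}(p-v,\lambda)\}$ are literally the same set, and therefore their infima agree, which is precisely $d_{\mathcal{P}}(p,q) = d_{\mathcal{P}}(p-v,q-v)$.

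For the ``in particular'' clause I would simply specialize the translation vector. Taking $v = q$ gives $d_{\mathcal{P}}(p,q) = d_{\mathcal{P}}(p-q, q-q) = d_{\mathcal{P}}(p-q, O)$, and taking $v = p$ gives $d_{\mathcal{P}}(p,q) = d_{\mathcal{P}}(p-p, q-p) = d_{\mathcal{P}}(O, q-p)$. No further argument is needed.

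Honestly, there is no substantive obstacle here: the lemma is a direct consequence of the fact that $\mathcal{P}(p,\lambda)$ is defined as a translate of $\lambda\mathcal{P}$, so $d_{\mathcal{P}}$ is built out of the translation-equivariant family of bodies $\{\lambda\mathcal{P}\}_{\lambda\geq 0}$. The only thing worth stating carefully is the equivalence $q \in p+\lambda\mathcal{P} \iff q-p \in \lambda\mathcal{P}$, after which everything is formal. I would keep the written proof to two or three sentences mirroring the argument above.
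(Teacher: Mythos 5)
Your proof is correct and follows essentially the same route as the paper's: both rest on the observation that $q \in p+\lambda\mathcal{P}$ if and only if $q-v \in (p-v)+\lambda\mathcal{P}$ (equivalently, that membership depends only on $q-p$), so the sets of admissible $\lambda$ coincide and the infima agree. The specializations $v=q$ and $v=p$ for the ``in particular'' clause are exactly what is intended.
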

\begin{proof} The proof is easy: if $q\in p+\lambda.\mathcal{P}$, then $q-v \in p-v+\lambda.\mathcal{P}$, and vice versa. 
\end{proof}

\begin{remark}\rm
The notion of a polyhedral distance function is essentially the concept of a gauge function of a convex body that has been studied in \cite{Siegel89}. 
Lemmas \ref{lin_lem} and \ref{lem:trans-inv} can be derived in a straight forward way from the results in \cite{Siegel89}. 
\end{remark}

Recall that the $n$-dimensional hyperplane $H_k$ is defined as $H_k=\{(x_0,\dots,x_n)|~ \sum_{i=0}^{n}x_i=k\}$. 
We will be mainly be using distance functions defined by regular simplices $\triangle$ and $\bar{\triangle}$ where the simplices $\triangle$ and $\bar{\triangle}$ are defined as follows: 

\begin{definition}
The regular simplex $\triangle$ is the convex hull of $t_0,\dots,t_{n} \in H_0$ where 
\label{deltaver_eq}\begin{equation}\notag
t_{ij}=
\begin{cases}
~~n, \text{~if}~i=j,\\
-1, \text{~otherwise}
\end{cases}
\end{equation}
 for $i$ from $0,\dots,n$ and $t_{ij}$ is the $j$-th coordinate of $t_i$. We define $\bar{\triangle}$ as $-\triangle$.
\end{definition}

We note that for points in $H_0$, the distance functions $d_{\triangle}$ and $d_{\bar{\triangle}}$ have a simple formula:

\begin{lemma} (Lemma 4.7, \cite{AmiMan10})
For any pair of points $p,~q$ in $H_0$, we have:
              \begin{gather}
           \notag d_{\triangle}(p,q)=|\min_i (q_i-p_i)|,\\
          \notag d_{\bar{\triangle}}(p,q)=|\min_i (p_i-q_i)|.
         \end{gather}
\end{lemma}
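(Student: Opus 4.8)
The plan is to reduce the computation to the origin using translation invariance, and then carry out an explicit barycentric-coordinate computation.

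First, I would apply Lemma~\ref{lem:trans-inv} to write $d_{\triangle}(p,q) = d_{\triangle}(O, q-p)$, and set $v := q - p$. Since $p,q \in H_0$, the coordinates of $v$ sum to $0$, so $v \in H_0$ as well; this fact will be used crucially below. Note also that $O$ is the barycenter $\frac{1}{n+1}\sum_{i=0}^n t_i$ of the vertices of $\triangle$ (for each fixed coordinate $j$ the sum $\sum_i t_{ij} = n + n\cdot(-1) = 0$), so it is a legitimate center in the relative interior of $\triangle$, and by definition $d_{\triangle}(O,v) = \inf\{\lambda \ge 0 : v \in \lambda\triangle\}$.

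Second, I would rewrite the vertices as $t_i = (n+1)e_i - \mathbf{1}$, where $\mathbf{1} = (1,\dots,1)$ and $e_i$ is the $i$-th standard basis vector of $\mathbb{R}^{n+1}$. For $\lambda > 0$, membership $v \in \lambda\triangle$ is equivalent to $v = \lambda\sum_{i} \mu_i t_i$ with $\mu_i \ge 0$ and $\sum_i \mu_i = 1$; substituting the formula for $t_i$ gives, coordinate-wise, $\mu_i = \tfrac{1}{n+1}\bigl(v_i/\lambda + 1\bigr)$. The key simplification is that the normalization $\sum_i \mu_i = 1$ is automatic: summing these expressions produces $\tfrac{1}{n+1}\bigl(\tfrac{1}{\lambda}\sum_i v_i + (n+1)\bigr)$, which equals $1$ precisely because $\sum_i v_i = 0$. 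Hence the only genuine constraint is $\mu_i \ge 0$ for all $i$, equivalently $v_i \ge -\lambda$ for all $i$, equivalently $\lambda \ge -\min_i v_i$.

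Third, I would conclude that $d_{\triangle}(O,v) = \inf\{\lambda \ge 0 : \lambda \ge -\min_i v_i\} = \max\{0,\, -\min_i v_i\}$; and since $\sum_i v_i = 0$ forces $\min_i v_i \le 0$, this equals $-\min_i v_i = |\min_i v_i| = |\min_i (q_i - p_i)|$. The companion formula for $d_{\bar{\triangle}}$ then follows at once from $\bar{\triangle} = -\triangle$: since $v \in \lambda(-\triangle)$ iff $-v \in \lambda\triangle$, we get $d_{\bar{\triangle}}(p,q) = d_{\triangle}(O,-v) = |\min_i(-v_i)| = |\min_i(p_i-q_i)|$. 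As for difficulty: there is no serious obstacle here --- the content is a one-line geometric picture (dilate a simplex centred at the origin until it first contains the point $v$) made precise. The only points needing care are the sign bookkeeping (verifying $\min_i v_i \le 0$, so that dropping the absolute value in the generic step is legitimate) and the degenerate case $v = 0$, i.e. $p = q$, where both sides vanish and the $\lambda = 0$ value of the infimum must be invoked directly rather than through the barycentric parametrization.
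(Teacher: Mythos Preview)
Your argument is correct. The reduction to the origin via translation invariance, the explicit barycentric computation using $t_i=(n+1)e_i-\mathbf{1}$, the observation that the affine constraint $\sum_i\mu_i=1$ is automatic because $v\in H_0$, and the sign check $\min_i v_i\le 0$ are all sound; the derivation of the $d_{\bar\triangle}$ formula from $\bar\triangle=-\triangle$ is also fine.

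As for comparison: the present paper does not actually prove this lemma. It is stated in the preliminaries with a citation to \cite{AmiMan10} (Lemma~4.7 there) and no argument is given, so there is no in-paper proof to hold yours against. Your write-up is a perfectly standard and self-contained way to recover the result, and it is essentially the same computation one finds in the cited source: express membership in a dilate of $\triangle$ via barycentric coordinates and read off the minimal dilation factor. Nothing to add.
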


Given a permutation $\pi$ on the $n+1$ vertices of $G$, define the ordering $\pi(v_0) <_{\pi} \pi(v_1)<_{\pi} \dots <_{\pi}\pi(v_n)$ and 
orient the edges of graph $G$ according to the ordering defined by $\pi$ i.e., there is an oriented edge from $v_i$ to $v_j$ if $(v_i,v_j) \in E$ 
and if $v_i<_{\pi}v_j$ in the ordering defined by $\pi$. Consider the acyclic orientation induced by a permutation $\pi$ on the set of vertices of $G$ and
 define $\nu_{\pi}=(indeg_{\pi}(v_0)-1,\dots,indeg_{\pi}(v_{n})-1)$, where $indeg_{\pi}(v)$ is the indegree of the vertex $v$ in the directed graph oriented according to $\pi$. 
Define 
\begin{equation}
\text{Ext}(L_G)= \{-\nu_{\pi}+q|~ \pi \in S_{n+1},~ q \in L_G\}.
\end{equation}











\subsection{Distance function induced by a Discrete Point Set}\label{distfunc_subsect}

Given a polyhedral distance function $\mathcal{P}$ and a discrete point set $S$, we define a function $h_{\mathcal{P},S}:\mathbb{R}^{n} \rightarrow \mathbb{R}$
as:
\begin{equation} h_{\mathcal{P},S}(p)= \min_{q \in S}\{d_{\mathcal{P}}(p,q)\}\end{equation}

In particular, the notion of local minima and local maxima of the distance function $h_{\mathcal{P},S}$ turns out to be useful:

\begin{definition}{\bf{(Local Maxima and Local Minima of $h_{\mathcal{P},S}$)}}
Let  $B(p,\epsilon)$ be the Euclidean ball of radius $\epsilon$ centered at $p$.
A point $c$ in $\mathbb{R}^{n}$ is called a local minimum of $h_{\mathcal{P},S}$ if there exists an $\epsilon>0$ such that
  $h_{\mathcal{P},S}(c) \leq h_{\mathcal{P},S}(q)$ for all $q \in B(c,\epsilon)$. A point $c$ in $\mathbb{R}^{n}$ is called a local maximum of $h_{\mathcal{P},S}$ if there exists an $\epsilon>0$ such that $h_{\mathcal{P},S}(c) \geq h_{\mathcal{P},S}(q)$ for all $q \in B(c,\epsilon)$.
\end{definition}



We denote the set of local maxima of $h_{\mathcal{P},S}$ by Crit$_{\mathcal{P}}(S)$.

\section{Algorithms for computing the rank}\label{geomalg_sect}

In this section, we will construct algorithms for computing the rank with the main result being an algorithm for computing the rank that runs
 in polynomial time when the number of vertices of the multigraph is fixed. 




\subsection{A simplification}\label{simp_subsect}

We shall first observe that by using the Riemann-Roch theorem we can restrict our attention to divisors of degree between zero
and $g-1$. Firstly, a divisor of negative degree must have rank minus one. Furthermore, by the Riemann-Roch formula we have:

\begin{lemma} If the degree of $D$ is strictly greater than $2g-2$, then $r(D)=deg(D)-g$. \end{lemma}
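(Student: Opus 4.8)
The plan is to deduce this directly from the Riemann-Roch formula \eqref{RieRo_form}, once the degree of the canonical divisor $K$ is known. So the first step is to record that $\deg(K) = 2g-2$. This is in fact forced by Riemann-Roch itself: applying \eqref{RieRo_form} with $D = 0$ gives $r(0) - r(K) = -(g-1)$, and applying it with $D = K$ gives $r(K) - r(0) = \deg(K) - (g-1)$; adding the two identities yields $\deg(K) = 2(g-1) = 2g-2$. (Equivalently, one may invoke the standard description of $K$, whose coordinate at a vertex $v$ equals $\deg(v)-2$, and sum over the vertices.)

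The second step is to observe that whenever $\deg(D) > 2g-2$ we have $\deg(K-D) = \deg(K) - \deg(D) = 2g-2-\deg(D) < 0$. Chip-firing moves preserve the total number of chips: by the first lemma of the introduction, equivalent divisors differ by a vector of the form $Q\cdot w$, and each column of $Q$ has coordinate sum $0$ by property $(C_2)$, so $Q\cdot w$ has coordinate sum $0$. Consequently a divisor of negative degree is never equivalent to an effective divisor. Hence, by Definition \ref{rank_def}, zero chips need be removed from $K-D$ before it fails to be equivalent to an effective divisor, so $r(K-D) = 0 - 1 = -1$; this is precisely the fact, already noted above, that a divisor of negative degree has rank $-1$.

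The final step is to substitute $r(K-D) = -1$ into \eqref{RieRo_form}, which gives
\[
r(D) = r(K-D) + \deg(D) - (g-1) = -1 + \deg(D) - (g-1) = \deg(D) - g,
\]
as claimed. I do not anticipate any genuine obstacle: the statement is a routine corollary of the Riemann-Roch theorem, and the only auxiliary input, the value $\deg(K) = 2g-2$, is itself a direct consequence of \eqref{RieRo_form} applied at $D=0$ and $D=K$.
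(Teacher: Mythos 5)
Your proof is correct and follows the same route as the paper: observe that $r(K-D)=-1$ because $\deg(K-D)<0$, then apply the Riemann--Roch formula. The only difference is that you also supply the auxiliary fact $\deg(K)=2g-2$ (derived from \eqref{RieRo_form} at $D=0$ and $D=K$), which the paper's one-line proof leaves implicit.
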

\begin{proof} Observe that if the degree of $D$ is strictly greater than $2g-2$ then the rank of $K-D$ is $-1$ and apply the Riemann-Roch theorem. 
\end{proof}

Furthermore, we can compute the rank of divisors of degree between $g$ and $2g-2$ by computing the rank of $K-D$, a divisor that has degree 
between zero and $g-1$ and then applying the Riemann-Roch theorem. Hence, we consider the problem of computing the rank of a divisor of degree between zero and $g-1$. 
In fact, we consider the decision version of the problem i.e., we want to decide (efficiently) if $r(D) \leq k$ for every $k$ between zero and $g-1$; 
observe that such a procedure combined with a binary search over the parameter $k$ will compute the rank in time $O(\ln(g))$ times the running time of the procedure.

\subsection{A first attempt at computing the rank}

Let us discuss a first attempt at computing the rank. We will compute rank directly from its definition (Definition \ref{rank_def}). 
We will use the fact that there is a polynomial time algorithm for testing if $r(D) \geq 0$ 
due to the independent work of  Dhar \cite{Dha93} and Tardos \cite{Tar88}.

\begin{algorithm}\label{firstatt_algo}
\begin{enumerate}
\item Enumerate all effective divisors of degree at most the degree of the divisor $D$.  

\item Find an effective divisor $E$ of  smallest degree such that $r(D-E)= -1$ by using Dhar's algorithm. 
\end{enumerate}
\end{algorithm}

\begin{theorem}
The running time of Algorithm \ref{firstatt_algo} is $O(2^{n \ln g})$.
\end{theorem}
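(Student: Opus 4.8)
The running time is dominated by the enumeration in Step~1 together with one call to the burning algorithm of Dhar~\cite{Dha93}/Tardos~\cite{Tar88} per enumerated divisor, so the proof is essentially a counting argument. By the reduction of Section~\ref{simp_subsect} we may assume $0 \le \deg(D) \le g-1$, so Step~1 enumerates all effective divisors of degree at most $g-1$, and Step~2 searches through them (in order of increasing degree) for the first $E$ with $r(D-E) = -1$; testing this last condition is exactly testing whether $D-E$ fails to be equivalent to an effective divisor, which Dhar's burning algorithm performs in time polynomial in the size of $G$ and in $\log g$.

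First I would count the divisors produced in Step~1. An effective divisor of degree exactly $j$ on the $n+1$ vertices of $G$ is a placement of $j$ indistinguishable chips among $n+1$ vertices, so by stars and bars there are exactly $\binom{j+n}{n}$ of them. Summing over $j$ and using the hockey-stick identity $\sum_{j=0}^{g-1}\binom{j+n}{n} = \binom{g+n}{n+1}$, the total number of effective divisors enumerated is $\binom{g+n}{n+1}$.

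Next I would bound this count crudely: $\binom{g+n}{n+1} \le (g+n)^{n+1} \le (2g)^{n+1} = 2^{(n+1)(1+\log g)} = 2^{O(n\log g)}$ in the regime $g \ge n$ (which is the only regime in which the algorithm is slow). Generating and writing down one divisor costs $O(n\log g)$, and each call to Dhar's algorithm costs $\mathrm{poly}(\mathrm{size})(G)$; multiplying the number of divisors by this polynomial overhead and absorbing it into the exponent yields a total running time of $O(2^{n\ln g})$, which is the claim.

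There is no genuinely hard step here: the argument is a stars-and-bars count, the hockey-stick identity, and a black-box invocation of the polynomial-time algorithm for deciding $r(\cdot)\ge 0$. The only point worth stressing — and the reason the theorem is recorded at all — is the \emph{interpretation} of the bound: for a multigraph on $n+1$ vertices the quantity $g = m-n+1$ can be exponentially large in the bit-size of the input, so $2^{O(n\log g)}$ is genuinely exponential in the input size even for fixed $n$. This is exactly what motivates the geometric algorithm of Section~\ref{geominter_theo}, whose running time $2^{O(n\log n)}\mathrm{poly}(\mathrm{size})(G)$ is polynomial for a fixed number of vertices.
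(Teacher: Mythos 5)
The paper states this theorem without any proof, so there is nothing to compare against; your stars-and-bars count $\sum_{j=0}^{g-1}\binom{j+n}{n}=\binom{g+n}{n+1}$ followed by a black-box call to Dhar's algorithm per divisor is exactly the argument the author must have had in mind, and it is correct. The only caveat is one you inherit from the paper itself: your count gives $2^{O(n\log g)}\,\mathrm{poly}(\mathrm{size}(G))$, which is not literally $O(2^{n\ln g})$ (that would be roughly $g^{n\ln 2}$, smaller than $\binom{g+n}{n+1}\sim g^{n+1}/(n+1)!$); the theorem's exponent should be read as $O(n\log g)$, as the surrounding discussion in the paper implicitly does, and with that reading your proof establishes the claim.
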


The running time of the Algorithm \ref{firstatt_algo} is not polynomial in the size of the input even for a fixed number of vertices since the quantity $2^{n \ln g}$
is not polynomially bounded in the size of the input. There is general interest 
 in obtaining an algorithm that runs in polynomial time for a fixed number of vertices and furthermore, in obtaining a singly exponential time algorithm 
i.e., an algorithm with running time $2^{O(n)}\text{poly}(\text{size}(G))$.  We will now undertake a deeper study of rank to obtain an algorithm that runs in time polynomial in the size of the input provided that the number of vertices is fixed.
More precisely, our algorithm has running time $2^{O(n \log n)} poly(\text{size}(G))$. An important ingredient is a geometric interpretation of rank that we shall obtain in the following section.

\subsection{A geometric interpretation of rank}

We start with the following formula for rank first shown in Baker and Norine \cite{BaNo07} and later reproven in Amini and Manjunath \cite{AmiMan10}.

\begin{theorem}\label{rankform_theo}
For any divisor $D$, we have:
 \begin{equation}r(D)=min_{\nu \in Ext(L_G)}deg^{+}(D-\nu)-1 \end{equation} where  $deg^{+}(D)=\sum_{i:D_i>0}D_i$.
\end{theorem}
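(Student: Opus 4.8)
The plan is to establish the formula by two inequalities. The key objects are the set $\Ext(L_G)$ of ``extremal'' points $-\nu_\pi + q$ and the function $\mathrm{deg}^+$. I would first recall the combinatorial characterization underlying $r(D)\ge 0$: a divisor $D$ has $r(D)\ge 0$ if and only if $D$ is equivalent to an effective divisor, and the classical reduced-divisor / Dhar burning algorithm (Dhar, Tardos) gives, for any fixed vertex $v_0$, a unique $v_0$-reduced divisor equivalent to $D$; $D$ is equivalent to an effective divisor iff this reduced divisor is effective. The bridge to $\Ext(L_G)$ is the fact — established in Baker--Norine and reproven in Amini--Manjunath — that a divisor $D$ satisfies $r(D)=-1$ (i.e. $D$ is \emph{not} equivalent to an effective divisor) if and only if $D$ is equivalent to a divisor of the form $\nu_\pi$ for some acyclic orientation/permutation $\pi$, or more precisely $D \le \nu_\pi$ coordinatewise up to equivalence; the divisors $\nu_\pi$ are exactly the maximal non-effective-equivalence-class representatives, with $\deg \nu_\pi = g-1$. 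Equivalently, $r(D)\ge 0$ iff $D\notin \Ext(L_G) - (\text{nonneg orthant})$ in the appropriate sense.

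For the first inequality, $r(D) \le \min_{\nu\in\Ext(L_G)} \deg^+(D-\nu) - 1$: fix any $\nu = -\nu_\pi + q \in \Ext(L_G)$, and write $E$ for the effective divisor supported on the coordinates where $D-\nu$ is positive, with $E_i = \max(D_i-\nu_i,0)$, so $\deg E = \deg^+(D-\nu)$. Then $D - E \le \nu$ coordinatewise; since $\nu$ is equivalent to $-\nu_\pi$ (as $q\in L_G$), and $-\nu_\pi$ is a ``maximal'' divisor not equivalent to an effective divisor, $D-E$ is not equivalent to an effective divisor, hence $r(D-E) = -1$. By the definition of rank (Definition \ref{rank_def}), removing $\deg E$ chips destroys effectivity-equivalence, so $r(D) \le \deg E - 1 = \deg^+(D-\nu)-1$. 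Taking the minimum over $\nu$ gives the bound.

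For the reverse inequality, $r(D) \ge \min_{\nu\in\Ext(L_G)} \deg^+(D-\nu) - 1$: let $E$ be an effective divisor of minimal degree with $r(D-E) = -1$, so $\deg E = r(D)+1$. Since $D-E$ is not equivalent to an effective divisor, by the Baker--Norine characterization there is a permutation $\pi$ and $q\in L_G$ with $D - E$ equivalent to a divisor that is $\le \nu_\pi$, i.e.\ $D - E - q \le \nu_\pi$, i.e.\ setting $\nu := -\nu_\pi + q\in \Ext(L_G)$ we get $D - \nu \le E$ coordinatewise. Since $E\ge 0$, this forces $\deg^+(D-\nu) \le \deg^+(E) = \deg E$ (the positive part of a divisor dominated by a nonnegative divisor has total mass at most that of the nonnegative divisor). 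Hence $\min_{\nu\in\Ext(L_G)}\deg^+(D-\nu) \le \deg E = r(D)+1$, which rearranges to the desired inequality.

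The main obstacle — and the step where I'd need to lean hardest on the cited results — is the precise structural statement that $r(D) = -1$ is equivalent to $D$ being, up to the Laplacian lattice, coordinatewise dominated by some $\nu_\pi$; this is essentially the content of the combinatorial core of the Riemann--Roch proof (the ``$\nu_\pi$ are the maximal non-winnable divisors'' fact, sometimes phrased via non-special divisors and acyclic orientations). Everything else is bookkeeping with $\deg^+$ and the orthant ordering. A secondary subtlety is making sure the ``$\le \nu_\pi$ up to equivalence'' and the ``$\min$ is attained'' claims are legitimate: attainment of the minimum follows because $\Ext(L_G)$ modulo $L_G$ is finite (finitely many permutations), and $\deg^+(D-\nu)$ is constant on $L_G$-translates only in the relevant combinatorial sense — more carefully, one restricts to the finitely many $\nu_\pi$ and the finitely many relevant lattice translates near $D$, so the infimum is a genuine minimum over a finite set.
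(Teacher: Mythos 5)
The paper never actually proves Theorem \ref{rankform_theo}: it is imported from \cite{BaNo07} (and \cite{AmiMan10}), and the remark following it only describes the two known proof strategies. Your argument is, in substance, the first of these, the Baker--Norine route: you reduce the formula to the structural dichotomy that the divisors $\nu_\pi$ attached to acyclic orientations are, up to linear equivalence, exactly the maximal divisors of rank $-1$ (this is Theorem \ref{structsig_theo} of the paper, itself also cited from \cite{BaNo07}), and the two-inequality bookkeeping you build on top of it is correct: taking $E=(D-\nu)^+$ gives $D-E\le\nu$ with $\nu$ non-winnable, hence $r(D-E)=-1$ and $r(D)\le\deg^+(D-\nu)-1$; conversely a minimal effective $E$ with $r(D-E)=-1$ yields $D-\nu\le E$ for some $\nu$, hence $\deg^+(D-\nu)\le\deg E=r(D)+1$. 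So the proposal is a correct derivation of the rank formula from the dichotomy, but it is not self-contained: all the genuine content (existence and uniqueness of $v$-reduced divisors, equivalently that every non-winnable divisor is dominated, modulo $L_G$, by some $\nu_\pi$) is delegated to the cited black box, exactly as you acknowledge. One point to tidy up: with the paper's literal definition $\mathrm{Ext}(L_G)=\{-\nu_\pi+q\}$ the minimizing set consists of points of degree $-(g-1)$, whereas your own algebra in the second inequality produces $\nu=\nu_\pi+q$ of degree $g-1$, which is what the formula requires (since $r(D)=-1$ must be equivalent to $D\le\nu$ for some $\nu$ in the minimizing set, and such $\nu$ must have degree $g-1$). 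You should fix one sign convention for $\nu_\pi$ and $\mathrm{Ext}(L_G)$ and use it consistently in both halves of the argument rather than inheriting the paper's wobble.
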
 

\begin{remark}
Some remarks on the proof(s) of Theorem \ref{rankform_theo} are in order: as we mentioned earlier, Theorem \ref{rankform_theo} has two proofs,
the original proof due to Baker and Norine \cite{BaNo07} was based on combinatorial tools. In particular, the main component of the proof
was to establish the existence and uniqueness of a certain special type of divisors called ``v-reduced'' divisors in each linear equivalence class of divisors,
while the approach of Amini and Manjunath \cite{AmiMan10} involved studying the Laplacian lattice under the simplicial distance function $d_{\triangle}$.
\end{remark}



\subsubsection{A sketch of the approach}

Let us now briefly sketch our approach to computing the rank: We start with the formula to compute the rank and proceed as follows: 
we run over all the permutations $\pi \in S_{n+1}$ and for each permutation $\pi$ suppose that we could compute $\min_{q \in L_G}deg^{+}(D-v_{\pi}+q)$ 
in time that is possibly exponential but only in  $n$ then we would obtain an algorithm with running time $O(f(n)poly(\text{size}(G)))$ for some function $f$. 
But, how do we compute $\min_{q \in L_G}deg^{+}(D-v_{\pi}+q)$? 
One hope would be to reduce the problem to a closest vector problem on lattices or more generally to integer programming. 
Fortunately, the integer programming problem has an algorithm that runs in time that 
exponential only in $n$ (the dimension of the lattice). Such an algorithm would run in time $O(2^{n \log n} poly(\text{size}(G)))$. 
This approach requires a better understanding of the $deg^{+}$ function that we now obtain. 

\begin{definition} {\bf (Orthogonal projections onto $H_k$)}
 For a point $P \in \mathbb{R}^{n+1}$ we denote by $\pi_k(P)$ the orthogonal projection of $P$ onto the hyperplane $H_k$.
\end{definition}
For the sake of presentation, we first consider the case where the divisor has degree $g-1$. In this case, we observe that $deg^{+}(D-\nu)=\frac{\ell_1(D-\nu)}{2}$,
and that $\ell_1(D-\nu)=\ell_1(\pi_0(D)- \pi_0(\nu))$.  We denote the set $\pi_0(Ext(L_G))$ the orthogonal projection of $Ext(L_G)$ onto the hyperplane $H_0$ 
by $\text{Crit}_{\triangle}(L_G)$, as defined in Subsection \ref{distfunc_subsect} and indeed, the orthogonal projections of $Ext(L_G)$ are the local maxima of the distance 
function $h_{\triangle,L_G}$ we refer to \cite{AmiMan10} for more details.

\begin{corollary}\label{rankgminusone_cor}
For any divisor $D$ with $deg(D)=g-1$, we have:
 \begin{equation}r(D)=\min_{c \in \text{Crit}_{\triangle}(L_G)}\frac{\ell_1(\pi_0(D)-c)}{2}-1.\end{equation}
\end{corollary}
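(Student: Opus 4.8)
The plan is to specialize Theorem \ref{rankform_theo} to the case $\deg(D)=g-1$ and to convert the $\deg^{+}$ function into an $\ell_1$-distance on the hyperplane $H_0$, using the two observations quoted just before the statement, namely that $\deg^{+}(D-\nu)=\tfrac{1}{2}\ell_1(D-\nu)$ when $\deg(D-\nu)=0$, and that $\ell_1(D-\nu)=\ell_1(\pi_0(D)-\pi_0(\nu))$. First I would record that for any $\nu\in\mathrm{Ext}(L_G)$ we have $\deg(\nu)=\deg(-\nu_\pi+q)=\deg(-\nu_\pi)+\deg(q)$; since $q\in L_G\subseteq A_n$ has degree $0$, and $\deg(-\nu_\pi)=-\sum_{i}(\mathrm{indeg}_\pi(v_i)-1)=-m+(n+1)=-(g-1)$, every element of $\mathrm{Ext}(L_G)$ has degree $-(g-1)$. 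Hence $\deg(D-\nu)=(g-1)-(g-1)=0$, so $D-\nu\in H_0$ and the identity $\deg^{+}(D-\nu)=\tfrac{1}{2}\ell_1(D-\nu)$ applies to every term in the minimum of Theorem \ref{rankform_theo}.

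Next I would substitute this into the rank formula to get
\begin{equation}\notag
r(D)=\min_{\nu\in\mathrm{Ext}(L_G)}\frac{\ell_1(D-\nu)}{2}-1.
\end{equation}
Then I would use $\ell_1(D-\nu)=\ell_1(\pi_0(D)-\pi_0(\nu))$ — this holds because subtracting the common average coordinate from both $D$ and $\nu$ (which is exactly the orthogonal projection onto $H_0$ for vectors whose coordinate sums are equal) does not change the difference vector $D-\nu$ at all when $\deg(D)=\deg(\nu)$; more precisely, $\pi_0(D)-\pi_0(\nu)=D-\nu$ since the translation by $\tfrac{\deg D}{n+1}\mathbf{1}$ and by $\tfrac{\deg\nu}{n+1}\mathbf{1}$ cancel. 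So in fact $\ell_1(D-\nu)=\ell_1(\pi_0(D)-\pi_0(\nu))$ trivially here, and the point of the reformulation is simply that the minimand now depends on $\nu$ only through $\pi_0(\nu)$.

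Finally I would rewrite the index set: as $\nu$ ranges over $\mathrm{Ext}(L_G)$, the projection $\pi_0(\nu)$ ranges over $\pi_0(\mathrm{Ext}(L_G))=\mathrm{Crit}_{\triangle}(L_G)$ by the definition recalled before the statement. Since the quantity being minimized, $\tfrac{1}{2}\ell_1(\pi_0(D)-\pi_0(\nu))$, is a function of $\pi_0(\nu)$ alone, the minimum over $\nu\in\mathrm{Ext}(L_G)$ equals the minimum over $c\in\mathrm{Crit}_{\triangle}(L_G)$ of $\tfrac{1}{2}\ell_1(\pi_0(D)-c)$, which is exactly the claimed formula. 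I do not anticipate a serious obstacle: the only point requiring care is the bookkeeping that every element of $\mathrm{Ext}(L_G)$ has degree $-(g-1)$ (using $\sum_i\mathrm{indeg}_\pi(v_i)=m$ for any acyclic orientation) so that the halving identity legitimately applies term-by-term; the rest is substitution and a reindexing of the minimum.
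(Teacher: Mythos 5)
Your route is exactly the one the paper intends: specialize Theorem \ref{rankform_theo}, use $\deg^{+}(x)=\tfrac12\ell_1(x)$ for vectors of degree zero, observe that the minimand depends on $\nu$ only through $\pi_0(\nu)$, and reindex over $\mathrm{Crit}_{\triangle}(L_G)=\pi_0(\mathrm{Ext}(L_G))$. The paper offers no more than these observations as its ``proof,'' so in structure you have reproduced it faithfully, and you are right that the whole weight of the argument rests on the degree bookkeeping.

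That bookkeeping, however, is where your write-up is internally inconsistent. You correctly compute from the paper's definition $\mathrm{Ext}(L_G)=\{-\nu_\pi+q\}$ that $\deg(\nu)=-(g-1)$ for every $\nu\in\mathrm{Ext}(L_G)$, but in the very next clause you conclude $\deg(D-\nu)=(g-1)-(g-1)=0$, which silently uses $\deg(\nu)=+(g-1)$. With your own value $\deg(\nu)=-(g-1)$ you would get $\deg(D-\nu)=2(g-1)$, and then both identities you need fail: $\deg^{+}(x)=\tfrac12(\ell_1(x)+\deg(x))$, so the halving identity requires $\deg(x)=0$, and likewise $\pi_0(D)-\pi_0(\nu)=D-\nu$ only when $\deg(D)=\deg(\nu)$. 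The resolution is that the Baker--Norine rank formula (Theorem \ref{rankform_theo}) minimizes over divisors of degree $g-1$, i.e.\ over $\{\nu_\pi+q\}$; the minus sign in the paper's displayed definition of $\mathrm{Ext}(L_G)$ is best read as a sign-convention slip (one can check on $K_2$ with $D=0$ that minimizing over $\{-\nu_\pi+q\}$ gives the wrong rank). You should state explicitly which convention you are using and verify $\deg(\nu)=g-1$ under it; as written, the two sentences of your degree computation contradict each other, and the contradiction sits precisely at the step you yourself identify as the only delicate one. Once that sign is fixed, the remainder (substitution, projection, reindexing) is correct and complete.
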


Taking cue from Corollary \ref{rankgminusone_cor}, it is natural to ask if there is a similar ``distance function'' type interpretation
for divisors of degree between zero and $g-1$. We will answer this question in the affirmative, the relevant distance function, actually a family of 
distance functions is the following:

\begin{definition}({\bf Degree-Plus Distance})
\rm Let $k$ be a positive real number. For points $P$ and $Q$ in $H_0$, we define the generalised degree-plus distance between $P$ and $Q$ as
 $$ d_{k}^{+}(P,Q)= \sup\, \Bigl\{ r\,|\, \triangle(P,r) \cap \triangle(Q,r+k) = \emptyset \Bigr\}.$$
\end{definition}

Note that though $d^{+}_k$ does not appear to be a distance function at first glance, we will actually show that it is can be realised by a sequence of distance functions (See Section \ref{polydistfunc_sect} for a definition)

\begin{figure}
  \begin{center}
    \includegraphics[width=6cm]{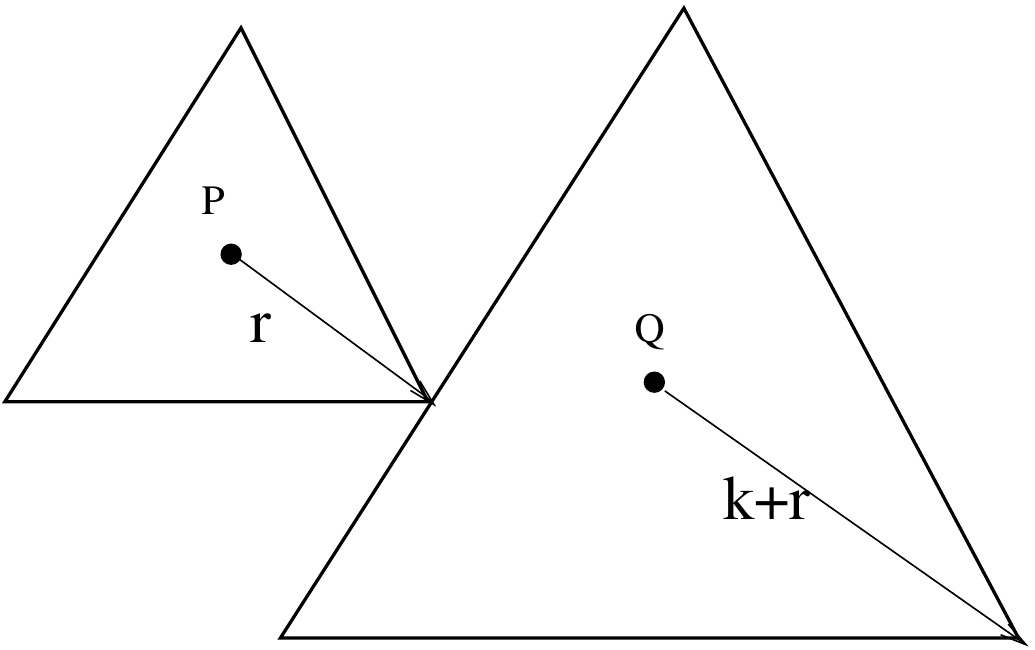} 
  \end{center}
\caption{\label{} The distance function $d^{+}_k$}
 \end{figure}

We will now note some basic properties of the function $d^{+}_k$:

\begin{lemma}({\bf Translation Invariance})\label{trans_lem}
For any points $P$, $Q$, and $T$ in $H_0$ and for any positive real numbers $r_1$ and $r_2$ we have: 
$\triangle(P,r_1) \cap \triangle(Q,r_2)=\emptyset$ if and only if $\triangle(P+T,r_1) \cap \triangle(Q+T,r_2)=\emptyset$.
\end{lemma}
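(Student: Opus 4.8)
The statement is a direct consequence of the definition of the dilation operator from Section~\ref{polydistfunc_sect}, and the plan is simply to unwind that notation. First I would record that $\triangle$ is a regular simplex centered at the origin $O$: its vertices $t_0,\dots,t_n$ satisfy $\sum_{i=0}^n t_i = 0$, so $O$ is their centroid and the center-translation convention $\mathcal P(p,\lambda)=p+\lambda.\mathcal P$ applies to $\triangle$ with $O$ as reference point. Consequently $\triangle(P,r_1)=P+r_1.\triangle$, and therefore
\[
\triangle(P+T,r_1) = (P+T) + r_1.\triangle = T + \bigl(P + r_1.\triangle\bigr) = T + \triangle(P,r_1),
\]
and likewise $\triangle(Q+T,r_2) = T + \triangle(Q,r_2)$.

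Next, since $T$ is a fixed vector, the map $x \mapsto x + T$ is a bijection of $\mathbb R^{n+1}$ that moreover sends $H_0$ to itself because $T\in H_0$. Applying this bijection to both dilates gives
\[
\triangle(P+T,r_1)\cap\triangle(Q+T,r_2) = \bigl(T+\triangle(P,r_1)\bigr)\cap\bigl(T+\triangle(Q,r_2)\bigr) = T + \bigl(\triangle(P,r_1)\cap\triangle(Q,r_2)\bigr).
\]
A translate of a set is empty if and only if the set itself is empty, so the left-hand side is empty exactly when $\triangle(P,r_1)\cap\triangle(Q,r_2)=\emptyset$, which is precisely the asserted equivalence. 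The same argument in fact shows that the intersection pattern of dilates of any convex body is invariant under a common translation, so it also re-derives Lemma~\ref{lem:trans-inv}.

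I do not expect any genuine obstacle here: the only points requiring a moment's care are (i) checking that $O$ is indeed the center of $\triangle$ so that the translation $T + \triangle(P,r_1) = \triangle(P+T,r_1)$ is legitimate, and (ii) noting that translation by $T\in H_0$ keeps all the objects inside the hyperplane $H_0$ on which $d^{+}_k$ and $\triangle(\cdot,\cdot)$ are defined. Once the notation is unwound the lemma is immediate, and it will be used freely in what follows — in particular to normalise one of the two points (say $Q$) to the origin when analysing the function $d^{+}_k$.
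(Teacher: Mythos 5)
Your proof is correct and is essentially the same argument as the paper's: the paper picks a point $S$ in the intersection, writes it as a convex combination of the (scaled, translated) vertices of $\triangle$, and adds $T$ to exhibit a point of the translated intersection, which is exactly your set-level observation that $\triangle(P+T,r_1)\cap\triangle(Q+T,r_2)=T+\bigl(\triangle(P,r_1)\cap\triangle(Q,r_2)\bigr)$. No gaps; your phrasing is just a slightly more abstract packaging of the identical idea.
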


\begin{proof}
Assume that $\triangle(P,r_1) \cap \triangle(Q,r_2) \neq \emptyset$ and consider a point $S \in \triangle(P,r_1) \cap \triangle(Q,r_2)$.
Now, $S= r_1 \sum_{i=1}^{n+1}\alpha_i v_i + P=r_2 \sum_{i=1}^{n+1}\beta_i v_i + Q$ for some $\alpha_i \geq 0$, $\beta_i \geq 0$ and $\sum_i \alpha_i=\sum_i \beta_i=1$. 
Now this implies that $S+T= r_1 \sum_{i=1}^{n+1}\alpha_i v_i + P+T=r_2 \sum_{i=1}^{n+1}\beta_i v_i + Q+T$. Hence, $S+T \in \triangle(P+T,r_1) \cap \triangle(Q+T,r_2) \neq \emptyset$. 
The converse follows by symmetry. 
 \end{proof}

\begin{lemma}({\bf Projection Lemma})\label{proj_lem}
Let $R$ be a point in $\mathbb{R}^{n+1}$ with $deg(R) \geq 0$, let $O$ be the origin and let $\pi_0(R)$ be the orthogonal projection of $R$ onto $H_0$. We have:
\begin{align*}
\inf_{Z \in H^{+}(R)\,\cap\, H^{+}(O)} \, deg(Z)\,\,= (n+1)\,\sup\,\Bigl\{\,r\,|\, \triangle(\pi_0(R),r) \cap \triangle(O,r+\frac{deg(R)}{n+1})=\emptyset\Bigr\}+ deg(R).
\end{align*}
\end{lemma}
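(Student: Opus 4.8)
The plan is to put both sides of the claimed identity into closed form and check that each equals $deg^{+}(R)$. Write $\delta=deg(R)\geq 0$ and $k=\delta/(n+1)\geq 0$, and recall that $H^{+}(X)=\{Z\in\mathbb R^{n+1}\mid Z_i\geq X_i\text{ for all }i\}=X+\mathbb R_{\geq 0}^{n+1}$. Then $H^{+}(R)\cap H^{+}(O)=\{Z\mid Z_i\geq\max(R_i,0)\text{ for all }i\}$ is a translated orthant, so the linear functional $deg$ attains its infimum over it at $Z^{\ast}=(\max(R_0,0),\dots,\max(R_n,0))$; hence the left-hand side equals $\sum_{i=0}^{n}\max(R_i,0)=deg^{+}(R)$. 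That disposes of one side.

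The substantive step is a clean description of when two dilated regular simplices are disjoint. Fix $P\in H_0$ and reals $r,s\geq 0$. Since $\triangle(P,r)$ is a translate of $r\cdot\triangle\subseteq H_0$, it is contained in $H_0$, and the copies $\triangle(P,\lambda)$ are nested in $\lambda$, so a point $X$ lies in $\triangle(P,r)$ if and only if $X\in H_0$ and $d_{\triangle}(P,X)\leq r$. By the formula $d_{\triangle}(P,X)=|\min_i(X_i-P_i)|$ recorded above for points of $H_0$ (and using $\sum_i(X_i-P_i)=0$, so that minimum is $\leq 0$), this condition reads $X_i\geq P_i-r$ for all $i$. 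Likewise $X\in\triangle(O,s)$ if and only if $X\in H_0$ and $X_i\geq -s$ for all $i$. Consequently $\triangle(P,r)\cap\triangle(O,s)\neq\emptyset$ exactly when the system $\sum_iX_i=0$, $X_i\geq\max(P_i-r,-s)$ for all $i$, is feasible, and shifting any candidate by a suitable multiple of $(1,\dots,1)$ shows feasibility is equivalent to $\sum_{i=0}^{n}\max(P_i-r,-s)\leq 0$. Specialising $s=r+k$ and pulling $-r$ out of each summand turns this into $\sum_i\max(P_i,-k)\leq (n+1)r$, so $\triangle(P,r)\cap\triangle(O,r+k)=\emptyset$ precisely when $r<\frac{1}{n+1}\sum_i\max(P_i,-k)$. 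Therefore
\[\sup\bigl\{\,r\mid\triangle(P,r)\cap\triangle(O,r+k)=\emptyset\,\bigr\}=\frac{1}{n+1}\sum_{i=0}^{n}\max(P_i,-k),\]
which is nonnegative since $\max(P_i,-k)\geq P_i$ for each $i$ and $\sum_iP_i=0$.

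It remains to specialise to $P=\pi_0(R)$ with $k=\delta/(n+1)$. The orthogonal projection onto $H_0$ is $\pi_0(R)=R-\frac{\delta}{n+1}(1,\dots,1)$, so $\pi_0(R)_i=R_i-k$ and $\max(\pi_0(R)_i,-k)=\max(R_i,0)-k$; summing over $i$ gives $\sum_i\max(\pi_0(R)_i,-k)=deg^{+}(R)-(n+1)k=deg^{+}(R)-\delta$. Plugging $P=\pi_0(R)$ into the displayed formula, the supremum in the statement of the lemma equals $\frac{deg^{+}(R)-\delta}{n+1}$, and hence the right-hand side is $(n+1)\cdot\frac{deg^{+}(R)-\delta}{n+1}+\delta=deg^{+}(R)$, matching the left-hand side computed above.

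The one point demanding care is the disjointness criterion in the middle paragraph: one must correctly translate membership in a dilated copy of $\triangle$ into the coordinatewise bounds $X_i\geq P_i-r$ — exactly where the vertex description of $\triangle$ (equivalently the formula for $d_{\triangle}$ on $H_0$) is used — and then recognise the feasibility of the resulting linear system as the single scalar inequality $\sum_i\max(P_i-r,-s)\leq 0$. The hypothesis $deg(R)\geq 0$ is needed only to guarantee $k\geq 0$, so that $\triangle(O,r+k)$ is a genuine nonnegative dilation for every admissible $r$; one should also note that the supremum in the statement is not attained, since the two simplices touch at the threshold value of $r$, though this does not change its value. Everything else is routine bookkeeping.
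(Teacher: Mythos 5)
Your proof is correct, but it takes a genuinely different route from the paper. The paper argues by a slicing correspondence: for a point $X\in H^{+}(R)\cap H^{+}(O)$ it intersects the two orthants with the level hyperplane $H_{deg(X)}$, observes that these slices are dilated copies of $\triangle$ of radii $\frac{deg(X)}{n+1}$ and $\frac{deg(X)-deg(R)}{n+1}$ meeting at $X$, and then projects to $H_0$ to pass back and forth between points of the orthant intersection and intersection points of the two simplex families; the identity follows from the two resulting inequalities. You instead evaluate both sides in closed form: the left side is the infimum of a linear functional over the translated orthant $\{Z\mid Z_i\geq\max(R_i,0)\}$, hence $deg^{+}(R)$, and the right side reduces, via the halfspace description $\triangle(P,r)\cap H_0=\{X\in H_0\mid X_i\geq P_i-r\}$ and the feasibility criterion $\sum_i\max(P_i-r,-s)\leq 0$, to $(n+1)\cdot\frac{1}{n+1}\sum_i\max(\pi_0(R)_i,-k)+deg(R)=deg^{+}(R)$. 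Your version is more explicit and arguably more airtight than the paper's (which asserts the key geometric facts about $\triangle_1,\triangle_2$ without verification), and as a bonus it identifies the common value as $deg^{+}(R)$, which is exactly what the subsequent Lemma~\ref{dplus_lem} needs; the paper's version buys a picture that generalises more readily to the Minkowski-sum polytopes $\mathcal{P}_{m,n}$ used later. One shared loose end, present in the paper as well: when $R$ is itself effective the set $\{r\geq 0\mid \triangle(\pi_0(R),r)\cap\triangle(O,r+k)=\emptyset\}$ is empty and the supremum must be read as $0$ by convention for the identity to hold; your observation that the threshold value $\frac{1}{n+1}\sum_i\max(P_i,-k)$ is nonnegative is the right thing to say here, but strictly speaking that degenerate case needs the convention spelled out.
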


\begin{proof}
Consider a point $X$, say, in the intersection of $H^{+}(O)$ and $H^{+}(R)$ and consider the intersection of the hyperplane $H_{deg(X)}$ with $H^{+}(O)$ and $H^{+}(R)$. 
Observe that the intersection of $H_{deg(X)}$ and $H^{+}(O)$ is a simplex that is a scaled and translated copy of $\triangle$, call it $\triangle_1$, centered at $\frac{deg(X)}{n+1}(1,\dots,1)$ and 
scaled by a factor of $\frac{deg(X)}{n+1}$. Similarly, the intersection of $H_{deg(X)}$ and $H^{+}(R)$ is also a simplex that is a scaled and translated copy of $\triangle$, 
call it $\triangle_2$ centered at $R+\frac{deg(X-R)}{n+1}(1,\dots,1)$ scaled by a factor of $\frac{deg(X-R)}{n+1}$. Observe that $deg(X) \geq deg(R) \geq deg(O)$. 
Indeed simplices $\triangle_1$ and $\triangle_2$ intersect at $X$ and $deg(X)$ is equal to $n+1$ times the radius of $\triangle_2$ plus $deg(R)$. We now project 
the simplices $\triangle_1$ and $\triangle_2$ onto $H_0$ and obtain $\inf \{ deg(Z)|$ $Z \in H^{+}(R) \cap H^{+}(O)\} \geq (n+1)\sup\{r|$ $\triangle(\pi_0(R),r) \cap \triangle(O,r+\frac{deg(R)}{n+1})=\emptyset\}+deg(R)$.
Now, consider a point $P$ in $\triangle(\pi_0(R),r) \cap \triangle(O,r+\frac{deg(R)}{n+1})$ and observe that the point $X=P+(r+\frac{deg(R)}{n+1})(1,\dots,1)$ is a point in the 
intersection of $H^{+}(O)$ and $H^{+}(R)$. This shows that $\inf \{ deg(Z)|$ $Z \in H^{+}(R) \cap H^{+}(O)\} \leq (n+1)\sup\{r|$ $\triangle(\pi_0(R),r) \cap \triangle(O,r+\frac{deg(R)}{n+1})=\emptyset\}+deg(R)$. 
This completes the proof.
\end{proof}

We are now ready to establish the connection between the $deg^{+}$ function and the function $d^{+}_k$.

\begin{lemma}\label{dplus_lem}
For any pair of points $P$ and $Q$ in $\mathbb R^{n+1}$ with $deg(P) \geq deg(Q)$, we have 
$$deg^{+}(P-Q)=(n+1)\, d^{+}_{k}(\pi_0(P),\pi_0(Q))+deg(P-Q)$$
 for $k=\frac{deg(P-Q)}{n+1}$
\end{lemma}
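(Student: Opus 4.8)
The plan is to reduce the statement to the Projection Lemma (Lemma~\ref{proj_lem}) by a translation, exploiting the fact that $d^{+}_k$ and the $\deg^{+}$ function are both translation invariant. Set $R = P - Q$, so $\deg(R) = \deg(P-Q) \geq 0$ by hypothesis, and let $k = \frac{\deg(R)}{n+1}$. First I would observe the elementary identity $\deg^{+}(R) = \inf\{\deg(Z) \mid Z \in H^{+}(R) \cap H^{+}(O)\}$: indeed, a divisor $Z$ lies in $H^{+}(R) \cap H^{+}(O)$ exactly when $Z \geq 0$ coordinatewise and $Z \geq R$ coordinatewise, i.e. $Z_i \geq \max(R_i, 0)$; the coordinatewise minimal such $Z$ is $Z_i = \max(R_i,0)$, and its degree is precisely $\sum_{i : R_i > 0} R_i = \deg^{+}(R)$. (Here I am reading $H^{+}(R)$ as the ``upper orthant'' $\{Z \mid Z \geq R\}$, consistent with its use in Lemma~\ref{proj_lem}.)

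Next I would feed this into Lemma~\ref{proj_lem} applied to the point $R$: it yields
\[
\deg^{+}(R) \;=\; (n+1)\,\sup\Bigl\{\,r \,\Bigm|\, \triangle(\pi_0(R), r) \cap \triangle\bigl(O, r + \tfrac{\deg(R)}{n+1}\bigr) = \emptyset \Bigr\} + \deg(R),
\]
and the supremum on the right is, by definition of the degree-plus distance, exactly $d^{+}_k(\pi_0(R), O)$ with $k = \frac{\deg(R)}{n+1}$. So it remains to identify $d^{+}_k(\pi_0(R), O)$ with $d^{+}_k(\pi_0(P), \pi_0(Q))$ and $\deg(R)$ with $\deg(P-Q)$; the latter is immediate. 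For the former, I would use that orthogonal projection onto $H_0$ is linear, so $\pi_0(R) = \pi_0(P) - \pi_0(Q)$, together with the Translation Invariance lemma (Lemma~\ref{trans_lem}): translating both arguments of each simplex by $\pi_0(Q)$ shows $\triangle(\pi_0(R), r) \cap \triangle(O, r') = \emptyset$ iff $\triangle(\pi_0(P), r) \cap \triangle(\pi_0(Q), r') = \emptyset$, hence the defining suprema agree and $d^{+}_k(\pi_0(P) - \pi_0(Q), O) = d^{+}_k(\pi_0(P), \pi_0(Q))$. Assembling these gives the claimed formula.

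The step I expect to be the main obstacle is the first one: pinning down the exact meaning of the notation $H^{+}(\cdot)$ (the excerpt uses it in Lemma~\ref{proj_lem} without redefining it here) and verifying that the elementary orthant identity $\deg^{+}(R) = \inf\{\deg(Z) : Z \in H^{+}(R) \cap H^{+}(O)\}$ is literally what Lemma~\ref{proj_lem} presupposes — in particular, checking the edge cases where some $R_i = 0$, where the infimum might be attained on the boundary, and making sure the degree hypothesis $\deg(R) \geq 0$ (needed to invoke Lemma~\ref{proj_lem}) is exactly the hypothesis $\deg(P) \geq \deg(Q)$ we are given. Everything after that is a mechanical chain of substitutions using linearity of $\pi_0$ and Lemma~\ref{trans_lem}, with no real calculation.
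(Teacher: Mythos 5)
Your proposal is correct and follows essentially the same route as the paper's proof: the orthant identity $deg^{+}(R)=\inf\{deg(Z)\,|\,Z\in H^{+}(R)\cap H^{+}(O)\}$ (the paper phrases it via $R\oplus O$), then Lemma \ref{proj_lem} applied to $R=P-Q$, then translation invariance (Lemma \ref{trans_lem}) together with linearity of $\pi_0$ to pass from $(\pi_0(R),O)$ to $(\pi_0(P),\pi_0(Q))$. Your reading of $H^{+}(\cdot)$ as the upper orthant is the intended one, and your hypothesis check $deg(R)\geq 0 \Leftrightarrow deg(P)\geq deg(Q)$ matches what the paper implicitly uses.
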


\begin{proof}
First consider a point $R$ in $\mathbb{R}^{n+1}$. We have $deg^{+}(R)=\sum_{R_i \geq 0}R_i=deg(R\oplus O)$, where $R \oplus O=(max(R_0,0),\dots,max(R_n,0))$. Now we have:
$$
deg(R\oplus O)= \inf_{Z \in H^{+}(R) \cap H^{+}(O)}  \,deg(Z).
$$
By Lemma \ref{proj_lem} we have
$$\inf_{Z \in H^{+}(R) \cap H^{+}(O)} \, deg(Z) \, = (n+1)\sup\,\Bigl\{\,r\,|\,\triangle(\pi_0(R),r) \cap \triangle(O,r+\frac{deg(R)}{n+1})=\emptyset \Bigr \}+deg(R).$$

Now, for two points $P$ and $Q$ in $\mathbb R^{n+1}$, letting $R=P-Q$ in the above formula and applying Lemma \ref{trans_lem}, we obtain the relation given in the proposition.
\end{proof}

The function $d^{+}_k$ is motivated naturally by the definition of the $deg^{+}$ function but is not very handy for geometric as well as computational reasons. 
In the following, we will obtain a more convenient representation of $d^{+}_k$. In fact,  $d^{+}_k$ is closely related to the following family of polytopes: 
For a point $P \in H_0$ and $m,n>0$, let $\mathcal{P}_{m,n}(P)=(\triangle(O,m) \oplus_{Mink} \bar{\triangle}(O,n))+P$, where $\oplus_{Mink}$ denotes the Minkowski sum. 
Note that we use the notation $\oplus$ for the tropical maximum sum.

\begin{lemma} For any positive real numbers $m,~n$, $\mathcal{P}_{m,n}(P)$ is a convex polytope.\end{lemma}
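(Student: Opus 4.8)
The plan is to unwind the definition and then invoke the standard facts that positive scalings, negations, Minkowski sums, and translations of polytopes are again polytopes. First I would note that, by the definition of the simplex $\triangle$ and the dilation/translation convention $\mathcal{P}(p,\lambda)=p+\lambda.\mathcal{P}$ of Section~\ref{polydistfunc_sect}, we have $\triangle(O,m)=m.\triangle=\mathrm{conv}\{m\,t_0,\dots,m\,t_n\}$ and $\bar{\triangle}(O,n)=n.\bar{\triangle}=-n.\triangle=\mathrm{conv}\{-n\,t_0,\dots,-n\,t_n\}$. Since $m,n>0$ these are genuine (non-degenerate) polytopes, each lying in the hyperplane $H_0$ because every $t_i$ lies in $H_0$ and $H_0$ is a linear subspace, hence closed under scaling.

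Next I would recall that the Minkowski sum of two polytopes is a polytope: $A\oplus_{Mink}B$ is the image of the polytope $A\times B$ under the linear map $(x,y)\mapsto x+y$, and a linear image of a polytope is a polytope; equivalently $A\oplus_{Mink}B=\mathrm{conv}\{a+b\,|\,a\in\mathrm{vert}(A),\,b\in\mathrm{vert}(B)\}$, a convex hull of finitely many points. Applying this with $A=\triangle(O,m)$ and $B=\bar{\triangle}(O,n)$ gives that $\triangle(O,m)\oplus_{Mink}\bar{\triangle}(O,n)=\mathrm{conv}\{\,m\,t_i-n\,t_j\,|\,0\le i,j\le n\,\}$ is a polytope, again contained in $H_0$ since $H_0$, being a linear subspace, is closed under Minkowski sums of its subsets. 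Finally, translation by the fixed vector $P$ is an affine isomorphism, so $\mathcal{P}_{m,n}(P)=(\triangle(O,m)\oplus_{Mink}\bar{\triangle}(O,n))+P$ is a polytope, and in particular convex and bounded, as claimed.

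There is essentially no obstacle here; the only point deserving a word of care is that all the sets in play live in the affine hyperplane $H_0$ rather than in $\mathbb{R}^{n+1}$, so ``polytope'' should be read relative to $H_0$ (equivalently, after identifying $H_0$ with $\mathbb{R}^n$), and I would add a sentence making this explicit. If one prefers to avoid citing the ``linear image of a polytope'' fact, convexity can be checked directly: for $x_1=a_1+b_1$, $x_2=a_2+b_2$ with $a_i\in\triangle(O,m)$, $b_i\in\bar{\triangle}(O,n)$ and $\lambda\in[0,1]$, one has $\lambda x_1+(1-\lambda)x_2=(\lambda a_1+(1-\lambda)a_2)+(\lambda b_1+(1-\lambda)b_2)\in\triangle(O,m)\oplus_{Mink}\bar{\triangle}(O,n)$ by convexity of the two summands, while boundedness and the finite-generation needed for ``polytope'' are immediate from the explicit vertex-sum description above.
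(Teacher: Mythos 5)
Your proof is correct and follows the same route as the paper's: the Minkowski sum of two convex polytopes is a convex polytope, and a translate of a convex polytope is a convex polytope. You simply supply more detail (the explicit vertex description and the remark about working relative to $H_0$), which the paper leaves implicit.
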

\begin{proof}
Using the fact that Minkowski sum of two convex polytopes is a convex polytope and hence, 
$\mathcal{P}_{m,n}(O)$ is a convex polytope. Indeed translates of a convex polytope is also a convex polytope and hence,
$\mathcal{P}_{m,n}(P)$ is also a convex polytope.
\end{proof}

\begin{lemma}\label{degdistfunc_lem}
For points $P$ and $Q$ in $H_0$ and for $k \geq 0$,  $d^{+}_k(P,Q)= \inf\{r|~ Q \in (\triangle(O,r) \oplus_{Mink} \bar{\triangle}(O,r+k))+P\}$, where $O$ is the origin.
\end{lemma}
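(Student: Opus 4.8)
The plan is to convert the defining condition of $d^{+}_k$ --- emptiness of the intersection of two dilated simplices $\triangle(P,r)$ and $\triangle(Q,r+k)$ --- into a single membership condition in the Minkowski-sum polytope $\mathcal{P}_{r,r+k}(P) = (\triangle(O,r)\oplus_{Mink}\bar\triangle(O,r+k))+P$, and then to use monotonicity in the parameter $r$ to pass from a supremum over a ``down-set'' to the infimum over its complementary ``up-set''.

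First I would prove the following pointwise equivalence, for each fixed $r\geq 0$: $\triangle(P,r)\cap\triangle(Q,r+k)\neq\emptyset$ if and only if $Q\in \mathcal{P}_{r,r+k}(P)$. For ``$\Rightarrow$'', suppose $X$ lies in both simplices. Since the center (centroid) of $\triangle$ is $O$, we have $\triangle(P,r)=P+r\triangle=P+\triangle(O,r)$, so we may write $X=P+y$ with $y\in\triangle(O,r)$; likewise $X\in Q+(r+k)\triangle$ gives $X-Q\in (r+k)\triangle$, hence $Q-X\in -(r+k)\triangle=(r+k)\bar\triangle=\bar\triangle(O,r+k)$. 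Writing $z:=Q-X$ we get $Q=P+y+z$ with $y\in\triangle(O,r)$ and $z\in\bar\triangle(O,r+k)$, i.e. $Q\in\mathcal{P}_{r,r+k}(P)$. The direction ``$\Leftarrow$'' is the same computation read backwards: from $Q=P+y+z$ with $y\in\triangle(O,r)$, $z\in\bar\triangle(O,r+k)$, set $X:=P+y\in\triangle(P,r)$; then $X-Q=-z\in -\bar\triangle(O,r+k)=(r+k)\triangle$, so $X\in Q+(r+k)\triangle=\triangle(Q,r+k)$, and $X$ is a common point. This step is purely bookkeeping with the identity $\bar\triangle=-\triangle$ and the definition of $\oplus_{Mink}$.

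Next I would record the monotonicity. Because $\triangle$ (and hence $\bar\triangle=-\triangle$) is convex and contains $O$, convexity gives $\lambda\triangle\subseteq\lambda'\triangle$ for $0\le\lambda\le\lambda'$; since Minkowski sum and translation preserve inclusion, $\mathcal{P}_{r,r+k}(P)\subseteq\mathcal{P}_{r',r'+k}(P)$ whenever $0\le r\le r'$. Therefore the set $B:=\{\,r\ge 0\mid Q\in\mathcal{P}_{r,r+k}(P)\,\}$ is upward closed in $[0,\infty)$, and by the equivalence of the previous paragraph its complement $A:=\{\,r\ge 0\mid\triangle(P,r)\cap\triangle(Q,r+k)=\emptyset\,\}$ is downward closed. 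Since $A$ and $B$ partition $[0,\infty)$ with $A$ a down-set and $B$ an up-set, setting $r^{*}:=\inf B$ we get $(r^{*},\infty)\subseteq B$ (so $\sup A\le r^{*}$) and $[0,r^{*})\subseteq A$ (so $\sup A\ge r^{*}$), whence $\sup A=r^{*}=\inf B$. The left-hand side is by definition $d^{+}_k(P,Q)$ and the right-hand side is $\inf\{\,r\mid Q\in(\triangle(O,r)\oplus_{Mink}\bar\triangle(O,r+k))+P\,\}$, which is exactly the asserted identity.

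I expect the only delicate point to be the last step's boundary and degeneracy analysis --- checking that the threshold value $r^{*}$, which lies in exactly one of $A,B$, creates no gap between $\sup A$ and $\inf B$, and that the degenerate cases (the intersection empty for all $r$, or nonempty for all $r$) are covered under the usual conventions for $\sup\emptyset$ and $\inf\emptyset$ --- together with making explicit the normalization that the reference center of $\triangle$ is $O$, so that $\lambda\triangle=\triangle(O,\lambda)$ and the simplices are genuinely translates of scalings of $\triangle$. The geometric content, and the part that does the real work, is the pointwise Minkowski-sum reformulation of the first step; everything after that is soft order-theoretic reasoning.
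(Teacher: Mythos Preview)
Your proof is correct and follows essentially the same approach as the paper: both establish the pointwise equivalence ``$\triangle(P,r)\cap\triangle(Q,r+k)\neq\emptyset$ iff $Q\in\mathcal{P}_{r,r+k}(P)$'' via the identity $\bar\triangle=-\triangle$, and then pass to the sup/inf statement. Your version is in fact slightly more careful than the paper's, since you make the monotonicity in $r$ explicit and use it to justify $\sup A=\inf B$, whereas the paper tacitly assumes that at $r_0=d^{+}_k(P,Q)$ the intersection is already nonempty.
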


\begin{proof}
By definition $d^{+}_k(P,Q)=sup\{r|~\triangle(P,r) \cap \bar{\triangle}(Q,r+k)=\emptyset\}$. Let $r_0=d^{+}_k(P,Q)$
and consider a point $R$ in the intersection of $\triangle(P,r_0)$ and $\bar{\triangle}(Q,r_0+k)$. 
Rephrasing  $d_{\triangle}(P,R)=r_0$ and $d_{\triangle}(Q,R)=d_{\bar{\triangle}}(R,Q)=r_0+k$.
This implies that $R-P \in \triangle(O,r_0)$ and $Q-R \in \bar{\triangle}(O,r_0+k)$. 
This shows that $Q-P \in \triangle(O,r_0) \oplus_{Mink} \bar{\triangle}(O,r_0+k)$ and we obtain 
$Q \in (\triangle(O,r_0) \oplus_{Mink} \bar{\triangle}(O,r_0+k))+P$. Hence, $d^{+}(P,Q) \geq \inf\{r|~ Q \in (\triangle(O,r) \oplus_{Mink} \bar{\triangle}(O,r+k))+P\}$.

Furthermore, if $Q$ is contained in  $(\triangle(O,r) \oplus_{Mink} \bar{\triangle}(O,r+k))+P$ 
then there exists a point $R=R_1+R_2$ such that $R_1 \in \triangle(O,r) $
and $R_2 \in  \bar{\triangle}(O,r+k)$ with $Q=R_1+R_2+P$ and we take $R_3$ such that $R_3=Q-R_2=R_1+P$.
Therefore, the point $R_3$ is contained in both  $\triangle(Q,r+k)$ and $\triangle(P,r)$
and we obtain $\inf\{r|~ Q \in (\triangle(O,r) \oplus_{Mink} \bar{\triangle}(O,r+k))+P\} \geq d^{+}_k(P,Q)$.
This concludes the proof.

\end{proof}

As a corollary we obtain a handy characterisation of the polytope $\mathcal{P}_{m,n}$:

\begin{corollary}\label{poly_cor} Let $P,~Q$ be points in $H_0$, a point $Q$ belongs to the  polytope $\mathcal{P}_{r,r+d}(P)$ 
if and only if  $deg^{+}(P+\frac{d(1,\dots,1)}{n+1}-Q) \leq r(n+1)$.\end{corollary}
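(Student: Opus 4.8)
The plan is to read Corollary \ref{poly_cor} as the concatenation of the two lemmas immediately preceding it. Lemma \ref{degdistfunc_lem} already identifies $d^{+}_{d}(P,Q)$ with the infimal scale $r$ at which $Q$ enters the polytope $\mathcal{P}_{r,r+d}(P)$, while Lemma \ref{dplus_lem} converts $d^{+}$ back into $deg^{+}$. So the proof should proceed by (a) replacing the membership statement ``$Q\in\mathcal{P}_{r,r+d}(P)$'' by the inequality ``$r\ge d^{+}_{d}(P,Q)$'', and then (b) substituting the value of $d^{+}_{d}(P,Q)$ produced by Lemma \ref{dplus_lem} and rearranging.

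For step (a), I would first observe that the family $\{\mathcal{P}_{r,r+d}(P)\}_{r\ge 0}$ is nested and increasing in $r$: since the centre $O$ lies in both $\triangle$ and $\bar{\triangle}$, a dilation of $\triangle$ (resp.\ $\bar{\triangle}$) by a larger factor contains the dilation by a smaller factor, and both the Minkowski sum and the translation by $P$ preserve inclusions, so $\mathcal{P}_{r_1,r_1+d}(P)\subseteq\mathcal{P}_{r_2,r_2+d}(P)$ whenever $0\le r_1\le r_2$. Each $\mathcal{P}_{r,r+d}(P)$ is compact, and a routine limiting argument (extract convergent subsequences of the two Minkowski summands) shows that $\{r\ge 0 : Q\in\mathcal{P}_{r,r+d}(P)\}$ is closed; together with the monotonicity and with Lemma \ref{degdistfunc_lem} this set is exactly the ray $[d^{+}_{d}(P,Q),\infty)$. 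Hence $Q\in\mathcal{P}_{r,r+d}(P)$ if and only if $r\ge d^{+}_{d}(P,Q)$.

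For step (b), I would apply Lemma \ref{dplus_lem} to the lifts $\widetilde P=P+d(1,\dots,1)$ and $\widetilde Q=Q$. Since $P,Q\in H_{0}$ and $(1,\dots,1)$ is normal to $H_{0}$, we have $\pi_{0}(\widetilde P)=P$ and $\pi_{0}(\widetilde Q)=Q$; moreover $deg(\widetilde P-\widetilde Q)=(n+1)d\ge 0$, so the hypothesis $deg(\widetilde P)\ge deg(\widetilde Q)$ of Lemma \ref{dplus_lem} holds and the parameter there is $k=deg(\widetilde P-\widetilde Q)/(n+1)=d$. Lemma \ref{dplus_lem} then gives $deg^{+}(P+d(1,\dots,1)-Q)=(n+1)\,d^{+}_{d}(P,Q)+(n+1)d$, i.e.\ $d^{+}_{d}(P,Q)=\tfrac{1}{n+1}deg^{+}(P+d(1,\dots,1)-Q)-d$. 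Feeding this into the equivalence of step (a) and clearing denominators yields the asserted characterisation. The only genuinely non-formal point is the attainment of the infimum in Lemma \ref{degdistfunc_lem}, i.e.\ the closedness argument in step (a) -- without it one only gets the one-sided implication ``$r>d^{+}_{d}(P,Q)\Rightarrow Q\in\mathcal{P}_{r,r+d}(P)$'' -- together with the bookkeeping of the factor $n+1$ and the additive $(1,\dots,1)$-shift, which must be carried consistently through the last rearrangement.

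A completely self-contained alternative avoids $d^{+}$ entirely, and I would record it as a remark. Using the explicit description of the simplicial distance on $H_{0}$, one has $\triangle(O,r)=\{z\in H_{0}:z_{j}\ge -r \text{ for all } j\}$ and $\bar{\triangle}(O,r+d)=\{w\in H_{0}:w_{j}\le r+d \text{ for all } j\}$, so $Q-P$ lies in their Minkowski sum precisely when there is $b\in H_{0}$ with $b_{j}\le\min\{r+d,\,(Q-P)_{j}+r\}$ for every $j$; such a $b$ exists if and only if $\sum_{j}\min\{r+d,\,(Q-P)_{j}+r\}\ge 0$. Expanding the left-hand side as $(n+1)r+(n+1)d-deg^{+}(P+d(1,\dots,1)-Q)$ gives the corollary directly, and this computation also serves as the sanity check on the constants mentioned above.
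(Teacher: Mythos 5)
Your route is the same as the paper's: read the membership $Q\in\mathcal{P}_{r,r+d}(P)$ as $r\ge d^{+}_{d}(P,Q)$ via Lemma \ref{degdistfunc_lem}, then translate $d^{+}_{d}$ into $deg^{+}$ via Lemma \ref{dplus_lem}. Your step (a) is in fact more careful than the paper's converse direction, which passes from $d^{+}_{d}(P,Q)\le r$ to $Q\in\mathcal{P}_{r,r+d}(P)$ without addressing attainment of the infimum; your monotonicity-plus-closedness argument is exactly what is needed there, and the elementary computation in your last paragraph is a genuinely useful independent check.

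There is, however, one point you cannot leave as stated: the inequality you actually derive, $deg^{+}(P+d(1,\dots,1)-Q)\le (n+1)(r+d)$, is \emph{not} the inequality in the corollary, which reads $deg^{+}(P+\frac{d(1,\dots,1)}{n+1}-Q)\le r(n+1)$, and the two are not equivalent. (Take $n=1$, $P=Q=O$, $d=2$, $r=0$: then $Q=O\in\bar{\triangle}(O,2)=\mathcal{P}_{0,2}(O)$, your inequality reads $4\le 4$ and holds, while the printed one reads $deg^{+}((1,1))=2\le 0$ and fails.) So you may not claim that your computation ``gives the corollary directly.'' The discrepancy lies in the paper rather than in your algebra: the paper's proof invokes Lemma \ref{dplus_lem} in the form $deg^{+}(P+\frac{d(1,\dots,1)}{n+1}-Q)=(n+1)d^{+}_{d}(P,Q)$, whereas for the lift $\widetilde P=P+\frac{d}{n+1}(1,\dots,1)$ that lemma actually produces the subscript $k=deg(\widetilde P-Q)/(n+1)=d/(n+1)$, not $d$, and carries the additive term $deg(\widetilde P-Q)=d$. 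Your lift $\widetilde P=P+d(1,\dots,1)$ is the one that yields the subscript $d$ matching Lemma \ref{degdistfunc_lem}, and your form of the inequality is the correct one; you should say explicitly that you are proving a corrected statement, and note that the correction has to be traced through Theorem \ref{geominter_theo} and Lemma \ref{sepora_lem}, which cite this corollary.
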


\begin{proof}
Let a point $Q$ be contained in $\mathcal{P}_{r,r+d}(P)$, then $\inf\{r'|~ Q \in (\triangle(O,r') \oplus_{Mink} \bar{\triangle}(O,r'+d))+P\} \leq r$
and hence by Lemma \ref{degdistfunc_lem} we know that $r \geq \inf\{r'|~ Q \in (\triangle(O,r') \oplus_{Mink} \bar{\triangle}(O,r'+d))+P\}=d^{+}_d(P,Q)$.
By Lemma \ref{dplus_lem}, we know that $deg^{+}(P+\frac{d(1,\dots,1)}{n+1}-Q)=(n+1)d^{+}_{d}(P,Q)$. Hence, $deg^{+}(P+\frac{d(1,\dots,1)}{n+1}-Q) \leq r(n+1)$.
Conversely, if a point $Q$ satisfies $deg^{+}(P+\frac{d(1,\dots,1)}{n+1}-Q) \leq r(n+1)$ then, $d^{+}_{d}(P,Q) \leq r$ and hence, 
$r \geq \inf\{r'|~ Q \in (\triangle(O,r') \oplus_{Mink} \bar{\triangle}(O,r'+d))+P\}$ and hence $Q \in \mathcal{P}_{r,r+d}(P)$.
\end{proof} 

Now putting together, the formula for rank in Theorem \ref{rankform_theo}, Lemma \ref{dplus_lem} and Lemma \ref{degdistfunc_lem} we obtain the following geometric 
interpretation of rank:

\begin{theorem}\label{geominter_theo}({\bf A Geometric Interpretation of rank}) Consider a divisor $D$ of degree $d$ between zero and $g-1$, then $D$ has rank $r_0-1$ if and only if $\pi_0(D)$ is 
contained in the boundary of the arrangement $\cup_{c \in \text{Crit}_{\triangle}(L_G)}\mathcal{P}_{r_1,r_2}(c)$ where $r_1=r_0/(n+1)$ and $r_2=(r_0+g-1-d)/(n+1)$.\end{theorem}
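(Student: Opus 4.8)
The plan is to assemble the geometric interpretation directly from the three ingredients the section has already built: the extremal formula for rank (Theorem \ref{rankform_theo}), the reduction of $\deg^+$ to the degree-plus distance $d^+_k$ (Lemma \ref{dplus_lem}), and the polytope characterisation of $d^+_k$ via the Minkowski-sum polytopes $\mathcal{P}_{m,n}$ (Lemma \ref{degdistfunc_lem} and Corollary \ref{poly_cor}). First I would start from Theorem \ref{rankform_theo}, which says $r(D) = \min_{\nu \in \mathrm{Ext}(L_G)} \deg^+(D-\nu) - 1$. So $r(D) = r_0 - 1$ is equivalent to: $\deg^+(D-\nu) \geq r_0$ for every $\nu \in \mathrm{Ext}(L_G)$, and $\deg^+(D-\nu) = r_0$ for at least one such $\nu$. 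The goal is to translate each of these $\deg^+$ conditions into a statement about $\pi_0(D)$ relative to the polytopes $\mathcal{P}_{r_1,r_2}(c)$, $c \in \mathrm{Crit}_\triangle(L_G)$.

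The key step is the translation of a single inequality $\deg^+(D-\nu) \geq r_0$. Here I would use the degree bookkeeping: $\deg(D-\nu) = \deg(D) - \deg(\nu)$, and recall that every $\nu_\pi$ (hence every $\nu \in \mathrm{Ext}(L_G)$ modulo $L_G$) has $\deg(\nu_\pi) = \sum_i(\mathrm{indeg}_\pi(v_i)-1) = m - (n+1) = g-1$, so $\deg(D-\nu) = d - (g-1)$, which may be negative; to apply Corollary \ref{poly_cor} as stated (which wants $P + \frac{d(1,\ldots,1)}{n+1} - Q$ with the shift matching the degree difference $P$ minus $Q$ in the right direction) I would be careful about the sign of $\deg(P) - \deg(Q)$ and possibly apply Lemma \ref{dplus_lem} in the form with $Q = D$, $P = \nu$ or vice versa, whichever has the larger degree; since $d \le g-1 = \deg(\nu)$ we are in the case $\deg(\nu) \geq \deg(D)$. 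Then Lemma \ref{dplus_lem} gives $\deg^+(\nu - D) = (n+1)\,d^+_{k}(\pi_0(\nu),\pi_0(D)) + \deg(\nu - D)$ with $k = \frac{\deg(\nu-D)}{n+1} = \frac{g-1-d}{n+1}$ — but the formula in Theorem \ref{rankform_theo} involves $\deg^+(D-\nu)$, not $\deg^+(\nu-D)$, so I must account for the identity relating the two via the Riemann–Roch symmetry already used to set up the degree range, or more directly observe that $\deg^+(D-\nu) = \deg^+(\nu-D) + \deg(D-\nu)$ is false in general; instead I'd invoke that in this section we have already reduced (Corollary \ref{rankgminusone_cor} and the surrounding discussion) to writing everything through $\pi_0$, and use Corollary \ref{poly_cor} with $P = c \in \mathrm{Crit}_\triangle(L_G) = \pi_0(\mathrm{Ext}(L_G))$, $d \rightsquigarrow g-1-d$, and $r \rightsquigarrow r_1 = r_0/(n+1)$: then $Q = \pi_0(D) \in \mathcal{P}_{r_1, r_1 + (g-1-d)/(n+1)}(c) = \mathcal{P}_{r_1,r_2}(c)$ if and only if $\deg^+\bigl(c + \frac{(g-1-d)(1,\ldots,1)}{n+1} - \pi_0(D)\bigr) \leq r_0$, and the argument of $\deg^+$ here is exactly $\pi_0(\nu) + \frac{(g-1-d)(1,\ldots,1)}{n+1} - \pi_0(D)$, which is the $\pi_0$-image of $\nu - D$ after accounting for the degree shift, i.e. up to adding a multiple of $(1,\ldots,1)$ it equals $\nu - D$, and $\deg^+$ of a vector only depends on it through coordinates where it is positive — here I'd need the precise lemma that lifting to the correct degree hyperplane recovers $\deg^+(D-\nu)$; this is where I expect the main obstacle, namely getting the direction of the distance and the sign of the constant $k$ exactly right so that $d^+_k$ is applied with $k \geq 0$ as Lemma \ref{degdistfunc_lem} requires.

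Granting that bookkeeping, the rest is a clean logical repackaging. The condition ``$\deg^+(D-\nu) \geq r_0$ for all $\nu$'' becomes ``$\pi_0(D) \notin \mathrm{int}\,\mathcal{P}_{r_1,r_2}(c)$ for all $c \in \mathrm{Crit}_\triangle(L_G)$'', i.e. $\pi_0(D)$ lies outside the open union $\bigcup_c \mathrm{int}\,\mathcal{P}_{r_1,r_2}(c)$; and the condition ``$\deg^+(D-\nu) = r_0$ for some $\nu$'' becomes ``$\pi_0(D) \in \mathcal{P}_{r_1,r_2}(c)$ for some $c$'', i.e. $\pi_0(D)$ lies in the closed union. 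A point that is in the closed union but in no interior is by definition a point of the boundary of the arrangement $\bigcup_{c \in \mathrm{Crit}_\triangle(L_G)}\mathcal{P}_{r_1,r_2}(c)$. Conversely, a boundary point of the arrangement lies in some closed $\mathcal{P}_{r_1,r_2}(c)$ but in the interior of none, which gives back exactly the two conditions characterising $r(D)=r_0-1$. The only subtlety to flag is that ``boundary of the arrangement'' must mean the boundary of the set-theoretic union (a point in the closure but not in the interior of the union), not merely a point on the topological boundary of some individual $\mathcal{P}_{r_1,r_2}(c)$; with that reading the equivalence is immediate, and I would state it explicitly to avoid ambiguity. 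I would close by noting that since $\mathrm{Crit}_\triangle(L_G)$ is finite (it is $\pi_0$ of finitely many $-\nu_\pi$ modulo $L_G$, or rather its relevant representatives), the arrangement is a genuine finite union of convex polytopes and its boundary is well-defined.
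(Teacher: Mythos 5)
Your proposal is correct and follows exactly the route the paper intends: the paper offers no detailed argument beyond ``putting together'' Theorem \ref{rankform_theo}, Lemma \ref{dplus_lem} and Lemma \ref{degdistfunc_lem} (via Corollary \ref{poly_cor}), which is precisely the assembly you carry out, including the reading of ``boundary of the arrangement'' as membership in the closed union but not in its interior. Your explicit flagging of the degree/sign bookkeeping in applying Lemma \ref{dplus_lem} and Corollary \ref{poly_cor} is warranted, since the paper's own normalisations there are stated loosely, and your version is if anything more careful than the source.
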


\begin{remark}
The fact that $\mathcal{P}_{r_1,r_1+k}(c) \subseteq \mathcal{P}_{r_2,r_2+k}(c)$ if $r_1 \leq r_2$ is implicit in the statement of Theorem \ref{geominter_theo}.
 \end{remark}



\subsection{Computing the rank for divisors of degree between zero and $g-1$}

We now give an algorithm for computing the rank that runs in polynomial time for a fixed number of vertices. The algorithm uses two main ingredients: 

\begin{enumerate}

\item The geometric interpretation of rank (Theorem \ref{geominter_theo}).

\item Reduction to the algorithm for integer programming by Kannan \cite{Kan87}:  

\end{enumerate}

For the sake of exposition, we first consider the slightly easier case of divisors with 
degree exactly $g-1$.  We employ Theorem \ref{rankgminusone_cor} to obtain the following algorithm:

\begin{algorithm}
\begin{enumerate}
\item For each permutation $\pi \in S_{n+1}$, we compute $\min_{q \in L_G} \ell_1(\pi_0(D)-q)/2-1$ using Kannan's algorithm. The $\ell_1$ unit ball is given to Kannan's algorithm
 as a separation oracle (we will provide an efficient implementation of the separation oracle in Lemma \ref{sepora_lem}).
\item We minimise over all permutations $\pi$.
\end{enumerate}
\end{algorithm}

We now turn to the general case: we start with the geometric interpretation for rank and we would like to reduce the problem to the integer programming problem.
 We construct a preliminary algorithm as follows: 

\begin{algorithm}\label{geom_algo}
\begin{enumerate}
\item Find the smallest integer $r$ such that $\pi_0(D)$ is contained in $\cup_{c \in \text{Crit}_{\triangle}(L_G)}\mathcal{P}_{r_1,r_2}(c)$ where $r_1=\frac{r}{n+1},~r_2=\frac{r+g-1+d}{n+1}$
by testing for all values of $r$ from zero to $g-1$.  

\end{enumerate}
\end{algorithm}

Using the fact that the degree of the divisor is between zero and $g-1$, the algorithm would run in time $O(g\cdot 2^{O(n \log n)} \cdot poly(\text{size}(G)))$.  
Since $g$ is not polynomially bounded in the size of the input, the algorithm does not run in polynomial time for fixed values of $n$. We resolve this 
problem by performing a binary search over the parameter $r$ in the polytope $\mathcal{P}_{r_1,r_2}(c)$ and apply Kannan's algorithm at each step 
of the binary search. Since we know from Subsection \ref{simp_subsect}, 
that the rank of the divisor is at most $g-1$ the algorithm terminates in $O(2^{n \log n} poly(\text{size}(G)))$. Here is a formal description of the algorithm:

\begin{algorithm}\label{geombin_algo}

\begin{enumerate}

\item For each permutation $\pi \in S_{n+1}$, use binary search on the parameter $r$ along with Kannan's algorithm to test if $\pi_0(D)$ is contained 
in  $\cup_{q\in L_G}\mathcal{P}_{r,r+g-1-d}(c_{\pi}+q)$, the polytope is presented to Kannan's algorithm as a separation oracle.
 
\item Repeat over all permutations $\pi$.

\end{enumerate}

\end{algorithm}

The straightforward way of presenting the polytope  $\mathcal{P}_{r,r+k}$ to Kannan's algorithm is  
in terms of its facets. But since the number of facets of $\mathcal{P}_{r,r+k}$ 
is $2^{n+1}$, the factor depending on $n$ in the time complexity of the algorithm becomes larger than $2^{n \log n}$.
Hence, we present the polytope $\mathcal{P}_{r,r+k}$ by a separation oracle to Kannan's algorithm 
and the following efficient implementation of the separation oracle ensures that the 
algorithm runs in time $2^{O(n \log n)}poly(\text{size}(G))$.

\begin{lemma}({\bf A separation oracle for the polytope $\mathcal{P}_{m,n}$})\label{sepora_lem}
There is a polynomial time separation oracle for the polytope $\mathcal{P}_{r,r+d}$ i.e., given any point $p$ there is a polynomial time (in the bit length of $p$ and the vertex 
description of $\mathcal{P}_{r,r+d}$) algorithm that either decides that $p$ is contained in $\mathcal{P}_{r,r+d}$  or outputs a hyperplane separating the point $p$ and the polytope
$\mathcal{P}_{r,r+d}$.
 \end{lemma}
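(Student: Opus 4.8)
The plan is to convert membership in $\mathcal{P}_{r,r+d}$ into a single convex inequality by way of Corollary~\ref{poly_cor}, and then to read off a separating hyperplane as a subgradient of the piecewise-linear convex function $deg^{+}$.

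First I would dispose of the affine-hull issue. Since $\triangle$ and $\bar{\triangle}$ lie in $H_0$ and the center $c$ of the polytopes we care about also lies in $H_0$, the polytope $\mathcal{P}_{r,r+d}(c)$ is contained in $H_0$. So, given a query point $p$, I first check whether $p\in H_0$, i.e.\ whether $deg(p)=0$; if not, one of the two closed halfspaces bounded by $\{x\,|\,\sum_i x_i = 0\}$ contains $\mathcal{P}_{r,r+d}(c)$ but not $p$, and we are done. From here on assume $p\in H_0$.

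The heart of the matter is Corollary~\ref{poly_cor}. Put $x_0 := c + \frac{d(1,\dots,1)}{n+1} - p$; then $p\in\mathcal{P}_{r,r+d}(c)$ if and only if $deg^{+}(x_0) \le r(n+1)$, and $deg^{+}(x_0) = \sum_i \max\{(x_0)_i,0\}$ is computable with $O(n)$ arithmetic operations on numbers of polynomial bit length. If the inequality holds, report membership. Otherwise let $S = \{\,i \mid (x_0)_i > 0\,\}$ and $s = \sum_{i\in S} e_i$. The function $deg^{+}$ is convex, and $s$ is a subgradient of it at $x_0$ — its $i$-th coordinate equals $1$ on $S$ and $0$ off $S$, which is admissible because $(x_0)_i \le 0$ for $i \notin S$ — so for every $q\in H_0$ the subgradient inequality gives $deg^{+}\!\bigl(c + \frac{d(1,\dots,1)}{n+1} - q\bigr) \ge deg^{+}(x_0) + s\cdot(p - q)$. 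Since $s\cdot x_0 = \sum_{i\in S}(x_0)_i = deg^{+}(x_0)$, one rearranges this, using Corollary~\ref{poly_cor} again for the point $q$, into the following: for all $q\in\mathcal{P}_{r,r+d}(c)$ we have $s\cdot q \ge s\cdot\!\bigl(c + \frac{d(1,\dots,1)}{n+1}\bigr) - r(n+1)$, whereas $s\cdot p = s\cdot\!\bigl(c + \frac{d(1,\dots,1)}{n+1}\bigr) - deg^{+}(x_0) < s\cdot\!\bigl(c + \frac{d(1,\dots,1)}{n+1}\bigr) - r(n+1)$ because we are in the case $deg^{+}(x_0) > r(n+1)$. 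Hence the hyperplane $\bigl\{x \bigm| \sum_{i\in S} x_i = \sum_{i\in S}\!\bigl(c_i + \frac{d}{n+1}\bigr) - r(n+1)\bigr\}$ strictly separates $p$ from $\mathcal{P}_{r,r+d}(c)$, and it has been produced in polynomial time.

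I expect the only delicate point to be the bookkeeping around the subgradient: checking that $s$ built from the positive-coordinate set of $x_0$ is indeed a valid subgradient of $deg^{+}$ at $x_0$, and correctly carrying the subgradient inequality (naturally stated in the variable $x_0$) back to a halfspace in the query variable. Everything else is immediate from Corollary~\ref{poly_cor}: the reduction to one $deg^{+}$-inequality, the $O(n)$ running time, and the polynomial bit-length bound. It is worth remarking that the argument never uses full-dimensionality of $\mathcal{P}_{r,r+d}(c)$, so the oracle remains correct in degenerate cases (for instance, when $r=d=0$ the polytope is a single point).
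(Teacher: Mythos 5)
Your proposal is correct and follows essentially the same route as the paper: both reduce membership to the single inequality $deg^{+}\bigl(c+\tfrac{d(1,\dots,1)}{n+1}-p\bigr)\le r(n+1)$ via Corollary~\ref{poly_cor} and take the indicator vector of the positive coordinates as the normal of the separating hyperplane (the paper places the offset at the midpoint $\tfrac{(r+r')(n+1)}{2}$ and argues by contradiction, while you place it at $r(n+1)$ and justify it via the subgradient inequality --- a cleaner packaging of the same computation). The extra affine-hull check for $p\in H_0$ is a harmless refinement the paper omits.
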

\begin{proof}
Given a point $Q$, compute the function $r'=deg^{+}(P+\frac{d(1,\dots,1)}{n+1}-Q)/(n+1)$ and if $r' \leq r$ then $Q$ is contained in $\mathcal{P}_{r,r+d}(P)$ and
otherwise let $S^{+}$ be the set of indices such that $P_i+d(1,\dots,1)/{n+1}>0$, 
output the hyperplane $H_S$: $\sum_{x_i \in S^{+}}(-x_i+P_i+d(1,\dots,1)/{n+1}) \leq \frac{(r'+r)(n+1)}{2}$ as a separating hyperplane.   To show that $H_S$ is a separating hyperplane, assume the contrary and since $\sum_{x_i \in S^{+}}(-x_i+P_i+\frac{d(1,\dots,1)}{n+1})=r'(n+1)>\frac{(r+r')(n+1)}{2}$ 
there is a point $Q$ in $\mathcal{P}_{m,n}$ such that $\sum_{Q_i \in S^{+}}(P_i+\frac{d(1,\dots,1)}{n+1}-Q_i) \geq \frac{(n+1)(r'+r)}{2}$.
 We know that  for the point $Q^L=Q-\frac{d(1,\dots,1)}{n+1}$ we have:
$\sum_{j \in S^{+}}(Q^L)_j=r'(n+1)>\frac{(r+r')(n+1)}{2}$. Hence, $deg^{+}(Q^L) \geq \frac{(r+r')(n+1)}{2}>r(n+1)$ and $deg(Q^{L})=d$, using Corollary \ref{poly_cor} we obtain a contradiction.

\end{proof}



The correctness of the algorithm is clear from Theorem \ref{geominter_theo}.

\begin{theorem} For any divisor $D$ with degree between zero to $g-1$, Algorithm \ref{geombin_algo} computes the rank of the divisor $D$.\end{theorem}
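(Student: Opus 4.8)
The plan is to verify that Algorithm~\ref{geombin_algo} correctly computes $r(D)$ by combining the geometric interpretation of rank (Theorem~\ref{geominter_theo}) with the correctness and running-time guarantees of Kannan's integer programming algorithm, fed through the separation oracle of Lemma~\ref{sepora_lem}. First I would recall that by the reduction in Subsection~\ref{simp_subsect} it suffices to decide, for each candidate value, whether $r(D)\le k$, and that the rank of a divisor of degree between $0$ and $g-1$ lies in $\{-1,0,\dots,g-1\}$, so a binary search over $r\in\{0,\dots,g-1\}$ terminates in $O(\log g)=O(\mathrm{poly}(\mathrm{size}(G)))$ rounds. The core claim to establish is: for a fixed permutation $\pi$ and a fixed parameter $r$, testing whether $\pi_0(D)\in\bigcup_{q\in L_G}\mathcal{P}_{r_1,r_2}(c_\pi+q)$ with $r_1=r/(n+1)$, $r_2=(r+g-1-d)/(n+1)$ is an instance of integer programming solvable in time $2^{O(n\log n)}\mathrm{poly}(\mathrm{size}(G))$.

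The key steps, in order, would be: (1) Invoke Theorem~\ref{geominter_theo} to reduce computing $r(D)$ to finding the smallest integer $r_0$ such that $\pi_0(D)$ lies in the arrangement $\bigcup_{c\in\mathrm{Crit}_\triangle(L_G)}\mathcal{P}_{r_1,r_2}(c)$; here I would use the nesting remark $\mathcal{P}_{r_1,r_1+k}(c)\subseteq\mathcal{P}_{r_2,r_2+k}(c)$ for $r_1\le r_2$ to justify that the predicate ``$\pi_0(D)$ lies in the $r$-dilated arrangement'' is monotone in $r$, which is precisely what makes binary search valid. (2) Observe that $\mathrm{Crit}_\triangle(L_G)=\pi_0(\mathrm{Ext}(L_G))=\{\pi_0(-\nu_\pi)+q:\pi\in S_{n+1},\,q\in L_G\}$, so the union over $c$ splits into $(n+1)!=2^{O(n\log n)}$ outer iterations over permutations $\pi$, and inside each one the remaining union $\bigcup_{q\in L_G}\mathcal{P}_{r,r+g-1-d}(c_\pi+q)$ is exactly the question of whether some lattice translate of a fixed polytope contains the point $\pi_0(D)$ — equivalently, whether the polytope $\mathcal{P}_{r,r+g-1-d}(c_\pi)-\pi_0(D)$ contains a point of $-L_G$, i.e.\ an integer programming feasibility instance over the lattice $L_G$ (after a unimodular change of coordinates sending $L_G$ to $\mathbb{Z}^n$ inside $H_0$). (3) Apply Kannan's algorithm, noting from the excerpt that it accepts the body as a separation oracle and runs in time $2^{O(n\log n)}\mathrm{poly}(|I|)$ in fixed dimension; plug in Lemma~\ref{sepora_lem} for the polynomial-time separation oracle of $\mathcal{P}_{r,r+d}$, so that the per-call cost is $2^{O(n\log n)}\mathrm{poly}(\mathrm{size}(G))$. (4) Combine: $(n+1)!$ permutations $\times$ $O(\log g)$ binary-search steps $\times$ one Kannan call of cost $2^{O(n\log n)}\mathrm{poly}(\mathrm{size}(G))$ gives total running time $2^{O(n\log n)}\mathrm{poly}(\mathrm{size}(G))$, and the minimum of $r_0$ over all $\pi$ equals $r(D)+1$ by Theorems~\ref{rankform_theo} and~\ref{geominter_theo}.

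The main obstacle I anticipate is not any single hard inequality but rather carefully matching the bookkeeping between the three layers: the formula $r(D)=\min_{\nu\in\mathrm{Ext}(L_G)}\deg^+(D-\nu)-1$, its projected/geometric reformulation via $d^+_k$ and $\mathcal{P}_{m,n}$, and the integer program actually handed to Kannan. In particular one must check that fixing $\pi$ fixes $\nu_\pi$ and that ranging $q$ over $L_G$ in the definition of $\mathrm{Ext}(L_G)$ corresponds exactly to ranging over lattice translates of $\mathcal{P}_{r,r+g-1-d}(c_\pi)$, and that the degree constraint $\deg(Q^L)=d$ used in Lemma~\ref{sepora_lem} is automatically satisfied because everything takes place inside $H_0$ after orthogonal projection (so the feasibility question is genuinely $n$-dimensional, not $(n+1)$-dimensional). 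A secondary point requiring care is verifying that the binary search is over the correct range: since $r_0-1=r(D)$ and $-1\le r(D)\le g-1$, one searches $r_0\in\{0,\dots,g\}$, and the monotonicity from the nesting remark guarantees the search finds the threshold. Once these alignments are confirmed, correctness is immediate from Theorem~\ref{geominter_theo} and the running time bound follows by multiplying the three factors noted above.
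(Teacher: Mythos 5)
Your proposal is correct and follows exactly the route the paper intends: the paper's entire justification is the one-line remark that correctness is ``clear from Theorem \ref{geominter_theo},'' and your argument simply fills in the details of that reduction (monotonicity of the arrangement in $r$ via the nesting remark, the split of $\mathrm{Crit}_{\triangle}(L_G)$ into $(n+1)!$ permutation classes plus lattice translates, and the integer-programming feasibility call to Kannan's algorithm through the separation oracle of Lemma \ref{sepora_lem}). Your bookkeeping, including $r_0=r(D)+1$ and the search range, is consistent with the paper's setup, so no further comment is needed.
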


\begin{theorem} Algorithm \ref{geom_algo} runs in time $2^{O(n \log n)}poly(\text{size}(G))$ and hence, runs in polynomial time for a fixed number of vertices. \end{theorem}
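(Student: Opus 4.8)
The plan is a running-time accounting for Algorithm \ref{geombin_algo}, the binary-search refinement of Algorithm \ref{geom_algo}: there are three nested sources of cost --- the outer loop over permutations, the binary search on the parameter $r$, and one integer-programming feasibility test at each step of that search --- and we argue that their product is $2^{O(n\log n)}\text{poly}(\text{size}(G))$, which for fixed $n$ is polynomial in $\text{size}(G)$.

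First, the outer loop ranges over $S_{n+1}$, contributing a factor $(n+1)! = 2^{O(n\log n)}$. Fix a permutation $\pi$ and set $c_\pi = \pi_0(-\nu_\pi)$; since $L_G \subseteq A_n \subseteq H_0$, the projection acts trivially on $L_G$, so by the definition of $\text{Ext}(L_G)$ we have $\text{Crit}_{\triangle}(L_G) = \{\, c_\pi + q \,:\, \pi \in S_{n+1},\, q \in L_G \,\}$. Writing $d = \deg(D)$ and $k = g-1-d$ (a fixed nonnegative integer by Subsection \ref{simp_subsect}), Theorem \ref{geominter_theo} says that $r(D) = r_0 - 1$, where $r_0$ is the least integer $r$ in the range $\{0,\dots,g\}$ for which $\pi_0(D) \in \bigcup_{\pi}\bigcup_{q \in L_G}\mathcal{P}_{r/(n+1),\,(r+k)/(n+1)}(c_\pi + q)$. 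By the monotonicity recorded in the remark after Theorem \ref{geominter_theo} (namely $\mathcal{P}_{r_1,r_1+k'}(c) \subseteq \mathcal{P}_{r_2,r_2+k'}(c)$ for $r_1 \le r_2$, applied with the fixed gap $k' = k/(n+1)$), this membership predicate is monotone in $r$, so for each $\pi$ a binary search over $r$ finds the threshold in $O(\log g)$ steps. Since $g = m - n + 1$ and the edge multiplicities of $G$ are recorded in binary, $\log g = O(\text{size}(G))$, so the binary search contributes only a polynomial factor.

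Each step of the search, for fixed $\pi$ and fixed rational $r$, asks whether there is a $q \in L_G$ with $\pi_0(D) \in \mathcal{P}_{r/(n+1),\,(r+k)/(n+1)}(c_\pi + q)$; as $L_G \subseteq H_0$, this is equivalent to asking whether the $n$-dimensional lattice $L_G$, given by $Q(G)$, meets the rational convex body $\pi_0(D) - c_\pi - \mathcal{P}_{r/(n+1),\,(r+k)/(n+1)}(O)$ inside $H_0$. Fixing an integral basis of $A_n$ turns this into an instance of integer-programming feasibility in dimension $n$ over a convex body presented by a separation oracle. By Lemma \ref{sepora_lem} the body $\mathcal{P}_{r/(n+1),\,(r+k)/(n+1)}(O)$ has a polynomial-time separation oracle, and negating it and translating by a vector of polynomially bounded bit length preserves this; hence Kannan's algorithm \cite{Kan87} decides feasibility in time $2^{O(n\log n)}\cdot\text{poly}$ of the instance size. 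All the numbers in play --- the entries of $Q(G)$, the coordinates of $\pi_0(D)$ and $c_\pi$, and the rationals $r/(n+1)$, $(r+k)/(n+1)$ --- have bit length polynomial in $\text{size}(G)$, and the points at which Kannan's algorithm calls the oracle have polynomially bounded bit length, so each feasibility test costs $2^{O(n\log n)}\text{poly}(\text{size}(G))$. Multiplying the three contributions gives $(n+1)!\cdot O(\log g)\cdot 2^{O(n\log n)}\text{poly}(\text{size}(G)) = 2^{O(n\log n)}\text{poly}(\text{size}(G))$.

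The routine bookkeeping aside, the step that deserves the most care --- and where an error would hide --- is this reduction to Kannan's algorithm: one must check that $\mathcal{P}_{r/(n+1),\,(r+k)/(n+1)}(O) \cap H_0$ is a genuine rational polytope of the intended dimension ($n$-dimensional once $r > 0$, and equal to $\bar{\triangle}(O,(r+k)/(n+1))$ when $r = 0$ and $k > 0$, the case of testing $r(D) \ge 0$), so that Kannan's algorithm indeed runs in dimension $n$, and that the separation oracle of Lemma \ref{sepora_lem}, stated in terms of the query point itself, composes correctly with the linear change to a basis of $A_n$ and with the reflection and translation. It is also worth verifying that $\nu_\pi$, hence $c_\pi$, is computable from $\pi$ and $G$ in polynomial time, which is immediate from the definition of the acyclic orientation induced by $\pi$.
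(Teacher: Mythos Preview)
Your proposal is correct and follows essentially the same approach as the paper's own proof: both account for the three multiplicative contributions --- the $(n+1)! = 2^{O(n\log n)}$ permutations, the $O(\log g)$ binary-search steps (with $\log g$ polynomial in $\text{size}(G)$), and the $2^{O(n\log n)}\text{poly}(\text{size}(G))$ cost of each call to Kannan's algorithm via the separation oracle of Lemma~\ref{sepora_lem}. Your write-up is more explicit than the paper's about the monotonicity needed for binary search and about casting the containment test as a lattice--polytope feasibility problem in dimension $n$, but the argument is the same.
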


\begin{proof}
The first step in the algorithm takes time $O(\ln(g) 2^{O(n \log n)})poly(\text{size}(G))$ 
since a separation oracle for $\mathcal{P}_{m,n}$ can be constructed in polynomial time in the size of $G$ and 
Kannan's algorithm takes $2^{O(n \log n)}poly(\text{size}(G))$ and 
we iterate $2^{n \log n}$ times.  Since $\ln(g)$ is polynomially bounded in the size of the input, 
the time complexity of the algorithm is $O( 2^{O(n \log n)} \text{poly}(\text{size}(G))$.
\end{proof}

\subsection{The Algorithm}\label{mainalgo}

We now summarise the results that we obtained in the previous section to obtain an algorithm for computing the rank
of a divisor.

\begin{algorithm}


\begin{enumerate}
{
\item If $deg(D)<0$ then, output $r(D)=-1$.

\item If $g \leq deg(D) \leq 2g-2$ then, set $D'=K-D$ and compute $r(D')$. Output $r(D)=r(D')+deg(D)-(g-1)$.

\item If $deg(D)>2g-2$, then $r(D)=deg(D)-g$.

\item If $0 \leq deg(D) \leq g-1$, then we invoke Algorithm \ref{geom_algo} to compute $r(D)$.
}
\end{enumerate}


\end{algorithm}

\begin{remark}
As described in \cite{AmiMan10}, the notion of rank of a divisor can also be defined for an arbitrary sublattice of the root lattice $A_n$. 
In such a general setting, we do not know if rank can computed in polynomial time even when the dimension of the lattice is fixed. 
In our algorithm we crucially exploit our knowledge of the extremal points and the problem with handling the general case is that we do not have an explicit description of 
the extremal points as we have in the case of Laplacian lattices. As a consequence, we do not know how to find the extremal 
points in polynomial time even when the dimension is fixed. 
\end{remark}

We will end this section by determining the vertices of the polytope $\mathcal{P}_{m,n}$.

\begin{lemma}\label{vertmin_lem} The vertices of $\mathcal{P}_{R_1,R_2}$ are of the form $w_{i,j}=R_1t_i-R_2t_j$ for $i \neq j$ where $t_0,\dots,t_n$ are the vertices of $\triangle$.\end{lemma}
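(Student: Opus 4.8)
The plan is to use the general fact that the vertices of a Minkowski sum of two polytopes are among the sums of vertices of the summands. Recall that $\mathcal{P}_{R_1,R_2}(O)=\triangle(O,R_1)\oplus_{Mink}\bar{\triangle}(O,R_2)$, and that $\triangle(O,R_1)=R_1\cdot\triangle$ has vertices $R_1 t_0,\dots,R_1 t_n$ while $\bar{\triangle}(O,R_2)=R_2\cdot\bar{\triangle}=-R_2\cdot\triangle$ has vertices $-R_2 t_0,\dots,-R_2 t_n$. Hence every vertex of $\mathcal{P}_{R_1,R_2}(O)$ is of the form $w_{i,j}=R_1 t_i - R_2 t_j$ for some $i,j\in\{0,\dots,n\}$ (translation by $P$ being irrelevant, we argue for $P=O$). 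First I would record this containment: if $v$ is a vertex of the Minkowski sum, then there is a linear functional $\ell$ maximized uniquely at $v$ over $\mathcal{P}_{R_1,R_2}(O)$; writing $v=a+b$ with $a\in\triangle(O,R_1)$, $b\in\bar{\triangle}(O,R_2)$, the functional $\ell$ must be maximized at $a$ over $\triangle(O,R_1)$ and at $b$ over $\bar{\triangle}(O,R_2)$, so $a$ and $b$ must themselves be vertices of their respective simplices (here one uses that $v$ is extreme, so the representation $v=a+b$ with $a,b$ extreme exists). This gives $v=R_1 t_i - R_2 t_j$ for some $i,j$.

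Next I would rule out the diagonal terms $i=j$: the point $R_1 t_i - R_2 t_i=(R_1-R_2)t_i$ lies on the segment between, say, $R_1 t_i - R_2 t_k$ and $R_1 t_k - R_2 t_i$ for any $k\neq i$ — indeed the midpoint of that segment is $\tfrac{R_1-R_2}{2}(t_i+t_k)$, which is not quite $(R_1-R_2)t_i$, so instead I would argue directly that $(R_1-R_2)t_i$ is a convex combination of the off-diagonal candidate vertices. Concretely, averaging $w_{i,j}=R_1t_i-R_2t_j$ over all $j\neq i$ gives $R_1 t_i - \tfrac{R_2}{n}\sum_{j\neq i}t_j = R_1 t_i + \tfrac{R_2}{n}t_i = (R_1+\tfrac{R_2}{n})t_i$, using $\sum_{j}t_j=0$ so $\sum_{j\neq i}t_j=-t_i$; this shows a positive multiple of $t_i$ is in the interior hull of the off-diagonal points, and a short scaling/convexity argument then shows $(R_1-R_2)t_i$ (which is a smaller positive multiple of $t_i$ when $R_1>R_2\geq 0$, and one handles $R_1\le R_2$ symmetrically) is a convex combination of off-diagonal $w_{k,l}$'s together with $O$ — note $O=R_1 t_i + R_2\cdot(-t_i)\cdot\tfrac{R_1}{R_2}$... rather, $O$ itself equals the average $\tfrac1{n+1}\sum_i R_1 t_i$ plus $\tfrac1{n+1}\sum_j(-R_2 t_j)$, hence $O\in\mathcal{P}_{R_1,R_2}(O)$, and $(R_1-R_2)t_i$ lies on the segment from $O$ to $\bigl(R_1+\tfrac{R_2}{n}\bigr)t_i$, the latter being a convex combination of off-diagonal $w_{i,j}$. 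So $w_{i,i}$ is never a vertex.

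Finally I would check that each off-diagonal $w_{i,j}$, $i\ne j$, actually \emph{is} a vertex, i.e. is exposed by some linear functional. For a pair $i\ne j$ one picks a functional $\ell$ with $\ell(t_i)$ strictly maximal among $\ell(t_0),\dots,\ell(t_n)$ and $\ell(t_j)$ strictly minimal; such $\ell$ exists because $t_0,\dots,t_n$ are affinely independent (they are the vertices of a full-dimensional simplex in $H_0$), so one can prescribe the values $\ell(t_k)$ freely. Then $\ell$ is uniquely maximized over $\triangle(O,R_1)$ at $R_1 t_i$ and, since it is minimized at $t_j$, uniquely maximized over $-R_2\triangle(O,1)=\bar\triangle(O,R_2)$ at $-R_2 t_j$; hence $\ell$ is uniquely maximized over the Minkowski sum at $w_{i,j}$, making it a vertex. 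The main obstacle is bookkeeping rather than conceptual: being careful with the sign/size relationship between $R_1$ and $R_2$ in the diagonal-elimination step, and making sure the affine independence of $\{t_i\}$ is invoked correctly when constructing the exposing functionals. Since the statement as phrased only claims the vertices are \emph{of the form} $w_{i,j}$ with $i\ne j$, the containment direction plus diagonal elimination already suffices, and I would present the exposing-functional direction as the remark that this list is moreover tight.
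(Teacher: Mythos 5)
Your proposal is correct, and it reaches the same three milestones as the paper (every vertex is some $w_{i,j}$; the diagonal $w_{i,i}$ is never a vertex; each off-diagonal $w_{i,j}$ is exposed by a functional of the form $x_i-x_j$), but the first two steps are argued differently. For the containment, the paper does not invoke the general fact about vertices of Minkowski sums; it shows directly that $\mathcal{P}_{R_1,R_2}\subseteq\conv{\{w_{i,j}\}_{i,j}}$ by writing an arbitrary point as $R_1\sum_i\lambda_i t_i-R_2\sum_j\sigma_j t_j$ and observing that the products $\ell_{ij}=\lambda_i\sigma_j$ are nonnegative and sum to $1$, so the point equals $\sum_{i,j}\ell_{ij}w_{i,j}$. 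Your route via exposing functionals is more structural and generalizes to any Minkowski sum, while the paper's is a one-line computation specific to simplices; both are sound. For the diagonal elimination, the paper notes that $w_{i,i}=(R_1-R_2)t_i$ lies in $\triangle(O,R_1-R_2)$, which sits strictly inside $\triangle(O,R_1)\subseteq\mathcal{P}_{R_1,R_2}$ when $R_1\geq R_2$ (and inside $\bar{\triangle}(O,R_2)$ otherwise), which is cleaner than your segment-through-$O$ argument; your version works but, as you yourself flag, needs the case split on the sign of $R_1-R_2$ and the boundary case $R_1=R_2$ handled explicitly, so if you keep it, state those cases rather than gesturing at symmetry. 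Your observation that the lemma as phrased only needs the containment plus diagonal elimination, with the exposing-functional step establishing tightness, is accurate.
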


\begin{proof}
Observe that $w_{i,j}$ is contained in $\mathcal{P}_{R_1,R_2}$ for all pairs $i$, $j$ from $0$ to $n$. 
We now show that  $\mathcal{P}_{R_1,R_2}$ is contained in the convex hull of $w_{i,j}$ where $i,~j$ vary from $0$ to $n$. 
Let $p$ be a point in $\mathcal{P}_{R_1,R_2}$ by definition it can written as $R_1\sum^{n}_{i=0}\lambda_iv_i-R_2 \sum_{i=0}^{n}\sigma_iv_i$
with $\lambda_i \geq 0,~\sigma_i \geq 0$ for $i$ from $0$ to $n$ and $\sum_{i=0}^{n} \lambda_i=\sum_{i=0}^{n} \sigma_i=1$.
We let $\ell_{ij}=\lambda_i.\sigma_j$ and write  $p=\sum_{i=0}^{n+}\sum_{j=0}^{n}\ell_{ij}w_{ij}$. We now verify that $\sum_{i,j}\ell_{i,j}=1$ and $\ell_{i,j} \geq 0$.
This shows that $\mathcal{P}_{R_1,R_2}$ is contained in the convex hull of $\{w_{i,j}\}_{i,j}$.


We now show that $w_{i,i}$ are not vertices of $\mathcal{P}_{R_1,R_2}$ since $w_{i,i}$ is contained in $\triangle(O,R_1-R_2)$ and $\triangle(O,R_1-R_2)$ is contained in $\triangle(O,R_1)$ if $R_1 \geq R_2$ and 
is contained in $\bar{\triangle}(O,R_2)$ otherwise. To conclude the proof of the lemma, it suffices to show that $w_{i,j}$ is a vertex if $i \neq j$. To this end, we consider the linear function $f_{i,j}=x_i-x_j$ and note that $w_{i,j}$ is the unique maximum of 
$f_{i,j}$ among all points in the set $\{w_{i,j}\}_{i,j}$. 
\end{proof}





\section{A duality theorem and its generalisations} \label{dual_sect}

We obtain another proof of the fact that testing if a divisor $D$ is effective or not 
i.e., testing if $r(D) \geq 0$ is contained in $NP \cap co-NP$.  Although, this fact 
is already implicit in the algorithm of Tardos, our proof motivates a generalisation of the duality theorem which will see in the next subsection.

An interesting characterisation of divisors of negative rank (i.e., rank equal to minus one) is the following theorem first established in \cite{BaNo07}:

\begin{theorem} {\label{structsig_theo}}
A point $D$ has rank minus one if and only if it dominates a point in $Ext(L_G)$. 
\end{theorem}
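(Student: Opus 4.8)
The plan is to obtain Theorem~\ref{structsig_theo} as an essentially immediate consequence of the rank formula in Theorem~\ref{rankform_theo}.

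First, I would rewrite that formula in the form ``$r(D)=-1$ if and only if the minimum it defines equals $0$'': since $deg^{+}$ only ever sums strictly positive coordinates it is a nonnegative quantity, so Theorem~\ref{rankform_theo} already shows $r(D)\ge -1$ for every $D$, and $r(D)=-1$ holds precisely when the minimum over $\nu\in Ext(L_G)$ is attained with value $0$, i.e.\ precisely when there exists $\nu\in Ext(L_G)$ for which $deg^{+}$ of the difference between $D$ and $\nu$ is $0$.

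Second, I would record the one elementary fact that does the real work: for an integer vector $R$, $deg^{+}(R)=\sum_{i\,:\,R_i>0}R_i$ equals $0$ if and only if the set $\{i:R_i>0\}$ is empty, i.e.\ if and only if $R\le 0$ coordinatewise. Applying this with $R$ the difference occurring in Theorem~\ref{rankform_theo} converts ``$deg^{+}$ vanishes for some $\nu\in Ext(L_G)$'' into a coordinatewise comparison between $D$ and some $\nu\in Ext(L_G)$; tracking the sign inside $deg^{+}$ against the fact that $Ext(L_G)$ is built from the vectors $-\nu_{\pi}$ (and not $\nu_{\pi}$), that comparison comes out as $D\ge\nu$, which is exactly the statement that $D$ dominates a point of $Ext(L_G)$. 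Both directions of the ``if and only if'' drop out of this single chain of equivalences, so nothing extra is needed for either implication.

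The proof is therefore a short unwinding, and the only place that calls for attention --- the ``main obstacle,'' such as it is --- is the sign bookkeeping in the last step: one must be careful with the $-\nu_{\pi}$ in the definition of $Ext(L_G)$ and with the asymmetry of $deg^{+}$ so as to land on $D\ge\nu$ rather than $\nu\ge D$ (i.e.\ on ``dominates'' rather than ``is dominated by''). There is no combinatorial content beyond Theorem~\ref{rankform_theo}. Should one prefer an argument independent of Theorem~\ref{rankform_theo}, the statement can instead be recovered via $v$-reduced divisors as in \cite{BaNo07}: $r(D)=-1$ iff the $v$-reduced representative of $D$ is negative at $v$, and the ``extremal'' divisors with this property are exactly the $L_G$-translates of the $-\nu_{\pi}$; but the route through Theorem~\ref{rankform_theo} is considerably shorter.
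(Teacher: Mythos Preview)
The paper does not supply its own proof of Theorem~\ref{structsig_theo}; it is quoted with attribution to \cite{BaNo07} (and its generalisation to \cite{AmiMan10}), so there is nothing in the paper to compare against directly. Your route through Theorem~\ref{rankform_theo} is the natural one and is essentially how the statement is recovered in \cite{AmiMan10}: the chain $r(D)=-1 \Longleftrightarrow \min_{\nu}\deg^{+}(D-\nu)=0 \Longleftrightarrow \exists\,\nu\in Ext(L_G)\ \text{with}\ D-\nu\le 0$ is correct and complete.

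Where your write-up goes wrong is precisely at the step you flag as the ``main obstacle.'' The chain above terminates at $D\le\nu$ for some $\nu\in Ext(L_G)$, i.e.\ $D$ is \emph{dominated by} a point of $Ext(L_G)$. No bookkeeping with the $-\nu_{\pi}$ in the definition of $Ext(L_G)$ can convert $D\le\nu$ into $D\ge\nu$ for the same $\nu$; the set $Ext(L_G)$ is fixed and the inequality falls out the way it falls out. Concretely, for the triangle $C_3$ one has $g-1=0$, every element of $Ext(L_G)$ has degree $0$, and the divisor $D=(-1,0,0)$ satisfies $r(D)=-1$ while dominating no degree-$0$ vector at all; it is, however, dominated by $(-1,1,0)\in Ext(L_G)$. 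So the assertion ``that comparison comes out as $D\ge\nu$'' is false.

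The mismatch is not in your method but in the paper's wording and sign conventions. With $Ext(L_G)=\{-\nu_\pi+q\}$ and the formula $r(D)=\min_{\nu\in Ext(L_G)}\deg^{+}(D-\nu)-1$ as stated, the honest conclusion is $D\le\nu$; the word ``dominates'' in the theorem should be read as ``is dominated by'' (note also that the sentence after Theorem~\ref{rankform_theo}, asserting $\deg^{+}(D-\nu)=\ell_1(D-\nu)/2$ when $\deg(D)=g-1$, already forces $\deg(\nu)=g-1$, inconsistent with $\deg(-\nu_\pi+q)=-(g-1)$, so the paper itself is not internally consistent on this sign). Your argument is correct once you stop at $D\le\nu$ and record the discrepancy; the attempted sign flip to match the literal wording is the one step that does not hold up.
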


Theorem \ref{structsig_theo} was further generalised to arbitrary full dimensional sublattices of $H_0$ in \cite{AmiMan10}.
We will state a  ``projected'' version of the theorem here. Let $L$ be a full-dimensional sublattice of $H_0$.
 For any real number $t>0$, define the arrangements $\mathcal{A}_t=\cup_{q \in L} \bar{\triangle}(q,t)$ and $\mathcal{B}_t=\cup_{c \in \text {Crit}_{\triangle}(L)}\triangle(c,t)$ 
(see Subsection \ref{distfunc_subsect} for the definition). We have the following duality theorem for the arrangements $\mathcal{A}_t$ and $\mathcal{B}_t$.

\begin{theorem}( {\bf A Duality Theorem for the arrangement of Simplices})\label{stansimpdual_theo}
Let $L$ be a full dimensional lattice in $H_0$ for any real number $0 \leq t \leq Cov_{\bar{\triangle}}(L)$,  then the arrangements $\mathcal{A}_{t}$ 
and $\mathcal{B}_{Cov_{\bar{\triangle}}(L)-t}$ tile $H_0$ i.e.,  $int(\mathcal{A}_t) \cup \mathcal{B}_{Cov_{\bar{\triangle}}(L)-t}=H_0$ and 
$int(\mathcal{A}_t) \cap \mathcal{B}_{Cov_{\bar{\triangle}}(L)-t}=\emptyset$. 
\end{theorem}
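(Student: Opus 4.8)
The plan is to reduce the statement to the known structural result, Theorem \ref{structsig_theo} (in its lattice-generalised form from \cite{AmiMan10}), after first reinterpreting membership in the two arrangements in terms of polyhedral distance functions. The key observation is that, by the formula of Lemma (Lemma 4.7, \cite{AmiMan10}), for $P\in H_0$ we have $P\in\mathcal{A}_t=\cup_{q\in L}\bar\triangle(q,t)$ iff $\min_{q\in L} d_{\bar\triangle}(q,P)\le t$, i.e.\ $h_{\bar\triangle,L}(P)\le t$ where for this direction of the distance function I mean $\min_q d_{\bar\triangle}(q,P)$; and similarly $P\in\mathcal{B}_s=\cup_{c\in\Crit_\triangle(L)}\triangle(c,s)$ iff $\min_{c\in\Crit_\triangle(L)} d_\triangle(c,P)\le s$. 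So the theorem is equivalent to the clean statement: for every $P\in H_0$,
\[
\min_{q\in L} d_{\bar\triangle}(q,P)\;+\;\min_{c\in\Crit_\triangle(L)} d_\triangle(c,P)\;=\;\Cov_{\bar\triangle}(L),
\]
with the further ``strict versus non-strict'' bookkeeping needed to get the $int(\cdot)$ statement. The two assertions $int(\mathcal{A}_t)\cup\mathcal{B}_{\Cov-t}=H_0$ and $int(\mathcal{A}_t)\cap\mathcal{B}_{\Cov-t}=\emptyset$ follow from this identity: the intersection statement because $d_{\bar\triangle}(q,P)<t$ together with $d_\triangle(c,P)\le \Cov-t$ would violate the sum being $\Cov$; the covering statement because if $P\notin int(\mathcal{A}_t)$ then $\min_q d_{\bar\triangle}(q,P)\ge t$, hence $\min_c d_\triangle(c,P)\le\Cov-t$, i.e.\ $P\in\mathcal{B}_{\Cov-t}$.

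First I would establish the distance-sum identity. The ``$\ge$'' direction is the combinatorial heart and is exactly where Theorem \ref{structsig_theo} enters: I would show that $\Crit_\triangle(L)$ consists precisely of the local maxima of $h_{\triangle,L}$, that the set $\Ext(L_G)$ (in the lattice-general form, $-\nu_\pi+q$ analogues) projects onto $\Crit_\triangle(L)$, and that the characterisation ``$D$ has rank $-1$ iff $D$ dominates a point of $\Ext$'' translates, after projecting to $H_0$ and invoking the Projection Lemma (Lemma \ref{proj_lem}) and the translation invariance lemmas (Lemmas \ref{trans_lem}, \ref{lem:trans-inv}), into the statement that the largest $\bar\triangle$-simplex around $P$ avoiding all lattice points $L$ has radius complementary to the smallest $\triangle$-simplex from $\Crit_\triangle(L)$ covering $P$. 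The ``$\le$'' direction should be more elementary: given a lattice point $q$ realising $d_{\bar\triangle}(q,P)=t_0$, I would produce a critical point $c$ with $d_\triangle(c,P)\le\Cov-t_0$ by a direct geometric construction — grow $\bar\triangle(q,t_0)$ until its boundary touches $P$, then the ``dual'' simplex filling the complementary region is centred at some local maximum, and one checks its radius. The quantity $\Cov_{\bar\triangle}(L)$ — the covolume/covering radius of $L$ with respect to $\bar\triangle$ — is what makes the two radii add up to a constant independent of $P$; I would want a lemma (presumably available from \cite{AmiMan10}) expressing $\Cov_{\bar\triangle}(L)$ as the common value $d_\triangle(c,P)+d_{\bar\triangle}$-radius at the ``balance point''.

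The main obstacle I anticipate is the careful handling of the boundary case and the open-versus-closed distinction: Theorem \ref{structsig_theo} is a statement about rank $=-1$ (a closed condition on one side), and I will need to track precisely which inequalities are strict when $P$ lies on the common boundary of $\mathcal{A}_t$ and $\mathcal{B}_{\Cov-t}$, so that $int(\mathcal{A}_t)$ rather than $\mathcal{A}_t$ appears. Concretely, the subtle point is showing $int(\mathcal{A}_t)\cap\mathcal{B}_{\Cov-t}=\emptyset$ rather than merely $\mathcal{A}_t\cap\mathcal{B}_{\Cov-t}$ having empty interior; this requires knowing that $d_{\bar\triangle}(q,P)<t$ strictly forces $d_\triangle(c,P)>\Cov-t$ strictly for all $c$, which in turn rests on the distance-sum identity being an exact equality (not just an inequality) for every $P$. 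A secondary technical point is justifying the reduction from the graph case $\Ext(L_G)$ to the general full-dimensional sublattice $L$ of $H_0$; I would cite the generalisation in \cite{AmiMan10} for the existence and projection properties of the extremal/critical points, so that the only genuinely new content here is repackaging that structural theorem as a tiling/duality statement via the polyhedral distance formalism of Section \ref{polydistfunc_sect}.
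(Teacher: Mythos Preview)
The paper does not actually prove Theorem \ref{stansimpdual_theo}; it is stated as a known result imported from \cite{AmiMan10} (introduced with ``We will state a `projected' version of the theorem here''), and is then used as a black box in the proof of the generalised Theorem \ref{simpdual_theo}. So there is no in-paper proof to compare your proposal against.

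That said, your outline is essentially the right shape for how such a result is established. The reformulation via the distance-sum identity
\[
\min_{q\in L} d_{\bar\triangle}(q,P)\;+\;\min_{c\in\Crit_\triangle(L)} d_\triangle(c,P)\;=\;\Cov_{\bar\triangle}(L)
\]
is the natural target, and your plan to derive the ``$\ge$'' direction from the rank $-1$ characterisation (Theorem \ref{structsig_theo} in its lattice-generalised form) is the correct reduction. One point to tighten: your ``$\le$'' direction sketch (grow $\bar\triangle(q,t_0)$ and find a complementary critical point) is vague as written; in practice this direction also comes out of the structural theorem rather than from an independent geometric construction, since the characterisation of $\Crit_\triangle(L)$ as local maxima of $h_{\triangle,L}$ already encodes both inequalities at once. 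Your anticipated obstacle about the open/closed bookkeeping for $int(\mathcal{A}_t)$ is genuine and is exactly where care is needed, but once the exact distance-sum equality is in hand it resolves as you indicate.
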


We now use Theorem \ref{simpdual_theo} and the fact that in the case of Laplacian lattices, the set of $\text{Crit}_{\triangle}(L_G)$ is precisely the projection of $Ext(L_G)$ onto $H_0$ shows that the computational problem of testing whether rank of a divisor is non-negative and is in $NP \cap co-NP$. 
As we shall see the fact that the elements of $\text{Crit}_{\triangle}(L_G)$ have a combinatorial interpretation help to obtain an efficient verification procedure for $r(D)<0$.


\begin{theorem}\label{efftest_theo}
The problem of deciding if $r(D) \geq 0$ is contained in $NP \cap co-NP$.
\end{theorem}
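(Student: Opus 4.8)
The plan is to establish membership in $NP \cap co-NP$ by exhibiting a short, efficiently verifiable certificate in each of the two cases $r(D) \geq 0$ and $r(D) = -1$. The guiding principle is that the combinatorial description of $\text{Crit}_{\triangle}(L_G)$ via acyclic orientations (the points $-\nu_\pi + q$, $\pi \in S_{n+1}$, $q \in L_G$) makes every element of $Ext(L_G)$ explicitly presentable, and Theorem \ref{structsig_theo} (together with its projected/dual form, Theorem \ref{stansimpdual_theo}) turns the predicate ``$r(D) \geq 0$'' into a geometric statement about whether $\pi_0(D)$ avoids a certain arrangement of simplices.

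First I would handle the $co-NP$ direction, i.e.\ a certificate for $r(D) = -1$. By Theorem \ref{structsig_theo}, $r(D) = -1$ iff $D$ dominates some point of $Ext(L_G)$, i.e.\ there exist a permutation $\pi \in S_{n+1}$ and a lattice vector $q = Q \cdot w \in L_G$ with $D \geq -\nu_\pi + q$ coordinatewise. The certificate is the pair $(\pi, w)$. Verification is clearly polynomial: from $\pi$ one computes the acyclic orientation and hence $\nu_\pi$ in polynomial time, one computes $q = Q \cdot w$, and one checks the $n+1$ inequalities $D_i \geq (-\nu_\pi)_i + q_i$. The only subtlety is a size bound on $w$: I would argue that if $D$ dominates \emph{some} element of $Ext(L_G)$ then it dominates one whose $L_G$-component has polynomially bounded bit-length — this follows because $D - (-\nu_\pi)$ lies in a bounded region and membership of a coset representative in the effective region forces $q$ to lie in a box of size polynomial in $\|D\|$ and the entries of $Q$; equivalently one can invoke that the $q$-reduced (Dhar-reduced) representative of the class of $D + \nu_\pi$ has polynomial size, and $r(D) = -1$ is detected on that representative.

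Next I would handle the $NP$ direction, a certificate for $r(D) \geq 0$. The naive idea — exhibit the effective divisor $E \sim D$, i.e.\ a $w$ with $D + Q\cdot w \geq 0$ — works directly: the reduced divisor in the class of $D$ is effective iff $r(D) \geq 0$, it has polynomial bit-length, and checking $D + Q \cdot w \geq 0$ is polynomial. So the $NP$ certificate is simply that $w$. This already gives $r(D) \geq 0 \in NP$ without needing the duality theorem at all; the role of Theorem \ref{stansimpdual_theo} is conceptual — it guarantees that exactly one of the two certificates exists, making the two halves genuinely complementary, and it is what generalizes to the arbitrary-sublattice setting discussed later.

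The main obstacle I anticipate is the polynomial size bound on the witnessing lattice vector $w$ in each case: the definitions only guarantee \emph{existence} of an equivalent effective divisor (resp.\ a dominated element of $Ext(L_G)$), not that the firing vector realizing it is small. I would close this gap by passing to canonical representatives — the $q_0$-reduced divisor of Dhar, which in each linear equivalence class is unique, is computable in polynomial time, and has coordinates bounded by the degrees of the vertices (hence polynomial bit-length); the firing vector connecting $D$ to its reduced form is correspondingly polynomially bounded. Thus in the $NP$ case the certificate is ``$w$ such that $D + Q\cdot w$ is the reduced divisor, together with the observation that it is effective,'' and in the $co-NP$ case it is ``$\pi$ together with the firing vector making $D + \nu_\pi$ reduced, together with the observation that the reduced form still dominates $-\nu_\pi$.'' Once the size bounds are in place, both verifications are transparently polynomial, and Theorem \ref{structsig_theo} (or equivalently Theorem \ref{stansimpdual_theo} specialized to $L = L_G$, $t = 0$) guarantees exactly one certificate type is valid, completing the proof that the problem lies in $NP \cap co-NP$.
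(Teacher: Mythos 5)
Your proposal is correct and follows essentially the same route as the paper: the $NP$ certificate is a lattice point $q\in L_G$ with $D\geq q$, and the $co$-$NP$ certificate is an element $-\nu_\pi+q$ of $Ext(L_G)$ dominated by $D$, whose existence is guaranteed by Theorem \ref{structsig_theo}. The only difference is cosmetic: you justify the polynomial size bound on the witness via reduced divisors, whereas the paper bounds the witness geometrically by confining it to a simplex of radius $Cov_{\bar{\triangle}}(L_G)$ around $\pi_0(D)$; both are standard and valid.
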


\begin{proof}
Suppose that $r(D) \geq 0$ then by definition there is a point $q \in L_G$ such that 
$D \geq q$. We can assume that $q$ is contained in $\bar{\triangle}(\pi_0(D),Cov_{\bar{\triangle}}(L_G))$ and hence, 
$q$ has a description that is polynomial in the description of $D$ and $Cov_{\bar{\triangle}}(L_G)$.
We know that $Cov_{\bar{\triangle}}(L_G)$ is the density of the graph (see \cite{AmiMan10}). 
The verifier then tests if $D \geq q$ and whether $q \in L_G$ i.e., 
 and hence, if $q$ can be written as an 
integral combination of the basis elements of $L_G$. This test is the lattice membership problem 
and can be performed in polynomial time, see \cite{Kansur87} for more details. 

This shows that the problem of deciding if $r(D) \geq 0$ is in $NP$. Conversely, to show that the problem of 
testing if $r(D)<0$ is also contained in $NP$, we use the structural theorem of Theorem \ref{structsig_theo}. By Theorem \ref{structsig_theo} 
we know that if $r(D)<0$ then there is a point $c'$ in $\text{Crit}_{\triangle}(L_G)$ of the form $c'=c_{\pi}+q$ 
where $q \in L_G$ such that $c \in \triangle(\pi_0(D),Cov_{\bar{\triangle}}(L_G)-deg(D)/n+1)$. Since, $c'$ is contained in $\triangle(\pi_0(D),Cov_{\bar{\triangle}}(L_G)-deg(D)/n+1)$ 
we know that $c'$ has a description that is polynomial 
in the description of $D$ and the description of $G$.  In order to verify that $c'$ is of the form 
$c'=c_{\pi}+q$ for some $\pi \in S_{n+1}$ and $q \in L_G$, the prover also produces $\pi$ and $q$.
The verifier then tests if $q$ is in $L_G$ by testing if $q$ can be written as an integral combination
of a basis of $L_G$, this test is known as the membership testing problem and can be performed in polynomial time and then test if $c'-q$ is equal to the projection of  $(indeg_{\pi}(v_1)-1,indeg_{\pi}(v_2)-1,\dots,indeg_{\pi}(v_{n+1})-1)$ onto $H_0$. 
Hence, deciding if $r(D)<0$ is also contained in $NP$.
\end{proof}


\subsection{Generalised Duality}

Reformulating Corollary \ref{rankgminusone_cor} in a more geometric language, the rank of the divisor is essentially the smallest possible radius of the arrangement of polytopes that are up to scaling the unit ball of the $\ell_1$-norm and centered at points in $\text{Crit}_{\triangle}(L_G)$ that contains $\pi_0(D)$.  
A ``duality theorem'' similar to Theorem \ref{stansimpdual_theo} for the $\ell_1$-unit ball instead of the simplex $\triangle$ will, under some mild assumptions, 
have similar implications as  Theorem \ref{efftest_theo} for the problem of testing if a divisor of degree $g-1$ has rank at most $c$ for any $c$ between $-1$ and $g-1$. 
This observation also raises the problem of extending the notion of duality from the regular simplex $\triangle$ to a general polytope.

\begin{definition}({\bf Duality for polytopes}) \label{dual_defi}
The graph  $G$ is said to have duality with respect to the polytope $\mathcal{P}$ if there exists a star-shaped body  $\mathcal{P}^{*}$ 
such that for $R>0$, there exists an $R'>0$ such that a point $p$ is contained in the interior of the arrangement $\cup_{c \in \text{Crit}_{\triangle}(L_G)}\mathcal{P}(c,R)$ 
if and only if it is not contained in $\cup_{q \in L_G}\mathcal{P}^{*}(q,R')$.
\end{definition}

\begin{remark}
Our imposition that  $\mathcal{P}^{*}$ has to be star-shaped in the definition of duality is somewhat arbitrary and is motivated from the 
fact that in practice $\mathcal{P}^{*}$ happens to be star-shaped.
\end{remark}

While we do not know of tools to obtain duality theorems with respect to polytopes in general we will now sketch a general strategy to prove duality theorems for simplices. Here are the main ingredients:

(I1). From the results in \cite{AmiMan10}, we know that the elements of $\text{Crit}_{\triangle}(L_G)$ have a geometric interpretation namely as  
the local maximum of the simplicial distance function $\triangle$ on the lattice $L_G$. We use the fact that the set of local maxima of the simplicial distance 
function behave ``well'' under non-singular linear maps i.e., if $M$ is a non-singular linear map then $M(\text{Crit}_{\triangle}(L_G)))$ is the set of 
local maxima of $M(L_G)$ under the simplicial distance function $d_{M(\triangle)}$.  

(I2). Suppose that the linear map $M$ fixes $L_G$ and also $\text{Crit}_{\triangle}(L_G)$ as sets then we know that $\text{Crit}_{\triangle}(L_G)$ must 
be the set of local maxima of the simplicial distance function induced by the simplex $M(\triangle)$ on $L_G$. 
This motivates a new notion of  automorphism of a graph i.e., the group of linear transformations that fix both $L_G$ and $\text{Crit}_{\triangle}(L_G)$. 
Furthermore, the graph $G$ has duality with respect to the simplex $M(\triangle)$ with  ${(M(\triangle))}^{*}$ being $-M(\triangle)$.


Let us  now discuss each item of the general strategy in more detail.






\begin{lemma}{\label{distfunpre_lem}}Let $M$ be a non-singular linear transformation on $H_0$ and let $L$ be a full dimensional lattice in $H_0$, then
for every point $p \in H_0$, $h_{\triangle,L}(p)=h_{M(\triangle),M(L)}(M(p))$. \end{lemma}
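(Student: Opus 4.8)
The plan is to unwind the definitions of the distance function induced by a discrete point set and to exploit the fact that a non-singular linear map $M$ intertwines the polyhedral distance function $d_{\triangle}$ with $d_{M(\triangle)}$. First I would establish the pointwise identity for the underlying polyhedral distances: for any two points $p, q \in H_0$, we have $d_{\triangle}(p,q) = d_{M(\triangle)}(M(p), M(q))$. This is immediate from the definition $d_{\mathcal P}(p,q) = \inf\{\lambda \ge 0 \mid q \in p + \lambda \cdot \mathcal P\}$: since $M$ is linear, $q \in p + \lambda\cdot\triangle$ if and only if $M(q) = M(p) + \lambda \cdot M(x)$ for some $x \in \triangle$, i.e. $M(q) \in M(p) + \lambda\cdot M(\triangle)$; so the two infima are taken over exactly the same set of $\lambda$'s and hence are equal. (Strictly one should note $M$ restricts to a bijection of $H_0$, which holds since $M$ is non-singular on $H_0$; this also guarantees $O = M(O)$ is in the interior of $M(\triangle)$, so $d_{M(\triangle)}$ is a legitimate polyhedral distance function.)

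Next I would plug this into the definition $h_{\triangle, L}(p) = \min_{q \in L} d_{\triangle}(p, q)$. Using the pointwise identity,
\[
h_{\triangle, L}(p) = \min_{q \in L} d_{\triangle}(p,q) = \min_{q \in L} d_{M(\triangle)}(M(p), M(q)).
\]
Now as $q$ ranges over $L$, the image $M(q)$ ranges over all of $M(L)$ (again because $M$ is a bijection of $H_0$ sending the lattice $L$ onto the lattice $M(L)$), so the right-hand side equals $\min_{q' \in M(L)} d_{M(\triangle)}(M(p), q') = h_{M(\triangle), M(L)}(M(p))$. This gives the claimed equality $h_{\triangle, L}(p) = h_{M(\triangle), M(L)}(M(p))$ for all $p \in H_0$.

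There is essentially no hard step here — the lemma is a formal consequence of linearity, and the only points that need a word of care are bookkeeping ones: that $M$ restricted to $H_0$ is a bijection (so that the substitution $q \mapsto M(q)$ is a reindexing of the same min, and so that $M(L)$ is genuinely a full-dimensional lattice in $H_0$ and $M(\triangle)$ a convex body with $O$ in its interior, which is what is needed for $d_{M(\triangle)}$ and $h_{M(\triangle), M(L)}$ to be defined). The substance of the lemma — that the entire critical structure of the simplicial distance function transforms equivariantly under non-singular linear maps — is exactly what will be used in ingredient (I1) of the general strategy, and the present statement is just the scalar (height-function) shadow of that fact.
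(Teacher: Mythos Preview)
Your proof is correct and follows essentially the same idea as the paper's: both rest on the observation that $q \in p + \lambda\cdot\triangle$ iff $M(q) \in M(p) + \lambda\cdot M(\triangle)$, from which the equality of the height functions follows. The only cosmetic difference is that you establish the pointwise identity $d_{\triangle}(p,q)=d_{M(\triangle)}(M(p),M(q))$ first and then reindex the minimum, whereas the paper picks a minimizer, pushes it forward to get one inequality, and then applies the same argument with $M^{-1}$ to get the other.
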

\begin{proof}
We know that there exists a lattice point $q \in L$ such that $h_{\triangle,L}(p)=d_{\triangle}(p,q)$. Hence, 
$p \in \triangle(O,h_{\triangle,L}(p))+q$.  We have: $M(p) \in M(\triangle(O,h_{\triangle,L}(p)))+M(q)$ and we have:
$d_{M(\triangle)}(M(p),M(q))=h_{\triangle,L}(p)$ and hence, $h_{M(\triangle),M(L)}(M(p)) \leq h_{\triangle,L}(p)$.
Since, $M$ is non-singular, $M^{-1}$ is well defined and we apply the argument for a point in $M(L)$ and the distance function $d_{M(\triangle)}$ to deduce that $h_{M(\triangle),M(L)}(M(p)) \geq h_{\triangle,L}(p)$.
\end{proof}

Another result we use is the behaviour of arrangements of simplices under non-singular linear transformations.
Let $\mathcal{D}$ be a discrete point set of $H_0$ and let  $\mathcal{A}_{S,\mathcal{D},t}$ be the arrangement of simplices of type $S$ on $\mathcal{D}$, more precisely 
$\mathcal{A}_{\mathcal{D},S,t}=\cup_{q \in \mathcal{D}}S(q,t)$.

\begin{lemma}\label{arrangbeh_lem}
For any non-singular linear transformation $M$ on $H_0$, $M(\mathcal{A}_{\mathcal{D},S,t})=\mathcal{A}_{M(\mathcal{D}),M(S),t}$ . Furthermore, the map $M$ preserves the interior of the arrangements i.e.,
$M(int(\mathcal{A}_{\mathcal{D},S,t}))=int(\mathcal{A}_{(M(\mathcal{D}),M(S),t)})$.
\end{lemma}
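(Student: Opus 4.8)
The plan is to reduce both claims to the elementary fact that a non-singular linear map of $H_0$ onto itself is a linear isomorphism, hence in particular a homeomorphism, and that it commutes with dilations and translations.

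First I would establish the set equality $M(\mathcal{A}_{\mathcal{D},S,t})=\mathcal{A}_{M(\mathcal{D}),M(S),t}$ at the level of a single translated-and-dilated simplex. Since $M$ is linear, for any $q\in H_0$ we have $M(q+t\cdot S)=M(q)+t\cdot M(S)$, which in the notation of Subsection \ref{polydistfunc_sect} reads $M(S(q,t))=(M(S))(M(q),t)$. Taking the union over $q\in\mathcal{D}$, and using that $M$ commutes with arbitrary unions of sets, gives $M(\mathcal{A}_{\mathcal{D},S,t})=\cup_{q\in\mathcal{D}}M(S(q,t))=\cup_{q\in\mathcal{D}}(M(S))(M(q),t)=\cup_{q'\in M(\mathcal{D})}(M(S))(q',t)=\mathcal{A}_{M(\mathcal{D}),M(S),t}$, where the second-to-last equality uses that $q\mapsto M(q)$ is a bijection of $\mathcal{D}$ onto $M(\mathcal{D})$.

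For the second claim I would invoke the topological fact that a continuous bijection of $H_0$ with continuous inverse maps the (relative) interior of any subset onto the interior of its image: since $M$ is a non-singular linear transformation of the finite-dimensional space $H_0$, both $M$ and $M^{-1}$ are continuous, and a routine argument (apply openness of $M$ to get $M(int(X))\subseteq int(M(X))$, and openness of $M^{-1}$ to get the reverse inclusion) shows $M(int(X))=int(M(X))$ for every $X\subseteq H_0$. Applying this with $X=\mathcal{A}_{\mathcal{D},S,t}$ and substituting the identity from the first claim yields $M(int(\mathcal{A}_{\mathcal{D},S,t}))=int(M(\mathcal{A}_{\mathcal{D},S,t}))=int(\mathcal{A}_{M(\mathcal{D}),M(S),t})$.

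There is no genuinely hard step here; the only points requiring a word of care are that ``interior'' must be read relative to $H_0$ (not relative to $\mathbb{R}^{n+1}$), so that $M$, which is defined only on $H_0$, is actually an open map, and that $M(S)$ is again an admissible body for the arrangement notation, i.e., a polytope with its center $O=M(O)$ in its relative interior, which holds because $M$ is a homeomorphism fixing $O$ and $O\in int(S)$. I would therefore expect the write-up to be short, the ``main obstacle'' being merely to state the linearity and homeomorphism observations precisely.
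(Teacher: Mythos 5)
Your proposal is correct and follows essentially the same route as the paper: the set identity is obtained by commuting the linear map with unions, translations and dilations, and the statement about interiors is reduced to the fact that a non-singular linear map of $H_0$ is a homeomorphism (the paper makes this quantitative by showing $M(B(p,\epsilon))$ contains a ball of radius $\sqrt{\lambda(M)}\,\epsilon$ with $\lambda(M)$ the smallest squared singular value, and its own remark acknowledges this is just the topological fact you invoke). Your added caveats about relative interiors in $H_0$ and $M(S)$ remaining an admissible body are sensible but do not change the argument.
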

\begin{proof}
By definition: $M(\mathcal{A}_{\mathcal{D},S,t})=M(\cup_{q\in \mathcal{D}}S(O,t)+q)=(\cup_{q\in \mathcal{D}}M(S(O,t)+q)=(\cup_{q\in \mathcal{D}}M(S(O,t)))+M(q)=\mathcal{A}_{M(\mathcal{D}),M(S),t}$. Now, suppose $p$ is a point in the interior of the arrangement $\mathcal{A}_{\mathcal{D},S,t}$ then we know that there exists $\epsilon>0$ such that $B(p,\epsilon)$ in the arrangement 
$\mathcal{A}_{\mathcal{D},S,t}$. Applying the linear transformation $M$, we know that $M(B(p,\epsilon))$ is contained in the arrangement $M(\mathcal{A}_{\mathcal{D},S,t})=\mathcal{A}_{(M(\mathcal{D}),M(S),t)}$. Now, we know that $M(B(p,\epsilon))$ contains the ball $B(M(p),\sqrt{\lambda(M)}\epsilon)$ 
where $\lambda(M)=\inf\{ ||M(x)||_2^{2}|~|||x||^{2}=1,~x \in H_0\}$ since $M$ is non-singular we know that $\lambda(M)>0$ and hence, $M(p)$ is contained in the interior of $\mathcal{A}_{(M(\mathcal{D}),M(S),t)}$ and similarly, if we apply the same argument for a point $p'$ in the interior of $\mathcal{A}_{(M(\mathcal{D}),M(S),t)}$ then $M^{-1}(p)$ is a point in the interior of 
$\mathcal{A}_{\mathcal{D},S,t}$.
Hence, $M(int(\mathcal{A}_{\mathcal{D},S,t}))=Int(\mathcal{A}_{(M(\mathcal{D}),M(S),t)})$.
\end{proof}

\begin{corollary}{\label{covradpre_lem}}Let $M$ be a non-singular linear transformation on $H_0$ and let $L$ be a full dimensional lattice in $H_0$, then $Cov_{\triangle}(L)=Cov_{M(\triangle)}(M(L))$.
\end{corollary}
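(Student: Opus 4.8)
The plan is to read off the statement directly from Lemma \ref{distfunpre_lem} together with the fact that a non-singular linear map on $H_0$ is a bijection of $H_0$ onto itself. Recall that $Cov_{\triangle}(L)$ is the covering radius of $L$ with respect to the simplicial distance function, i.e. $Cov_{\triangle}(L)=\sup_{p\in H_0}h_{\triangle,L}(p)$, the smallest $t$ for which the arrangement $\mathcal{A}_{L,\triangle,t}=\cup_{q\in L}\triangle(q,t)$ covers $H_0$. So the corollary is really the assertion that this scalar invariant is unchanged when we transport everything by $M$.

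First I would invoke Lemma \ref{distfunpre_lem}, which gives $h_{\triangle,L}(p)=h_{M(\triangle),M(L)}(M(p))$ for every $p\in H_0$. Since $M$ is non-singular and maps $H_0$ to $H_0$, it is a bijection of $H_0$, so as $p$ ranges over $H_0$ the point $M(p)$ ranges over all of $H_0$ as well. Taking suprema over $p\in H_0$ on both sides therefore yields
\[
Cov_{\triangle}(L)=\sup_{p\in H_0}h_{\triangle,L}(p)=\sup_{p\in H_0}h_{M(\triangle),M(L)}(M(p))=\sup_{p'\in H_0}h_{M(\triangle),M(L)}(p')=Cov_{M(\triangle)}(M(L)),
\]
which is exactly the claim.

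Alternatively, and essentially equivalently, I could argue via arrangements: by Lemma \ref{arrangbeh_lem}, $M(\mathcal{A}_{L,\triangle,t})=\mathcal{A}_{M(L),M(\triangle),t}$, and since $M$ is a bijection of $H_0$, $\mathcal{A}_{L,\triangle,t}=H_0$ if and only if $\mathcal{A}_{M(L),M(\triangle),t}=H_0$; taking the infimum over such $t$ gives the equality of covering radii. I do not expect any genuine obstacle here — the only point that needs a word of care is that $M$, being non-singular on $H_0$, is a bijection of $H_0$, so that the supremum (resp. the set of admissible radii) is genuinely preserved rather than merely mapped into a subset; everything else is immediate from the two lemmas already established.
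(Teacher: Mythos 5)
Your proof is correct and matches the paper's intent: the corollary is stated without proof precisely because it is meant to follow immediately from Lemma \ref{distfunpre_lem} (equivalently, from Lemma \ref{arrangbeh_lem}) together with the bijectivity of $M$ on $H_0$, which is exactly the argument you give. The only thing you add is the explicit identification $Cov_{\triangle}(L)=\sup_{p\in H_0}h_{\triangle,L}(p)$, which the paper leaves implicit (deferring the definition to \cite{AmiMan10}), and your handling of it is the correct one.
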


\begin{remark}
The second part of Lemma \ref{arrangbeh_lem} is essentially a topological fact and in topological terms we use the fact a non-singular linear transformation from $H_0$ to itself is a homeomorphism from $H_0$ to itself where $H_0$ is equipped with the Euclidean topology. 
\end{remark}

\begin{remark}
Note that we did not use our assumption that the set $\mathcal{D}$ is a discrete point set anywhere in the proof of Lemma \ref{arrangbeh_lem} and can be generalised to non-discrete sets but since we mainly deal with discrete sets we restrict to them.
\end{remark}

\begin{theorem} Let $M$ be a non-singular linear transformation on $H_0$ and let $L$ be a full dimensional lattice in $H_0$, the set of local maxima of the function $h_{M(\triangle),M(L)}$ is precisely the set $M(\text{Crit}_{\triangle}(L))$. \end{theorem}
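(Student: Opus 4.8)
The plan is to transfer the notion of a local maximum through $M$, using the pointwise identity $h_{\triangle,L}=h_{M(\triangle),M(L)}\circ M$ from Lemma \ref{distfunpre_lem} together with the quantitative fact, already established inside the proof of Lemma \ref{arrangbeh_lem}, that a non-singular linear self-map of $H_0$ carries open balls to sets containing open balls about the image point. Concretely, since $M$ is non-singular on $H_0$, the quantity $\lambda(M):=\inf\{||M(x)||_2^2\mid ||x||_2=1,\ x\in H_0\}$ is strictly positive, and hence $M(B(p,\epsilon))\supseteq B(M(p),\sqrt{\lambda(M)}\,\epsilon)$ for every $p\in H_0$ and every $\epsilon>0$ (all balls taken within $H_0$).

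First I would prove the inclusion $M(\text{Crit}_{\triangle}(L))\subseteq\{\text{local maxima of }h_{M(\triangle),M(L)}\}$. Let $c\in\text{Crit}_{\triangle}(L)$, so there is $\epsilon>0$ with $h_{\triangle,L}(c)\geq h_{\triangle,L}(q)$ for all $q\in B(c,\epsilon)$. Put $\delta=\sqrt{\lambda(M)}\,\epsilon$ and take an arbitrary point $p'\in B(M(c),\delta)$. By the ball-containment fact above, $p'=M(p)$ for some $p\in B(c,\epsilon)$, and then Lemma \ref{distfunpre_lem} gives $h_{M(\triangle),M(L)}(p')=h_{\triangle,L}(p)\leq h_{\triangle,L}(c)=h_{M(\triangle),M(L)}(M(c))$. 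Since $p'$ was arbitrary in $B(M(c),\delta)$, the point $M(c)$ is a local maximum of $h_{M(\triangle),M(L)}$.

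The reverse inclusion follows by symmetry. The map $M^{-1}$ is again a non-singular linear self-map of $H_0$, with $M^{-1}(M(\triangle))=\triangle$ and $M^{-1}(M(L))=L$. Applying the inclusion just proved, but with the triple $(M^{-1},\,M(L),\,M(\triangle))$ in place of $(M,\,L,\,\triangle)$, every local maximum $c'$ of $h_{M(\triangle),M(L)}$ yields that $M^{-1}(c')$ is a local maximum of $h_{M^{-1}(M(\triangle)),M^{-1}(M(L))}=h_{\triangle,L}$, i.e. $M^{-1}(c')\in\text{Crit}_{\triangle}(L)$, i.e. $c'\in M(\text{Crit}_{\triangle}(L))$. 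Combining the two inclusions gives the theorem.

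I do not expect a genuine obstacle here: the argument is essentially bookkeeping on top of Lemmas \ref{distfunpre_lem} and \ref{arrangbeh_lem}. The only point that needs care is that ``local maximum'' is a metric/topological notion, so the proof must use that $M$ is a homeomorphism of $H_0$ (equivalently $\lambda(M)>0$), and not merely the pointwise equality of the two height functions along $M$; a bijection that failed to be bicontinuous could in principle destroy the ``local'' quantifier.
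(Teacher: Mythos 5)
Your proposal is correct and follows essentially the same route as the paper's own proof: both use Lemma \ref{distfunpre_lem} to transfer values of the height function along $M$, both use the fact that $M(B(c,\epsilon))$ contains the ball $B(M(c),\sqrt{\lambda(M)}\,\epsilon)$ with $\lambda(M)>0$ to preserve the local quantifier, and both obtain the reverse inclusion by applying the forward argument to $M^{-1}$. Your write-up is in fact slightly more explicit than the paper's at the step where an arbitrary point of the smaller ball is exhibited as $M(p)$ for some $p$ in the original ball.
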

\begin{proof}
Suppose $c$ is an element of $\text{Crit}_{\triangle}(L)$, we know that there exists an $\epsilon>0$ such that
$h_{\triangle,L}(c) \geq h_{\triangle,L}(c')$ for all $c' \in B(\epsilon,c)$. Now by Lemma \ref{distfunpre_lem},
we know that $h_{M(\triangle),M(L)}(M(c)) \geq h_{M(\triangle),M(L)}(c'')$ for all $c'' \in M(B(\epsilon,c))$.
The set $M(B(\epsilon,c))$ is an ellipsoid that contains the ball $B(M(c),\epsilon \sqrt{\lambda(M)})$ where 
$\lambda(M)$ is $\inf\{ ||M(x)||_2^{2}|~|||x||^{2}_2=1,~x \in H_0\}$ and since $M$ is non-singular we have $\lambda(M)>0$. 
Hence, $M(\text{Crit}_{\triangle}(L)) \subseteq \text{Crit}_{M(\triangle)}(M(L))$. 
Since, $M$ is a non-singular linear map, we consider the map $M^{-1}$ and apply the above argument for a point in $\text{Crit}_{M(\triangle)}(M(L))$ to deduce that 
$\text{Crit}_{M(\triangle)}(M(L)) \subseteq M(\text{Crit}_{\triangle}(L))$.
\end{proof}


We will now generalise the duality theorem with respect to any full dimensional simplex on $H_0$ with centroid at the origin. 

\begin{theorem} {\label{simpdual_theo}} Let $S$ be a full dimensional simplex on $H_0$ with centroid at the origin and let $\mathcal{A}_{L,S,t}$ be the arrangement 
$\cup_{q \in L}-S(q,t)$ and let $\mathcal{B}_{L,S,t}$ be the arrangement $\cup_{c \in Crit_{S}(L)}S(c,t)$. Fix a real number $0 \leq t \leq Cov_{\triangle}(L)$, 
the arrangements $\mathcal{A}_{L,S,t}$ and $\mathcal{B}_{L,S,Cov_{S}(L)-t}$ tile $H_0$ i.e.,  $\mathcal{A}_{L,S,t} \cup \mathcal{B}_{L,S,Cov_{S}(L)-t}=H_0$
and $int(\mathcal{A}_{L,S,t}) \cap \mathcal{B}_{L,S,Cov_{S}(L)-t}=\emptyset$.\end{theorem}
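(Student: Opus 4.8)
The plan is to reduce the assertion to the already-established duality theorem for the standard simplex, Theorem~\ref{stansimpdual_theo}, by pulling the whole configuration back along a non-singular linear map of $H_0$ that carries $\triangle$ onto $S$ and then invoking the transport lemmas for arrangements and for $\text{Crit}$ under linear maps.

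First I would produce a non-singular linear map $M\colon H_0\to H_0$ with $M(\triangle)=S$. Let $t_0,\dots,t_n$ be the vertices of $\triangle$ and $s_0,\dots,s_n$ the vertices of $S$; since both simplices have centroid at the origin, $\sum_i t_i=\sum_i s_i=0$. Any $n$ of the $t_i$, say $t_1,\dots,t_n$, form a basis of $H_0$, so define $M$ by $M(t_i)=s_i$ for $i=1,\dots,n$; the relation $t_0=-\sum_{i\ge 1}t_i$ then forces $M(t_0)=-\sum_{i\ge 1}s_i=s_0$, so $M$ sends the vertex set of $\triangle$ onto that of $S$ and hence $M(\triangle)=\conv{(s_0,\dots,s_n)}=S$. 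Since $M$ maps a basis of $H_0$ to the basis $s_1,\dots,s_n$, it is non-singular, and therefore $L':=M^{-1}(L)$ is again a full-dimensional lattice in $H_0$.

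Next I would transport the two arrangements through $M^{-1}$. Because a lattice is symmetric, $\mathcal{A}_{L,S,t}=\cup_{q\in L}(-S)(q,t)$ with $-S=M(-\triangle)=M(\bar{\triangle})$, so Lemma~\ref{arrangbeh_lem} gives $M^{-1}(\mathcal{A}_{L,S,t})=\cup_{q'\in L'}\bar{\triangle}(q',t)$, which is exactly the arrangement $\mathcal{A}_t$ of Theorem~\ref{stansimpdual_theo} for the lattice $L'$; the same lemma records that $M^{-1}$ preserves interiors. For the second arrangement, the preceding theorem on the behaviour of $\text{Crit}$ under non-singular linear maps, applied with the simplex $\triangle$ replaced by $M(\triangle)=S$ and the lattice $L$ replaced by $L'$ (so that $S=M(\triangle)$ and $L=M(L')$), yields $\text{Crit}_S(L)=M(\text{Crit}_{\triangle}(L'))$, i.e. $M^{-1}(\text{Crit}_S(L))=\text{Crit}_{\triangle}(L')$; hence $M^{-1}(\mathcal{B}_{L,S,s})=\cup_{c'\in\text{Crit}_{\triangle}(L')}\triangle(c',s)$, the arrangement $\mathcal{B}_s$ of Theorem~\ref{stansimpdual_theo} for $L'$. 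Corollary~\ref{covradpre_lem} gives $Cov_S(L)=Cov_{\triangle}(L')$, and the lattice-symmetry argument applied to covering radii (using $\cup_{q\in L'}\bar{\triangle}(q,t)=-\cup_{q\in L'}\triangle(q,t)$) gives $Cov_{\bar{\triangle}}(L')=Cov_{\triangle}(L')$; thus under the pullback the dilation parameter $Cov_S(L)-t$ becomes $Cov_{\bar{\triangle}}(L')-t$ and the admissible range for $t$ coincides with the one in Theorem~\ref{stansimpdual_theo}. Applying the homeomorphism $M^{-1}$ of $H_0$ to the two identities $int(\mathcal{A}_t)\cup\mathcal{B}_{Cov_{\bar{\triangle}}(L')-t}=H_0$ and $int(\mathcal{A}_t)\cap\mathcal{B}_{Cov_{\bar{\triangle}}(L')-t}=\emptyset$ for $L'$ then produces the two claimed identities for $(L,S)$.

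The only genuinely delicate point is the bookkeeping in the last step: one must be sure the various covering radii ($Cov_S(L)$, $Cov_{\triangle}(L')$, $Cov_{\bar{\triangle}}(L')$) line up so that both the range of $t$ and the dilation factor appearing in $\mathcal{B}$ match the hypotheses of Theorem~\ref{stansimpdual_theo}, and one must check that $\text{Crit}_S(L)$ really is the local-maxima set to which the $\text{Crit}$-transfer theorem applies — which is why it is cleanest to write $L=M(L')$ before invoking that theorem. Everything else is a direct application of Lemma~\ref{arrangbeh_lem} together with the fact that $M^{-1}$ is a bijection of $H_0$ that preserves interiors.
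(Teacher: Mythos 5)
Your proof is correct and follows essentially the same route as the paper: conjugate by a non-singular linear map relating $S$ and $\triangle$, transport the arrangements and $\mathrm{Crit}$ sets via Lemma~\ref{arrangbeh_lem} and the $\mathrm{Crit}$-transfer theorem, match the covering radii via Corollary~\ref{covradpre_lem}, and invoke Theorem~\ref{stansimpdual_theo}. You are in fact somewhat more explicit than the paper, which leaves the construction of $M$ and the identification $M^{-1}(\mathrm{Crit}_{S}(L))=\mathrm{Crit}_{\triangle}(M^{-1}(L))$ implicit.
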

\begin{proof}
Since $S$ is a full-dimensional simplex with centroid at the origin we know that there is a non-singular map $M$ that takes 
$S$ to the simplex $\triangle$. Furthermore, by Corollary \ref{covradpre_lem} we know that $Cov_{-S}(L)=Cov_{\bar{\triangle}}(M(L))$ and hence $0 \leq t \leq Cov_{\bar{\triangle}}(M(L))$ and we can apply Theorem \ref{stansimpdual_theo} to the lattice $M(L)$, a full dimensional lattice in $H_0$ and with the simplex $\triangle$, that a point $p \in H_0$ is contained exclusively in either exclusively the interior of the arrangement $\mathcal{A}_{M(L),\triangle,t}$ or the arrangement $\mathcal{B}_{M(L),\triangle,Cov_{\bar{\triangle}}(M(L))-t}$. Now note that $M(L)$ and $\text{Crit}_{\triangle}(M(L))$ are discrete sets and apply Lemma \ref{arrangbeh_lem} with the linear transformation $M^{-1}$. 
Hence, we know that $int(\mathcal{A}_{L,S,t}) \cup \mathcal{B}_{L,S,Cov_{S}(L)-t}=H_0$
and $int(\mathcal{A}_{L,S,t}) \cap \mathcal{B}_{L,S,Cov_{S}(L)-t}=\emptyset$ since, if there is a point $p \in H_0$
in $int(\mathcal{A}_{L,S,t}) \cap \mathcal{B}_{L,S,Cov_{S}(L)-t}$ then $M^{-1}(p)$ is contained in 
$int(\mathcal{A}_{M(L),\triangle,t}) \cap \mathcal{B}_{(M(L),\triangle,Cov_{\triangle}(M(L))-t)}$
 which is a contradiction.\end{proof}

\subsection{The Automorphism Group of $\text{Crit}_{\mathcal{P}}(L)$}

We now discuss the second item of the general strategy, the notion of automorphisms:

\begin{definition} ({\bf Automorphism of a lattice})
For a lattice $L$, a non-singular linear map $M$ on the linear space spanned by the elements of $L$ is called an automorphism of $L$ if it induces a bijection on $L$.
\end{definition}

Here is a concrete characterisation of the automorphism group of a lattice:

\begin{lemma}
Let $\mathcal{B}$ be an arbitrary basis of a $n$-dimensional lattice $L$ written as a matrix with the basis elements row-wise. A linear transformation $M$ is an automorphism of the  lattice $L$ if and only if it $M(\mathcal{B})=T \cdot \mathcal{B}$ where $T \in SL(n,\mathbb{Z})$.
\end{lemma}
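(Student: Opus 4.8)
The plan is to prove both directions of the characterization by directly unwinding what it means for a linear map to permute the lattice, using the fixed basis $\mathcal{B}$ as a bridge. First I would establish the forward direction. Suppose $M$ is an automorphism of $L$, so $M$ restricts to a bijection $L \to L$. Each row of $M(\mathcal{B})$ is $M(b_i)$, which lies in $L$ and hence can be written as an integer combination of the basis rows; collecting these coefficients row-wise gives an integer matrix $T$ with $M(\mathcal{B}) = T\cdot\mathcal{B}$. Since $M$ is non-singular and $\mathcal{B}$ has full row rank, $T$ is non-singular, so $|\det T|\ge 1$. To see $|\det T| = 1$, I would run the same argument on $M^{-1}$: because $M$ is a bijection on $L$, so is $M^{-1}$, giving an integer matrix $T'$ with $M^{-1}(\mathcal{B}) = T'\cdot\mathcal{B}$, and then $\mathcal{B} = M^{-1}(M(\mathcal{B})) = T'T\cdot\mathcal{B}$ forces $T'T = I$ (again using that $\mathcal{B}$ has full row rank), so $T$ is invertible over $\mathbb{Z}$, i.e.\ $\det T = \pm 1$. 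Depending on whether the paper intends $SL(n,\mathbb{Z})$ literally or means $GL(n,\mathbb{Z})$, one either notes this is exactly the claim or observes that the statement should read $\det T = \pm 1$; I would phrase it so the argument yields $T \in GL(n,\mathbb{Z})$ and remark that $SL$ is the orientation-preserving subcase.

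For the converse, suppose $M(\mathcal{B}) = T\cdot\mathcal{B}$ with $T \in SL(n,\mathbb{Z})$ (or $GL(n,\mathbb{Z})$). Then for any $v = \alpha\mathcal{B} \in L$ with $\alpha \in \mathbb{Z}^n$ (row vector), we have $M(v) = \alpha\, M(\mathcal{B}) = (\alpha T)\cdot\mathcal{B}$, and $\alpha T$ is again an integer row vector, so $M(v) \in L$; thus $M(L) \subseteq L$. Since $T^{-1}$ is also an integer matrix, the same computation with $T^{-1}$ shows $M^{-1}(L) \subseteq L$, hence $M(L) = L$ and $M$ is a bijection on $L$. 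Also $M$ is non-singular as a linear map on $\mathrm{Span}(L)$ because $\det T \ne 0$ and $\mathcal{B}$ is a basis. This gives that $M$ is an automorphism of $L$ in the sense of the preceding definition.

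The only genuinely delicate point is the cancellation step: concluding $T'T = I$ from $T'T\cdot\mathcal{B} = \mathcal{B}$, and more basically that the matrix $T$ with $M(\mathcal{B}) = T\cdot\mathcal{B}$ is uniquely determined. Both follow from $\mathcal{B}$ having full row rank (its $k$ rows are linearly independent by definition of a basis), so right-multiplication by $\mathcal{B}$ is injective on the relevant space of coefficient matrices. I would state this linear-algebra fact once and reuse it. Everything else is bookkeeping; there is no real obstacle, though care is needed about the $SL$ versus $GL$ wording, and about the convention (rows vs.\ columns) matching the paper's convention that $\mathcal{B}$ is written with basis elements row-wise.
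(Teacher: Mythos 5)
Your proof is correct, and it is the standard (essentially the only) argument for this fact; the paper itself states the lemma without proof, so there is nothing to compare against. Your flag about $SL(n,\mathbb{Z})$ versus $GL(n,\mathbb{Z})$ is well taken and is in fact a genuine correction to the paper's statement: the argument yields $\det T=\pm 1$, and only $GL(n,\mathbb{Z})$ is right --- for instance $M(x)=-x$, which the paper itself later uses as an example of a (critical) automorphism, corresponds to $T=-I$ with determinant $(-1)^n$, which lies outside $SL(n,\mathbb{Z})$ for odd $n$. The one step worth keeping explicit, as you do, is that $\mathcal{B}$ having full row rank makes the coefficient matrix $T$ unique, which is what licenses cancelling $\mathcal{B}$ in $T'T\cdot\mathcal{B}=\mathcal{B}$.
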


The set of automorphisms of a lattice has a natural group structure under composition (if you think of the linear maps as matrices then the group operation is matrix multiplication) and this group is called the automorphism group of $L$.

\begin{definition} ({\bf Automorphism of $\text{Crit}_{\triangle}(L)$})
Let $L$ be a sublattice of $H_0$, a non-singular linear map $M$ on the linear space spanned by the elements of $L$ is called an automorphism of $\text{Crit}_{\triangle}(L)$ if it induces a bijection on $\text{Crit}_{\triangle}(L)$. 
\end{definition}

The set of automorphisms of $\text{Crit}_{\triangle}(L)$ has a natural group structure under composition (if you think of the linear maps as matrices then the group operation is matrix multiplication). The group gives rise to a graph invariant that we call the {\bf critical automorphism group} of the graph.

\begin{remark}
Note that the notion of a critical automorphism group can be developed more generally, for any distance function induced by a convex body $\mathcal{P}$ on a lattice $L$, we can define the group of transformations that fix both $L$ and $\text{Crit}_{P}(L)$ as sets.
\end{remark}

A convenient way to think of the automorphisms of $\text{Crit}_{\triangle}(L_G)$ is to consider $\text{Crit}_{\triangle}(L_G)/L_G$ as a subset of the torus $\mathbf{T}_G=H_0/L_G$ and observe the automorphisms of $L_G$ induce an bijection of the torus $\mathbf{T}_G$ to itself; among these bijections the automorphisms of $\text{Crit}_{\triangle}(L_G)$ are those that take the set $\text{Crit}_{\triangle}(L_G)/L_G$ to itself. Let us now give some concrete examples of $\text{Crit}_{\triangle}(L_G)$:\\

\begin{enumerate}

\item For a regular graph $G$, we have $\text{Crit}_{\triangle}(L_G)=-\text{Crit}_{\triangle}(L_G)$. Hence, the linear transformation $M(x)=-x$ is an example 
of an automorphism of $\text{Crit}_{\triangle}(L_G)$.

\item The permutation map induced by every automorphism of $H_0$ also is indeed an automorphism of $\text{Crit}_{\triangle}(L_G)$; 
but since permutation maps fix the simplex $\triangle$, these are not useful to obtain simplices other than $\triangle$ for which duality holds. 
Note that these automorphism take one class in $\text{Crit}_{\triangle}(L_G)$ to another.

\item Every $c \in \text{Crit}_{\triangle}(L_G)$ where $c=c_{\pi}+q$ and $q \in L_G$. We can write each $c_{\pi}$ as a rational linear combination of $\{b_1,\dots,b_{n}\}$. 
Define the height of $c_{\pi}$ as the least common multiple of the denominators of these rational coefficients in their reduced form and define the height of $\text{Crit}_{\triangle}(L_G)$ 
as least common multiple of the ratio of the denominators of the height of each $c_{\pi}$. A linear transformation $M(b_i)=b_i+H(\sum_{j<i}\alpha_jb_j$) is an automorphism of $L_G$ 
where $H$ is a multiple of the height of $\text{Crit}_{\triangle}(L_G)$ and $\alpha_i$s are arbitrary integers is an automorphism of $\text{Crit}_{\triangle}(L_G)/L_G$. 
These automorphisms takes an equivalence class $\text{Crit}_{\triangle}(L_G)/L_G$ to itself.

\end{enumerate}

The usefulness of the critical automorphism group arises from the following partial characterization of simplices (with centroid at the origin) for which duality holds: 
\begin{theorem}\label{dual_theo}
Let $G$ be an undirected connected graph, duality holds with respect to a simplex $S$ with centroid at the origin if
there is a linear map $M$ taking $\triangle$ to $S$ that  belongs to the automorphism group of $\text{Crit}_{\triangle}(L_G)$.
\end{theorem}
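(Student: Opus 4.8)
The plan is to obtain the theorem by transporting the generalised simplex duality (Theorem~\ref{simpdual_theo}) through the linear map $M$. The key point is that the hypothesis ``$M$ belongs to the automorphism group of $\text{Crit}_{\triangle}(L_G)$'' is exactly what is needed to make the critical set computed for the simplex $S$ coincide with $\text{Crit}_{\triangle}(L_G)$, which is the set appearing in Definition~\ref{dual_defi}.

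First I would record the structural facts. Since the vertices $t_0,\dots,t_n$ of $\triangle$ lie in $H_0$ and sum to zero, $\triangle$ is a full-dimensional simplex in $H_0$ with centroid at the origin, and therefore so is $S=M(\triangle)$, as $M$ is a non-singular linear map of $H_0$; hence Theorem~\ref{simpdual_theo} applies to the pair $(S,L_G)$. Being an element of the critical automorphism group, $M$ fixes both $L_G$ and $\text{Crit}_{\triangle}(L_G)$ as sets, as in ingredient~(I2); in particular $M(L_G)=L_G$. Now apply the (unlabelled) theorem on local maxima preceding Theorem~\ref{simpdual_theo}: the set of local maxima of $h_{M(\triangle),M(L_G)}$ is $M(\text{Crit}_{\triangle}(L_G))$; since $M(L_G)=L_G$ this set is by definition $\text{Crit}_{S}(L_G)$, and since $M$ fixes $\text{Crit}_{\triangle}(L_G)$ it is also $\text{Crit}_{\triangle}(L_G)$, so $\text{Crit}_{S}(L_G)=\text{Crit}_{\triangle}(L_G)$. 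Likewise Corollary~\ref{covradpre_lem}, with $M(L_G)=L_G$, identifies the covering radii appearing in Theorem~\ref{simpdual_theo} for $(S,L_G)$ with the corresponding covering radii of $L_G$ for $\triangle$ and $\bar{\triangle}$.

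It remains to read off the duality. Instantiated at $(S,L_G)$, the ``$\mathcal{B}$''-arrangement of Theorem~\ref{simpdual_theo} is $\cup_{c\in\text{Crit}_{\triangle}(L_G)}S(c,\cdot)$, which is the Crit-centred arrangement $\cup_{c\in\text{Crit}_{\triangle}(L_G)}\mathcal{P}(c,\cdot)$ of Definition~\ref{dual_defi} with $\mathcal{P}=S$, and the ``$\mathcal{A}$''-arrangement is $\cup_{q\in L_G}-S(q,\cdot)$. I would take $\mathcal{P}^{*}=-S=M(\bar{\triangle})$; this is a simplex with the origin (its centroid) in its interior, hence a convex, hence star-shaped, body, as Definition~\ref{dual_defi} requires. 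Given $R$ in the range of interest $0<R<Cov_{S}(L_G)$, set $R'=Cov_{S}(L_G)-R$ and apply Theorem~\ref{simpdual_theo} at parameter $t=R'$: the sets $\mathcal{A}=\cup_{q\in L_G}\mathcal{P}^{*}(q,R')$ and $\mathcal{B}=\cup_{c\in\text{Crit}_{\triangle}(L_G)}S(c,R)$ satisfy $\mathcal{A}\cup\mathcal{B}=H_0$ and $int(\mathcal{A})\cap\mathcal{B}=\emptyset$. Since $\mathcal{A}$ is a locally finite union of full-dimensional simplices it is regular closed, and a short point-set argument then upgrades these relations to $H_0\setminus\mathcal{A}=int(\mathcal{B})$; equivalently, $p\in int\bigl(\cup_{c\in\text{Crit}_{\triangle}(L_G)}S(c,R)\bigr)$ if and only if $p\notin\cup_{q\in L_G}\mathcal{P}^{*}(q,R')$, which is precisely the statement that $G$ has duality with respect to $S$ with dual body $\mathcal{P}^{*}=-S$.

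The only step carrying genuine content is the identification $\text{Crit}_{S}(L_G)=\text{Crit}_{\triangle}(L_G)$ above. On its own Theorem~\ref{simpdual_theo} only tiles $H_0$ by $S$-simplices centred at $\text{Crit}_{S}(L_G)$, a set a priori unrelated to $\text{Crit}_{\triangle}(L_G)$; it is exactly the membership of $M$ in the critical automorphism group that collapses the two, and once this is in hand everything else --- the geometry of $S$, the covering-radius bookkeeping via Corollary~\ref{covradpre_lem}, and the passage between closed arrangements and their interiors --- is routine.
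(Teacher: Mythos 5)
Your proposal is correct and follows essentially the same route as the paper, whose entire justification is the one sentence after the theorem: an automorphism $M$ of $\text{Crit}_{\triangle}(L_G)$ fixes both $L_G$ and $\text{Crit}_{\triangle}(L_G)$, so Theorem~\ref{simpdual_theo} applied to $S=M(\triangle)$ yields duality with $S^{*}=-M(\triangle)$. You merely make explicit the steps the paper leaves implicit --- the identification $\text{Crit}_{S}(L_G)=\text{Crit}_{\triangle}(L_G)$ via the local-maxima theorem, the covering-radius bookkeeping, and the point-set upgrade from the tiling statement to the if-and-only-if of Definition~\ref{dual_defi} --- which is a faithful elaboration rather than a different argument.
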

\begin{remark}
We do not know if the converse of Theorem \ref{dual_theo} also holds, the main difficulty in proving the converse is to show that if $S$ has duality then $\text{Crit}_{\triangle}(L_G)$ 
is the precisely the set of local maxima under the distance function $d_S$. 
\end{remark}

The fact that an automorphism $M$ of $\text{Crit}_{\triangle}(L_G)$ fixes $\text{Crit}_{\triangle}(L_G)$ 
combined with the Theorem \ref{simpdual_theo} means that the graph $G$ has a duality with respect to the simplex $M(\triangle)$
with the dual $(M(\triangle))^{*}$ being $-M(\triangle)$. 




\subsection{The case of complete graphs}


We will study the critical automorphism group of the complete graph. 
Let $b_0,\dots,b_{n}$ be the rows of the Laplacian matrix of the complete graph $K_{n+1}$.
The extremal points are permutations of the point $(-1,0,1,\dots,n)$ of the coordinates translated by a lattice
point i.e., a point in the Laplacian lattice of $K_{n+1}$. Since, the points in $\text{Crit}_{\triangle}(L_G)$
are orthogonal projections of the extremal points onto the hyperplane $H_0=(1,\dots,1)^{\perp}$, 
they are points of the form $c_{\pi}+q$ where $c_{\pi}=\pi((n/2,n/2-1,n/2-2,\dots,-n/2))$, 
where $\pi$ is a permutation matrix. Now, since we now that up to equivalence modulo $L_G$
there are only $n!$ class of $\text{Crit}_{\triangle}(L_G)$ and that permutations such that $\pi(n)=n$
form representatives of the set $\text{Crit}_{\triangle}(L_G)/L_G$. We now consider permutations 
such that $\pi(n)=n$ and for such a permutation, we can rewrite $c_{\pi}$ in the basis of $\{b_0,\dots,b_{n}\}$
as 

\begin{equation} \label{crit_eq}
c_{\pi}=\frac{(\sum_{j=0}^{n-1}(j+1) \cdot b_\pi(j))}{n}.
 \end{equation}

Hence, the height of $\text{Crit}_{\triangle}(K_{n+1})$ is $n$. We can use the exploit
Equation (\ref{crit_eq}) to obtain a better formula for rank. We will now construct critical automorphisms of $K_{n+1}$:

\begin{lemma}
Every map of the form $b_{i}=b_{\pi(i)}$ where $\pi$ is a permutation is an element of the critical automorphism group.
\end{lemma}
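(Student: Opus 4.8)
The claim is that any permutation map $M$ defined on the basis of the Laplacian lattice of $K_{n+1}$ by $M(b_i) = b_{\pi(i)}$ is a critical automorphism, i.e. it both induces a bijection on $L_{K_{n+1}}$ and a bijection on $\text{Crit}_{\triangle}(L_{K_{n+1}})$. The plan is to verify these two requirements separately, using the concrete descriptions already assembled in this subsection.

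First I would check that $M$ is an automorphism of the lattice $L_{K_{n+1}}$. Since $\{b_0,\dots,b_n\}$ is a spanning set of the Laplacian lattice (with the single relation $\sum_i b_i = 0$), permuting its elements is a bijection of the generating set, hence $M$ maps $L_{K_{n+1}}$ onto itself; moreover $M$ is the restriction to $H_0$ of the linear map on $\mathbb{R}^{n+1}$ that permutes coordinates according to $\pi$ (this is the content of the earlier remark that ``the permutation map induced by every automorphism of $H_0$ is an automorphism of $\text{Crit}_{\triangle}(L_G)$''), so it is non-singular on $H_0$. Concretely one observes that the Laplacian of $K_{n+1}$ is $(n+1)I - J$, which commutes with every coordinate permutation, so conjugating the rows by $\pi$ just reindexes them.

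Next I would check that $M$ permutes $\text{Crit}_{\triangle}(L_{K_{n+1}})$. By Equation~(\ref{crit_eq}), every element of $\text{Crit}_{\triangle}(L_{K_{n+1}})$ is, modulo $L_{K_{n+1}}$, of the form $c_\sigma = \frac{1}{n}\sum_{j=0}^{n-1}(j+1)\, b_{\sigma(j)}$ for a permutation $\sigma$ with $\sigma(n)=n$. Applying $M$ gives $M(c_\sigma) = \frac{1}{n}\sum_{j=0}^{n-1}(j+1)\, b_{\pi(\sigma(j))} = c_{\pi\circ\sigma}$, which is again of the same form (after, if necessary, adjusting by a lattice element so that the fixed index is $n$, using $\sum_i b_i = 0$). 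Since $\sigma \mapsto \pi\circ\sigma$ is a bijection of $S_{n+1}$, and hence of the set of $n!$ coset representatives, $M$ induces a bijection on $\text{Crit}_{\triangle}(L_{K_{n+1}})/L_{K_{n+1}}$; combined with the fact that $M$ fixes $L_{K_{n+1}}$, this shows $M$ induces a bijection on $\text{Crit}_{\triangle}(L_{K_{n+1}})$ itself. Therefore $M$ lies in the critical automorphism group.

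The main obstacle I anticipate is the bookkeeping in the last step: the description in Equation~(\ref{crit_eq}) is stated only for representatives with $\pi(n)=n$, whereas $M$ can move the index $n$, so one must be careful to re-normalize $c_{\pi\circ\sigma}$ back into the chosen system of representatives by subtracting a suitable lattice vector, and to argue that this re-normalization is exactly absorbed by the quotient $\text{Crit}_{\triangle}(L_{K_{n+1}})/L_{K_{n+1}}$. Once it is clear that $\text{Crit}_{\triangle}(L_{K_{n+1}})$ is genuinely the full orbit $\{c_\sigma + q : \sigma \in S_{n+1},\, q\in L_{K_{n+1}}\}$ (which follows from the extremal-point description recalled at the start of the subsection together with the projection to $H_0$), the permutation $M$ visibly permutes this orbit, and the proof concludes.
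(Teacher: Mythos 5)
Your argument is correct, and it matches the paper's treatment: the paper states this lemma without a separate proof and subsumes it under Theorem~\ref{compcrit_theo} (the case $q_i=0$), whose proof uses exactly your two steps --- checking that $M$ permutes a generating set of $L_{K_{n+1}}$ and then applying Equation~(\ref{crit_eq}) to get $M(c_\sigma)=c_{\pi\circ\sigma}$ modulo the lattice. Your extra observation that $M$ is the restriction to $H_0$ of the coordinate permutation $P_\pi$ (which fixes the Laplacian $(n+1)I-J$) is a clean way to dispose of both the non-singularity and the renormalization bookkeeping at once.
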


More generally, we have the following result:

\begin{theorem}\label{compcrit_theo}
A map $M$ that takes $b_{i}$ to $b_{\pi(i)}+H \cdot q_i$ for $i$ from $0$ to $n-1$ where $\pi$ is a permutation and $H$ is an integer that divides $n$,
 the height of $K_{n+1}$ and $q_i$ is contained in the sublattice spanned by $b_{\pi(0)},\dots,b_{\pi(i-1)}$ i.e., $q_i=\sum_{j=0}^{i-1}\alpha_{ij}b_{\pi(j)}$
where $\alpha_{ij}$s are integers is an element of the critical automorphism group of $K_{n+1}$.
\end{theorem}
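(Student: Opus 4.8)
The plan is to verify that $M$ is (i) an automorphism of the Laplacian lattice $L_{K_{n+1}}$ and (ii) fixes $\text{Crit}_{\triangle}(L_{K_{n+1}})/L_{K_{n+1}}$ as a subset of the torus $\mathbf{T}_{K_{n+1}}=H_0/L_{K_{n+1}}$; by the torus description of critical automorphisms given just before the statement, these two facts together show $M$ lies in the critical automorphism group. For (i), I would first dispose of the permutation part: the map $b_i \mapsto b_{\pi(i)}$ is a signed permutation matrix (in fact an honest permutation matrix on the basis), so it is visibly an automorphism of $L_{K_{n+1}}$, and this is the content of the preceding Lemma. For the general map, write $M = M_\pi \circ N$ where $M_\pi$ is the permutation automorphism and $N$ is the "unipotent" part $b_{\pi(i)} \mapsto b_{\pi(i)} + H q_i$ with $q_i \in \mathrm{span}_{\mathbb Z}(b_{\pi(0)},\dots,b_{\pi(i-1)})$. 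Relative to the ordered basis $b_{\pi(0)},\dots,b_{\pi(n-1)}$ (dropping $b_{\pi(n)} = b_n$ since the rows of $Q$ satisfy one linear relation, so $b_0,\dots,b_{n-1}$ is already a basis of $L_{K_{n+1}}$), the matrix of $N$ is lower-triangular with $1$'s on the diagonal and integer entries below, hence lies in $SL(n,\mathbb Z)$; by the concrete characterization of lattice automorphisms (the Lemma stating $M(\mathcal B) = T\cdot\mathcal B$ with $T\in SL(n,\mathbb Z)$), $N$ — and therefore $M$ — is an automorphism of $L_{K_{n+1}}$.

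For (ii) I would use the explicit formula \eqref{crit_eq}: the classes in $\text{Crit}_{\triangle}(L_{K_{n+1}})/L_{K_{n+1}}$ are represented by $c_\sigma = \frac{1}{n}\sum_{j=0}^{n-1}(j+1)\, b_{\sigma(j)}$ as $\sigma$ ranges over permutations fixing $n$. I would compute $M(c_\sigma)$ and show it is congruent mod $L_{K_{n+1}}$ to some $c_{\sigma'}$ with $\sigma'$ again fixing $n$. Applying $M$ termwise, $M(c_\sigma) = \frac{1}{n}\sum_{j=0}^{n-1}(j+1)\big(b_{\pi(\sigma(j))} + H\, q_{\sigma(j)}\big)$. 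The second group of terms is $\frac{H}{n}\sum_j (j+1) q_{\sigma(j)}$; since $H$ divides $n$ and each $q_{\sigma(j)}$ is an integer combination of basis vectors $b_{\pi(\cdot)}$, the coefficient $\frac{H(j+1)}{n}$ times an integer need not be an integer — so the key point to check is that the \emph{sum} of all these correction terms, together with the fractional parts coming from reindexing the first sum, lands in $L_{K_{n+1}}$. Concretely, $\frac{1}{n}\sum_j(j+1) b_{\pi(\sigma(j))}$ is exactly $c_{\pi\circ\sigma}$ up to the caveat that $\pi\circ\sigma$ may not fix $n$; one reduces it mod $L_{K_{n+1}}$ (using the lattice relation $\sum_i b_i = 0$ and the standard reduction used to pass from arbitrary permutations to those fixing $n$) to a genuine representative $c_{\sigma'}$. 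So the crux is the single congruence
\[
\frac{1}{n}\sum_{j=0}^{n-1}(j+1)\,b_{\pi(\sigma(j))} \;+\; \frac{H}{n}\sum_{j=0}^{n-1}(j+1)\,q_{\sigma(j)} \;\equiv\; c_{\sigma'} \pmod{L_{K_{n+1}}},
\]
which I would verify by expanding $q_{\sigma(j)} = \sum_{\ell < \sigma(j)}\alpha_{\sigma(j)\ell} b_{\pi(\ell)}$, collecting the coefficient of each $b_{\pi(\ell)}$, and checking it is an integer — here the divisibility $H \mid n$ is what makes $\frac{H}{n}\cdot(\text{integer})$ integral after the collection, while the lower-triangularity ($\ell < \sigma(j)$, i.e. $q_i$ only involves earlier basis vectors) is what prevents the corrections from interfering with the leading term and destroying the "fixing" property.

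The main obstacle I anticipate is precisely bookkeeping this last congruence cleanly — in particular making sure that after adding the $H q_i$ corrections and then reducing the resulting vector to the canonical representative (a $c_{\sigma'}$ with $\sigma'$ fixing $n$), one really gets back an element of $\text{Crit}_{\triangle}(L_{K_{n+1}})/L_{K_{n+1}}$ and that the assignment $\sigma \mapsto \sigma'$ is a bijection (so $M$ induces a bijection on the finite set of classes, not merely a self-map). I would establish bijectivity either by exhibiting the inverse transformation explicitly (it has the same shape, since the inverse of a lower-triangular unipotent integer matrix composed with a permutation is again of that form and $H \mid n$ is preserved) or by a counting argument: $M$ is injective on $\mathbf{T}_{K_{n+1}}$ because it is a lattice automorphism, and an injective self-map of a finite set is a bijection. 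The remaining verifications — that the diagonal-unipotent matrix is in $SL(n,\mathbb Z)$, that \eqref{crit_eq} holds, and that $H$ being the height $n$ (or a divisor of it) is exactly the condition making the fractional corrections integral — are routine given the earlier results.
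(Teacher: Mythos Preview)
Your approach is essentially the paper's: first establish that $M$ is a lattice automorphism (you use the decomposition $M=M_\pi\circ N$ with $N$ lower-triangular unipotent and the $SL(n,\mathbb Z)$ criterion, the paper instead inverts the system directly), then apply $M$ termwise to \eqref{crit_eq} and invoke the divisibility hypothesis on $H$ to see that the $Hq_{\sigma(j)}$ corrections land in $L_{K_{n+1}}$, so that $M(c_\sigma)\equiv c_{\pi\circ\sigma}\pmod{L_{K_{n+1}}}$ and $M$ permutes the critical classes. One simplification: your concern that $\pi\circ\sigma$ might fail to fix $n$ is moot, since $M$ is specified on the basis $b_0,\dots,b_{n-1}$ and hence $\pi$ is a permutation of $\{0,\dots,n-1\}$ (extended by $\pi(n)=n$), so $\pi\circ\sigma$ fixes $n$ automatically and no extra reduction step is needed.
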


\begin{proof}
First, we show that the map takes $L_{K_{n+1}}$ to itself as follows:
we write $b_{\pi(i)}$ for $i$ from $0$ to $n$ as an integral combination 
of $M(b_{\pi(0)}),\dots,M(b_{\pi(n)})$ as follows:
$b_{\pi(0)}=M(b_i)$ and for $i$ from $1$ to $n-1$ we have:\\

\begin{equation}
b_{\pi(i)}=M(b_i)-H \sum_{j=0}^{i-1} \sum_{k=0}^{i-1}\alpha_{jk} M(b_j)
\end{equation}

Hence, $M$ takes a basis of $L_{K_{n+1}}$ to another basis of $L_{K_{n+1}}$ and hence takes $L_{K_{n+1}}$ to itself.

Now, to show that $M$ takes $\text{Crit}_{\triangle}(L_{K_{n+1}})$ to itself we consider $M(c_{\pi})$
and form Equation (\ref{crit_eq}) we have:

$M(c_{\sigma})=\frac{(\sum_{j=0}^{n-1}(j+1) \cdot M(b_\pi(j)))}{n}=\frac{(\sum_{j=0}^{n-1} (j+1) \cdot b_{\pi(\sigma(j)}+H \cdot q_{\pi(j)}}{n})$.
Now since $H$ divides $n$ we  know that $M(c_{\sigma})$ is mapped to $c_{\sigma \circ \pi}+q'$ for some lattice point $q'$ and since $M$ takes $K_{n+1}$ to itself
it takes $M(c_{\sigma}+L_{K_{n+1}})$ to $c_{\sigma \circ \pi}+L_{K_{n+1}}$. Furthermore, 
since $\pi$ and $\sigma$ are permutations, $M$ induces a permutation on equivalence classes in $\text{Crit}_{\triangle}(L_{K_{n+1}})/L_{K_{n+1}}$ represented by $c_{\pi}$ with $\pi(n)=n$.
These arguments show $M$ that takes $\text{Crit}_{\triangle}(L_{K_{n+1}})$ to itself and hence $M$ is an element of the critical automorphism group of $K_{n+1}$. 
\end{proof}

\begin{remark}
Note that Theorem \ref{compcrit_theo} provides a construction of elements of the critical automorphism group of $K_{n+1}$, but the construction is not exhaustive in the sense,
there are other elements of the critical automorphism group of $K_{n+1}$ that are not constructed by Theorem \ref{compcrit_theo}. 
\end{remark}


\subsection{Concluding remarks}

A natural outgrowth of our work would be to obtain a singly exponential algorithm for computing the rank and furthermore, to resolve the complexity of computing the rank i.e., to determine 
whether it is NP-hard or it is computatble in polynomial time. The bottleneck in our algorithm is the enumeration of all permutations. 
 We believe that another interesting direction would be obtain duality theorems for polytopes generalising the results that we obtained on simplices.

{\bf Acknowledgements:} The author thanks Omid Amini, Khaled Elbasionni, Jan van den Heuvel and Amr Elmasary for the stimulating discussions they 
had with him on the topic of the paper.

}
\end{document}